\newtheorem{theorem}{Theorem}[section]
\newtheorem{lemma}[theorem]{Lemma}
\newtheorem{proposition}[theorem]{Proposition}
\newtheorem{definition}[theorem]{Definition}
\newtheorem{corollary}[theorem]{Corollary}
\newtheorem{remark}[theorem]{Remark}
\newtheorem{remarks}[theorem]{Remarks}
\newtheorem{notation}[theorem]{Notation}
\newtheorem{property}[theorem]{Property}
\newtheorem{properties}[theorem]{Properties}
\numberwithin{equation}{section}
\numberwithin{figure}{section}
\numberwithin{table}{section}
\definecolor{purple}{RGB}{127,0,255}
\definecolor{gray}{RGB}{128,128,128}
\definecolor{lgray}{gray}{0.90}
\newcommand{\noid}{\noindent $\diamond$~}
\newcommand{\C}{\mathbb{C}}
\newcommand{\Nb}{\mathbb{N^{\bullet}}}
\newcommand{\R}{\mathbb{R}}
\newcommand{\bS}{\mathbb{S}}
\newcommand{\Z}{\mathbb{Z}}
\newcommand{\cA}{\mathcal{A}}
\newcommand{\cC}{\mathcal{C}}
\newcommand{\cD}{\mathcal{D}}
\newcommand{\cE}{\mathcal{E}}
\newcommand{\cR}{\mathcal{R}}
\newcommand{\cZ}{\mathcal{Z}}
\newcommand{\lambdah}{\hat{\lambda}}
\newcommand{\sm}{\!\setminus\!}
\DeclareMathOperator{\id}{Id}
\DeclareMathOperator{\arccot}{arccot}
\newcommand{\set}[1]{\left\lbrace #1 \right\rbrace}
\newcommand{\pib}{\frac{\pi}{2}}
\newcommand{\pic}{\frac{\pi}{3}}
\newcommand{\pit}{\frac{2\pi}{3}}
\newcommand{\pid}{\frac{\pi}{4}}
\newcommand{\pif}{\frac{\pi}{6}}
\newcommand{\vo}[1]{\hphantom{#1}}
\newcommand{\pf}{\emph{Proof.~}}
\title[Courant-sharp property]{Courant-sharp property for Dirichlet eigenfunctions  on the M\"{o}bius strip}
\date{\today}
\author[P. B\'{e}rard]{Pierre B\'erard}
\author[B. Helffer]{Bernard Helffer}
\author[R. Kiwan]{Rola Kiwan}
\address{PB: Universit\'{e} Grenoble Alpes and CNRS\\
Institut Fourier, CS 40700\\ 38058 Grenoble cedex 9, France.}
\email{pierrehberard@gmail.com}
\address{BH: Laboratoire Jean Leray, Universit\'{e} de Nantes and CNRS\\
F44322 Nantes Cedex, France and LMO (Universit\'e Paris-Sud).}
\email{Bernard.Helffer@univ-nantes.fr}
\address{RK: American University in Dubai, P.O.Box 28282, Dubai, United Arab Emirates.}
\email{rkiwan@aud.edu}
\keywords{Spectral theory, Courant theorem, Laplacian, Nodal sets, M\"{o}bius strip.}
\subjclass[2010]{58C40, 49Q10.}
\date{\today ~~To appear in \emph{Portugaliae Mathematica}~(\currfilename)}
\begin{document}

\begin{abstract}{The question of determining for which eigenvalues there exists an eigenfunction which has the same number of nodal domains as the label of the associated eigenvalue (Courant-sharp property) was motivated by the analysis of minimal spectral partitions. In previous works, many examples have been analyzed corresponding to squares, rectangles, disks, triangles, tori, \ldots . A natural toy model for further investigations is the M\"{o}bius strip, a non-orientable surface with Euler characteristic $0$, and particularly the ``square'' M\"{o}bius strip whose eigenvalues have higher multiplicities. In this case, we prove that the only Courant-sharp Dirichlet eigenvalues are the first and the second, and we exhibit peculiar nodal patterns.}
\end{abstract}

\maketitle

\section{Introduction}\label{S-int}

The question we are interested in was initially suggested by \'{E}tienne Ghys in March 2016, during a conference in Abu Dhabi.\medskip

We start with the standard strip,
$$
S_\infty:= (0,\pi) \times (-\infty,+\infty)\,,
$$
and we look at the standard Laplacian $-\Delta$ with Dirichlet condition at $x=0$ and  $ x=\pi$, and we add the conditions,
\begin{equation}\label{cond1}
 u(x, y + \pi a) = u (\pi -x,y)\,,\, u\in H^2_{loc}(\overline{S_\infty}) \,,
\end{equation}
where $a$ is a positive parameter. Equivalently, we look at the Laplacian on the flat M\"{o}bius strip $M_a$, with Dirichlet boundary condition (see Section~\ref{S-moeb}).

According to Courant's nodal domain theorem \cite[Chap. {VI}.6]{CH1953}, an eigenfunction associated with the $n$th Dirichlet eigenvalue of $M_a$ has at most $n$ nodal domains. The eigenvalue $\lambda_n$ is called \emph{Courant-sharp} if there exists an associated eigenfunction with exactly $n$ nodal domains. As usual, we list the eigenvalues in nondecreasing order, multiplicities accounted for, starting from the label $1$.

\begin{remark}\label{R-int-2} There are obvious restrictions for an eigenvalue to be Courant-sharp. Indeed, let $k \ge 2$ be an integer. If the eigenvalue $\lambda_k$ satisfies $\lambda_{k-1} < \lambda_k = \cdots = \lambda_{k+\ell-1} < \lambda_{k+\ell}$ for some integer $\ell \ge 2$, then the eigenvalues $\lambda_{k+1},\ldots,\lambda_{k+\ell-1}$ cannot be Courant-sharp.
\end{remark}%

Our aim is to prove the following theorem.

\begin{theorem}\label{main}
When $a=1$, the only Courant-sharp eigenvalues of the Dirichlet Laplacian on the M\"{o}bius strip $M_1$ are the first and second eigenvalues.
\end{theorem}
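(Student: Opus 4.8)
The plan is to follow the now-standard route for Courant-sharp problems: compute the spectrum explicitly, use a Pleijel-type bound to reduce to finitely many eigenvalues, and then analyse the nodal sets of the remaining candidates by hand. First I would determine the spectrum by passing to the orientation double cover, the flat cylinder $C=(0,\pi)\times(\R/2\pi\Z)$ with Dirichlet conditions at $x=0,\pi$: the Dirichlet eigenfunctions of $M_1$ are exactly the eigenfunctions of $C$ invariant under the glide reflection $\sigma(x,y)=(\pi-x,\,y+\pi)$. Separation of variables on $C$ gives $\sin(mx)e^{iny}$ with eigenvalue $m^2+n^2$ ($m\ge 1$, $n\in\Z$), and since $u\circ\sigma=(-1)^{m+n+1}u$, invariance holds iff $m+n$ is odd. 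Hence the eigenvalues are the numbers $m^2+n^2$ with $m\ge 1$, $n\in\Z$, $m+n$ odd, with eigenfunctions $\sin(mx)\cos(ny)$ and $\sin(mx)\sin(ny)$. Listing them I obtain $\lambda_1=1$ (simple, $\sin x$) and $\lambda_2=\dots=\lambda_5=5$, realized by $(m,n)=(1,\pm2),(2,\pm1)$; the high multiplicities flagged in the abstract already appear here and recur (e.g. at $\lambda=25$) through the sum-of-two-squares structure. Since $|M_1|=\pi^2$, the counting function obeys $N(\lambda)\sim\tfrac14\pi\lambda$.

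Next, the small eigenvalues. The eigenvalue $\lambda_1$ is Courant-sharp because $\sin x>0$ has a single nodal domain, and $\lambda_2$ is Courant-sharp because $\sin 2x\cos y$ has exactly two: on the square model its nodal set is $\{x=\pi/2\}\cup\{y=\pi/2\}$, and the glide identification $(x,\pi)\sim(\pi-x,0)$ merges the four rectangles in pairs (top-left with bottom-right, top-right with bottom-left), leaving two sign domains. By Remark~\ref{R-int-2} applied with $k=2,\ \ell=4$, the eigenvalues $\lambda_3,\lambda_4,\lambda_5$ are automatically not Courant-sharp, so the entire block $\lambda=5$ is settled. It remains to treat the labels $\ge 6$.

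To handle all but finitely many of these I would use a Pleijel-type bound, and this is where I expect the main obstacle to lie. The naive Faber--Krahn inequality $|D|\ge \pi j_0^2/\lambda$ \emph{fails} for nodal domains that wrap around the strip: it already fails for $M_1$ itself at $\lambda_1$, since $\pi j_0^2/|M_1|=j_0^2/\pi>1=\lambda_1$. One must therefore separate the disk-type nodal domains, to which planar Faber--Krahn applies, from the topologically non-trivial ones, whose number is controlled by the Euler characteristic $\chi(M_1)=0$ via Euler's formula for the nodal graph. This should yield an explicit bound $\mu(u)\le \tfrac{\pi}{j_0^2}\lambda+C$ for the number of nodal domains at energy $\lambda$; combined with the elementary lower bound $N^-(\lambda)+1$ for the smallest label carrying $\lambda$, an explicit lattice-point lower bound for $N^-$, and the inequality $\tfrac14\pi>\tfrac{\pi}{j_0^2}$, it shows that no label above an explicit threshold can be Courant-sharp.

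Finally, for the finitely many distinct eigenvalues below the threshold I would argue eigenvalue by eigenvalue. By Remark~\ref{R-int-2} only the smallest label of each multiplicity block can be Courant-sharp, and Courant-sharpness there requires exactly that many nodal domains; for a simple eigenvalue $\lambda=m^2$ ($m$ odd) the unique eigenfunction $\sin(mx)$ has only $(m+1)/2$ nodal domains (the glide merges the outer strips in pairs), far fewer than its label once $m\ge3$. The remaining work, and the computational heart of the argument, is to show for each multiple eigenvalue in the finite range that the maximal number of nodal domains over the whole eigenspace stays strictly below the required label. Because the eigenspaces are multi-dimensional this means analysing the nodal patterns of the families $\sin(mx)(\alpha\cos ny+\beta\sin ny)+\dots$ as the coefficients vary, including the degenerate parameter values where nodal arcs cross and reconnect; the non-orientable gluing is precisely what produces the peculiar nodal patterns advertised in the abstract and makes this last step delicate.
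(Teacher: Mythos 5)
Your overall architecture matches the paper's (spectrum on the cylinder cover with the invariance condition $m+n$ odd, an explicit Weyl-type lower bound for the counting function, a Faber--Krahn filter, then hand analysis of the surviving candidates), and your treatment of $\lambda_1$, $\lambda_2$, the multiplicity blocks, and the simple eigenvalues $m^2$ agrees with Proposition~\ref{P-esmn-4}. However, your proposed mechanism for the Pleijel/Faber--Krahn step contains a genuine error: the number of topologically non-trivial nodal domains on $M_1$ is \emph{not} controlled by $\chi(M_1)=0$. An Euler-characteristic count only bounds the number of domains with $\chi<0$; domains with $\chi=0$ (annuli and M\"{o}bius bands) can be arbitrarily numerous. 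Indeed, the eigenfunction $\sin(mx)$, $m$ odd, has $(m+1)/2$ nodal domains, \emph{every one} of which wraps around the strip --- one M\"{o}bius band around the soul and $(m-1)/2$ doubly twisted annuli (see Figure~\ref{F-fnp-sin}) --- so your dichotomy ``disk-type: planar Faber--Krahn; non-disk: finitely many by Euler's formula'' does not close. The paper sidesteps topology entirely: by Howards--Hutchings--Morgan's isoperimetric theorem on the flat Klein bottle (into which $M_1$ embeds as an open set after deleting one line $x=\mathrm{const}$), \emph{every} open set $\Omega \subset M_1$ with $\cA(\Omega)\le\pi$ satisfies $\lambda_1(\Omega)\,\cA(\Omega)\ge \pi j_{0,1}^2$ regardless of wrapping (Proposition~\ref{Faber}); combined with the pigeonhole remark that $\nu(u)\ge 4$ forces some nodal domain to have area at most $\pi^2/4<\pi$ (Lemma~\ref{faber-Moebius}), this yields $k\le \pi\lambda_k/j_{0,1}^2$ for Courant-sharp $\lambda_k$ with $k\ge 4$. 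Note also that your diagnosis that Faber--Krahn ``fails for nodal domains that wrap around the strip'' conflates topology with area: the failure you exhibit at $\lambda_1$ occurs because $\cA(M_1)=\pi^2>\pi$, and small-area wrapping annuli in fact satisfy the inequality comfortably.

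The second gap is that the decisive case is left undone. After the explicit counting bound \eqref{counting} and the quadratic criterion \eqref{sharp-quadratic} (negative for $\sqrt{\lambda}\ge 8$, hence $\lambda_k<64$), the only candidate surviving the Faber--Krahn filter besides $\lambda_1,\lambda_2$ is $\lambda_7=13$, and the whole theorem reduces to proving that every eigenfunction in the four-dimensional eigenspace $\cE_{[2,3]}$ has at most six nodal domains (Lemma~\ref{L-es23-2}). You correctly flag this as the computational heart but defer it entirely; in the paper it occupies Section~\ref{S-es23}: reduction to the two-parameter family $\Phi_{\beta,\theta}$ via Proposition~\ref{P-esmn-2}, the classification of interior and boundary critical zeros (Proposition~\ref{P-es23-4}), the checkerboard prohibition argument based on $P_\beta$ together with the domain-monotonicity Lemma~\ref{L-es23-4}, and the study of the auxiliary functions $f(\beta,y)$, $g(\beta,y)$ with the bifurcation at $\theta_\beta$, yielding the counts $6$, $4$, $3$ in the respective regimes. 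Without this analysis (or a substitute for it), the proposal establishes the reduction but not the theorem.
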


Equivalently, for the Laplacian with Dirichlet condition on the boundary of $S_{\infty}$ and the periodicity condition \eqref{cond1}, with $a=1$, the only Courant-sharp eigenvalues are the first and second ones.\medskip

Here is a sketch of the proof. Since the M\"{o}bius strip is a surface with boundary (actually a quotient of a cylinder), we can apply \cite[p.~524]{BM} to extend Pleijel's theorem \cite{Pl}, and conclude that there exists an explicit number $\gamma(2) < 1$ such that
$$
\limsup_{k \longrightarrow +  \infty}  \frac{\nu(k)}{k}\leq \gamma(2) <1\,,
$$
where $\nu(k)$ denotes the maximal number of nodal domains of an eigenfunction associated with the eigenvalue $\lambda_k$. This inequality implies that there are finitely many Courant-sharp eigenvalues only. Using Weyl's law with a controlled remainder term, and an adapted Faber-Krahn inequality, we give an upper bound and a condition to be satisfied by Courant-sharp eigenvalues. Together with Remark~\ref{R-int-2}, these conditions limit the possible Courant-sharp eigenvalues of $M_1$ to the set $\set{\lambda_1,\lambda_2,\lambda_6,\lambda_7}$. Courant's theorem implies that $\lambda_1$ and $\lambda_2$ are Courant-sharp. To conclude the proof, we determine the possible nodal patterns of the eigenfunctions associated with $\lambda_6$ and $\lambda_7$.\medskip

Because of its symmetries, and higher eigenvalue multiplicities, the case of the ``square'' M\"{o}bius strip $M_1$ ($a=1$) seems to be the most interesting. Other investigations corresponding to irrational or small $a$'s could also be performed as  in \cite{HHOT}, leading to partial answers to the Courant-sharp question. In this paper, we shall only consider the case $a=1$. \medskip

Looking for the Courant-sharp eigenvalues of the flat M\"{o}bius strip comes naturally in view of the known results for the square and for the flat tori. It is also natural to consider cylinders and Klein bottles. The following table displays some of the known results.

\begin{table}[!htb]
\centering
\resizebox{0.98\textwidth}{!}{%
\begin{tabular}[c]{|c|c|c|c|}%
\hline &&&\\[2pt]
Case & Boundry Cond. & Courant-sharp eigenvalues & References\\[7pt]
\hline
Square & Dirichlet & $\set{\lambda_1,\lambda_2,\lambda_4}$ & \cite{Pl,BH}\\[5pt]
\hline
Square & Neumann & $\set{\lambda_1,\lambda_2,\lambda_4,\lambda_5,\lambda_9}$ & \cite{HePeSu2015}\\[5pt]
\hline
Square & Robin & $\set{\lambda_1,\lambda_2,\lambda_4}$ & \cite{GiHe2019} ($h$ large, Remarks~\ref{R-T})\\[5pt]
\hline
Square & Robin & $\set{\lambda_1,\lambda_2,\lambda_4,\lambda_9}$ & \cite{GiHe2020} ($h > 0$ small, Remarks~\ref{R-T})\\[5pt]
\hline
Torus & -- & $\set{\lambda_1,\lambda_2}$ & \cite{BeHe2016,Le} (resp. for the flat equilateral or square tori)\\[5pt]
\hline
Triangle & Dirichlet & $\set{\lambda_1,\lambda_2,\lambda_4}$ & \cite{BeHe2016} (equilateral triangle)\\[5pt]
\hline
Disk & Dirichlet/Neumann & $\set{\lambda_1,\lambda_2,\lambda_4}$ & \cite{HHOT,HePeSu2016}\\[5pt]
\hline
\hline
M\"{o}bius strip & Dirichlet & $\set{\lambda_1,\lambda_2}$ & The present paper\\[5pt]
\hline
Cylinder & Dirichlet & $\set{\lambda_1,\lambda_2}$ & \cite{BeHeKi2020} (See Remarks~\ref{R-T})\\[5pt]
\hline
Klein bottle& -- & $\set{\lambda_1,\lambda_2}$ & \cite{BeHeKi2020} (See Remarks~\ref{R-T}) \\[5pt]
\hline
\end{tabular}}
\medskip
\caption{}\label{T-int}
\end{table}%

\begin{remarks}\label{R-T}~
\begin{enumerate}
\item Other tori are treated in \cite{HHO3}, \cite{BoLe}.
   \item The Robin boundary condition is written $\frac{\partial u}{\partial \nu} + h u = 0$, where $h$ is the Robin parameter, and $\nu$ the outward-pointing normal.
   \item The cylinders considered in the table are $(0,\pi) \times \bS^1_r$, $r \in \set{\frac 12, 1}.$
   \item The flat Klein bottles considered in the table have fundamental domains $(0,\pi)\times (0,c\pi)$, $c \in \set{1,2}$, with sides $\set{x=0}$ and $\set{x=\pi}$ identified, while sides $\set{y=0}$ and $\set{y=c\pi}$ are identified with reversed orientations.
   \item The case of spheres appears in \cite{Ley1996,HHOT2010}.
   \item In the case of Neumann or Robin boundary condition, determining Courant-sharp eigenvalues is more difficult because one can only apply the Faber-Krahn inequality (see Section~\ref{S-upper}) to nodal domains which do not meet the boundary.
 \end{enumerate}
\end{remarks}%

The paper is organized as follows. In Section~\ref{S-moeb}, we describe the M\"{o}bius strip, and compute its spectrum using separation of variables. Sections~\ref{S-esmn}--\ref{S-es23} are devoted to the description of the nodal patterns for the first eigenspaces, $\cE_{\lambda_2}$, $\cE_{\lambda_6}$, and $\cE_{\lambda_7}$. In Section~\ref{S-isop}, we give a Weyl law with a controlled remainder term, and consider isoperimetric and Faber-Krahn inequalities for the M\"{o}bius strip. In Section~\ref{S-upper}, we give an upper bound, together with a condition \`{a} la Faber-Krahn, to be satisfied by Courant-sharp eigenvalues. This section also contains the proof of Theorem~\ref{main}. In Section~\ref{S-eulerm}, we consider an Euler type formula for nodal patterns on the M\"{o}bius strip. In Section~\ref{S-stern}, we give examples of high energy eigenfunctions of $M_1$ with only two nodal domains.\medskip

\textbf{Acknowledgement.} The second author would like to thank C. L\'ena for useful discussions. The authors would also like to thank the two anonymous referees for their constructive comments, and for pointing out several misprints.

\section{The M\"{o}bius strip $M_a$}\label{S-moeb}

\subsection{Presentation and geometry}\label{SS-mop}

Let $S_{\infty} := (0,\pi) \times (-\infty , \infty)$, be the infinite strip with width $\pi$, equipped with the flat metric $dx^2+dy^2$ of $\R^2$. Given $a > 0$, define the following isometries of $S_{\infty}$:
\begin{equation}\label{E-mop-2}
\left\{
\begin{array}{l}
\sigma_a : (x,y) \mapsto (\pi - x,y + a\, \pi)\,,\\[5pt]
\varpi : (x,y) \mapsto (\pi - x,y)\,,\\[5pt]
\tau_t : (x,y) \mapsto (x,y+t), \text{~for~} t \in \R\,.\\[5pt]
\end{array}
\right.
\end{equation}

Define the groups
\begin{equation}\label{E-mop-7}
\left\{
\begin{array}{l}
G := \set{\sigma_a^k ~|~ k \in \Z}\,,\\[5pt]
G_2 := \set{\sigma_a^k ~|~ k \in 2\,\Z}\,.
\end{array}%
\right.
\end{equation}
The group $G_2$ is a subgroup of $G$, of index $2$, generated by $ \sigma_a^2$.\medskip

The action of $G$ on $S_{\infty}$ is smooth, isometric, totally discontinuous, without fixed points. By \cite[Section~2.4]{BeGo1988}, we can consider the quotient manifolds with boundary
\begin{equation}\label{E-mop-CM}
C_a := S_{\infty}/G_{2} \text{~~and~~}M_a := S_{\infty}/G\,,
\end{equation}
 equipped with the flat metric induced from the metric of $S_{\infty}$. \smallskip

The cylinder $C_a$ is the product manifold $(0,\pi) \times \bS_a^1$, where $\bS^1_a$ is the circle $\R/(2\pi a\Z)$. One can view $C_a$ as the rectangle $(0,\pi)\times [0,2\pi a]$ with the sides $(0,\pi) \times \set{0}$ and $(0,\pi) \times \set{2a\pi}$ identified, $(x,0) \sim (x,2a\pi)$. This rectangle is a fundamental domain for the action of $G_2$ on $S_{\infty}$. \smallskip

The isometry $\sigma_a$ induces an isometry of $C_a$, whose square is the identity. The M\"{o}bius strip $M_a$ can also be obtained as the quotient $C_a/\set{\id,\sigma_a}$. One can view $M_a$ as the rectangle $\cR_a := (0,\pi)\times [0,a\pi]$, with the sides $(0,\pi) \times \set{0}$ and $(0,\pi) \times \set{a\pi}$ identified by $(x,0) \sim (\pi - x,a\pi)$. The rectangle $\cR_a$ is a fundamental domain of the action of $G$ on $S_{\infty}$.\smallskip

In the sequel, we will mainly view $M_a$ as $\cR_a$, together with the identification $(x,0) \sim (\pi - x,a\pi)$. \medskip

The isometry $\varpi$ of $S_{\infty}$ induces an isometry of $C_a$ and $M_a$. The action $t \mapsto \tau_t$ induces an isometric action of $\bS_a^1$ on $C_a$ and $M_a$.\medskip

Physically, one can realize $M_a$ by making a paper model: a rectangular sheet of paper, $[0,\pi]\times [0,a\pi]$, is twisted by a rotation of angle $\pi$ about its symmetry axis $\set{x=\pib}$, and the two horizontal sides $\set{y=0}$ and $\set{y=a\pi}$ are glued together.\medskip

The fixed point set of $\varpi$ in $M_a$ is a circle (the ``soul'') of length $a\pi$, while the boundary $\partial M_a$ has length $2a\pi$.\medskip

The cylinder $\cC_a$ embeds isometrically in $\R^3$ by
\begin{equation}\label{E-mop-8}
(0,\pi)\times \bS_a^1 \ni (u,v) \mapsto \left( u, a\, \cos(\frac{v}{a}), a \, \sin(\frac{v}{a}) \right) \in \R^3 \,,
\end{equation}
and we can view the cylinder as a collection of segments $(0,\pi)_a$ attached to the circle $\set{y^2+z^2 = 1} \cap \set{x=0}$, orthogonally to the plane $\set{x=0}$.\medskip

Define the map $F$ by
\begin{equation}\label{E-mop-10}
(w,v) \mapsto \left( w \cos\frac{v}{a}, (R + w \sin\frac{v}{a}) \cos\frac{2v}{a} , (R + w \sin\frac{v}{a}) \sin\frac{2v}{a} \right),
\end{equation}
where $w = u - \frac{\pi}{2}$, $(w,v) \in \left( - \frac{\pi}{2}, \frac{\pi}{2}\right) \times [0,a\pi]$, and $R > 0$. It is easy to see that $F$ is a diffeomorphism from $M_a$ onto $F(M_a) \subset \R^3$ provided that $R > \frac{\pi}{2}$. \medskip

One can view the surface $F(M_a)$ as the collection, $$\set{F\left(
(-\frac{\pi}{2}, \frac{\pi}{2}),\frac{2v}{a}\right) ~\big|~ v \in [0,a\pi]},$$  of segments centered at the point $\left(0,R\cos\frac{2v}{a} ,R\sin\frac{2v}{a}\right)$, contained in the plane $xO\omega_a$, where $\omega_a = \left(0,\cos\frac{2v}{a},\sin\frac{2v}{a}\right)$, and making in this plane an angle $\frac{v}{a}$ with the axis $Ox$.

\begin{remark}\label{R-mop-2}
We will use the map $F$ to visualize the \emph{topology} of the nodal sets and nodal domains of the eigenfunctions of $M_1$ in three dimensions. Note that the map $F$ is not an isometric embedding. As a matter of fact, it is not even conformal, but the vectors $\partial_uF$ and $\partial_vF$ are orthogonal. The surface $M_a$, with the metric $F^{*}(dx^2+dy^2+dz^2)$ induced from the canonical metric in $\R^3$, has negative curvature. According to \cite[\S~15]{HaWe1977} and \cite[Lecture~14]{FuTa2007}, if the M\"{o}bius strip $M_a$ can be isometrically embedded into $\R^3$, then $a > \pib$, and such embeddings actually exist for $a > \sqrt{3}$.
\end{remark}%

\begin{figure}[!ht]
\centering
\includegraphics[scale=0.65]{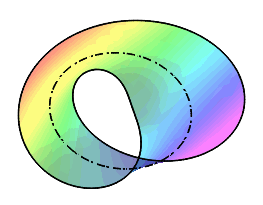}\vspace{-3mm}
\caption{The M\"{o}bius strip embedded in $\R^3$, with boundary and soul}\label{F-moeb-2}
\end{figure}
\FloatBarrier

\subsection{Dirichlet spectrum}\label{SS-mos}

We equip the M\"{o}bius strip, $M_a = S_{\infty}/G$, with the flat metric inherited from $S_{\infty}$, and consider the Dirichlet eigenvalue problem for the associated Laplace-Beltrami operator. The projection $C_a \to M_a$ being a Riemannian covering, the Dirichlet eigenfunctions of $M_a$ can be identified with the Dirichlet eigenfunctions of $C_a$ which are invariant under $\sigma_a$. Since $C_a$ is a product manifold, we can use separation of variables. A complete family of complex eigenfunctions of $C_a$ is given by
\begin{equation}\label{E-mps-2}
\sin(mx) \, \exp(in\frac{y}{a}), m \in \Nb, n \in \Z\,,
\end{equation}
with associated eigenvalues $\lambdah(m,n,a) = m^2 + \frac{n^2}{a^2}$. Here, $\Nb := \set{n \in \Z ~|~ n \ge 1}$. The eigenspace $\cE_{\lambdah(m,n,a)}$, associated with the eigenvalue $\lambdah(m,n,a)$, consists of eigenfunctions of the form
\begin{equation}\label{E-mps-4}
\Phi(x,y) = \sum \alpha_{p,q} \, \sin(px) \, \exp(i q \frac{y}{a})
\end{equation}
where $\alpha_{p,q} \in \C$, and the sum extends over the pairs $(p,q) \in \Nb \times \Z$ such that $\lambdah(p,q,a) = \lambdah(m,n,a)$. Since
\begin{equation*}
\Phi \circ \sigma_a(x,y) = \Phi(\pi - x, y+\pi a) = \sum (-1)^{p+q+1}\, \alpha_{p,q} \, \sin(px) \, \exp(i q \frac{y}{a})\,,
\end{equation*}
it follows that $\Phi \circ \sigma_a = \Phi$ if and only if the summation in \eqref{E-mps-4} extends over the pairs $(p,q) \in \Nb \times \Z$ such that $\lambdah(p,q,a) = \lambdah(m,n,a)$, with the additional condition that  $p+q$ is \emph{odd}. As a consequence, we have the following result.

\begin{lemma}\label{L-mps-2}
A complete family of real Dirichlet eigenfunctions of the M\"{o}bius strip $M_a$, equipped with the flat metric, is
\begin{equation}\label{E-mps-6}
\left\{
\begin{array}{l}
\sin(mx), ~m \in \Nb  \text{~odd, and~}\\[5pt]
\sin(mx) \, \cos(n\frac{y}{a}), ~\sin(mx) \, \sin(n\frac{y}{a}), ~m,n \in \Nb, ~m+n \text{~odd},
\end{array}
\right.
\end{equation}
with associated eigenvalues $\lambdah(m,n,a) = m^2 + \frac{n^2}{a^2}$.
\end{lemma}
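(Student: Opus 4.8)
The plan is to read off the real family directly from the characterization of the complex eigenspaces obtained just above. The discussion preceding the statement shows that a function $\Phi$ of the form \eqref{E-mps-4} satisfies $\Phi \circ \sigma_a = \Phi$, i.e.\ descends to $M_a$, precisely when its nonzero coefficients $\alpha_{p,q}$ are supported on pairs $(p,q)$ with $p+q$ odd. Equivalently, the complex Dirichlet eigenfunctions of $M_a$ are spanned by the functions $\sin(mx)\exp(iny/a)$ with $m \in \Nb$, $n \in \Z$, and $m+n$ odd. The task is then purely to convert this complex spanning set into a real one, eigenspace by eigenspace.

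Since $-\Delta$ is a real operator and the Dirichlet condition is real, the real and imaginary parts of a complex eigenfunction are again eigenfunctions for the same eigenvalue, so I would split the above family according to the sign of $n$. For $n = 0$ the function $\sin(mx)\exp(0)=\sin(mx)$ is already real, and the constraint $m+n$ odd becomes simply ``$m$ odd'', giving the first line of \eqref{E-mps-6}. For $n \neq 0$ the crucial observation is that the parity of $m+n$ is preserved under $n \mapsto -n$, so whenever $\sin(mx)\exp(iny/a)$ is admissible so is $\sin(mx)\exp(-iny/a)$; forming the two real combinations $\tfrac12\big(\exp(iny/a)+\exp(-iny/a)\big)$ and $\tfrac{1}{2i}\big(\exp(iny/a)-\exp(-iny/a)\big)$ then yields $\sin(mx)\cos(ny/a)$ and $\sin(mx)\sin(ny/a)$. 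Restricting the index to $n \in \Nb$ removes the $\pm n$ redundancy and produces exactly the second line of \eqref{E-mps-6}, with eigenvalue $\lambdah(m,n,a)=m^2+\tfrac{n^2}{a^2}$ in all cases.

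To complete the argument I would verify completeness and, if desired, the claim that the listed functions span each eigenspace $\cE_{\lambdah(m,n,a)}$. Completeness is immediate, because on each two-dimensional block $\{\exp(\pm iny/a)\}$ the passage to $\{\cos(ny/a),\sin(ny/a)\}$ is an invertible $\C$-linear change of basis, so the real and imaginary parts of the complex spanning family span the same real vector space of eigenfunctions. As a sanity check, independent of the complex computation, one can also verify directly that $\big(\sin(mx)\cos(ny/a)\big)\circ\sigma_a = (-1)^{m+n+1}\sin(mx)\cos(ny/a)$, and similarly for the sine factor, so that $\sigma_a$-invariance is again equivalent to $m+n$ odd. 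I do not expect a serious obstacle here: the only points requiring care are the bookkeeping of the parity condition under $n\mapsto -n$ and the separate handling of the $n=0$ case, both of which are elementary.
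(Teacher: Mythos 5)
Your proposal is correct and follows essentially the same route as the paper, which derives the lemma as an immediate consequence of the preceding computation showing that $\sigma_a$-invariance of \eqref{E-mps-4} is equivalent to the parity condition $p+q$ odd. The only difference is that you spell out the routine passage from the complex family $\sin(mx)\exp(iny/a)$ to the real combinations (including the $n=0$ case and the $n\mapsto -n$ bookkeeping), details the paper leaves implicit.
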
%

\begin{definition}\label{D-mps-2}
Let $\Phi$ be a Dirichlet eigenfunction of $- \Delta$ on $M_a$. The \emph{nodal set} of $\Phi$ is defined as
\begin{equation}\label{E-mps-ns}
\cZ(\Phi) = \overline{\set{x \in M_a ~|~ \Phi(x)=0}}\,,
\end{equation}
i.e., as the closure in $M_a \cup \partial M_a$ of the set of (interior) zeros of $\Phi$. The \emph{nodal domains} of $\Phi$ are the connected components of $M_a \sm \cZ(\Phi)$.
\end{definition}%

\subsection{Dirichlet spectrum of the M\"{o}bius strip, case $a=1$}\label{SS-mos1}

In the case $a=1$, the Dirichlet eigenvalues of $M_1$ have higher multiplicities. Let $\lambdah(m,n)$ denote $\lambdah(m,n,1)$.

\begin{notation}\label{N-mos-0}
In the sequel, the Dirichlet eigenvalues of the M\"{o}bius strip $M_1$ are denoted,
\begin{equation*}
\lambda_1 < \lambda_2 \le \lambda_3 \le \cdots \,,
\end{equation*}
and listed in nondecreasing order, with multiplicities, starting from the label $1$.
\end{notation}%

The first Dirichlet eigenvalues of $M_1$ are given in the following table, with eigenfunctions given by \eqref{E-mps-6}.

\begin{table}[!htb]
\centering
\resizebox{0.7\textwidth}{!}{%
\begin{tabular}[c]{|c|c|c|c|}%
\hline &&&\\[2pt]
Eigenvalue & $(m,n)$ & $\lambdah(m,n)$ & Multiplicity\\[7pt]
\hline
$\lambda_1$ & $(1,0)$ & $1$ & $1$ \\[5pt]
\hline
$\lambda_2, \lambda_3, \lambda_4, \lambda_5$ & $(1,2), (2,1)$ & $5$ & $4$ \\[5pt]
\hline
$\lambda_6$ & $(3,0)$ & $9$ & $1$ \\[5pt]
\hline
$\lambda_7, \lambda_8, \lambda_9, \lambda_{10}$ & $(2,3), (3,2)$ & $13$ & $4$ \\[5pt]
\hline
$\lambda_{11},\lambda_{12}, \lambda_{13}, \lambda_{14}$ & $(1,4), (4,1)$ & $17$ & $4$ \\[5pt]
\hline
$\lambda_{15}, \lambda_{16}, \lambda_{17}, \lambda_{18}, \lambda_{19}$ & $(3,4), (4,3), (5,0)$ & $25$ & $5$ \\[5pt]
\hline
$\lambda_{20}, \lambda_{21}, \lambda_{22}, \lambda_{23}$ & $(2,5), (5,2)$ & $29$ & $4$ \\[5pt]
\hline
$\lambda_{24}, \lambda_{25}, \lambda_{26}, \lambda_{27}$ & $(1,6), (6,1)$ & $37$ & $4$ \\[5pt]
\hline
$\lambda_{28}, \lambda_{29}, \lambda_{30}, \lambda_{31}$ & $(4,5), (5,4)$ & $41$ & $4$ \\[5pt]
\hline
$\lambda_{32}, \lambda_{33}, \lambda_{34}, \lambda_{35}$ & $(3,6), (6,3)$ & $45$ & $4$ \\[5pt]
\hline
$\lambda_{36}$ & $(7,0)$ & $49$ & $1$ \\[5pt]
\hline
$\lambda_{37}, \lambda_{38}, \lambda_{39}, \lambda_{40}$ & $(2,7), (7,2)$ & $53$ & $4$ \\[5pt]
\hline
$\lambda_{41}, \lambda_{42}, \lambda_{43}, \lambda_{44}$ & $(5,6), (6,5)$ & $61$ & $4$ \\[5pt]
\hline
$\lambda_{45}, \ldots, \lambda_{52}$ & $(1,8), (8,1), (4,7), (7,4)$ & $65$ & $8$\\[5pt]
\hline
\end{tabular}}
\medskip
\caption{First Dirichlet eigenvalues of $M_1$}\label{T-moeb-2}
\end{table}%

\section{Analysis of the first eigenspaces}\label{S-esmn}

When $m, n \in \Nb$ (with $m+n$ odd), the eigenvalue $\lambdah(m,n)=m^2+n^2$ has multiplicity at least $4$. In this case, the corresponding eigenspace, denoted $\cE_{\lambdah(m,n)}$, contains the subspace $\cE_{[m,n]}$ consisting of eigenfunctions of the form
\begin{equation}\label{E-esmn-2}
\left\{
\begin{array}{ll}
\Phi(x,y) = & \sin(mx)\, \left(A \, \cos(ny) + B \, \sin(ny) \right) \\[5pt]
&  \hspace{0.5cm}+ \sin(nx) \left( C \, \cos(my) + D \,  \sin(my)\right).
\end{array}%
\right.
\end{equation}
Note that $\cE_{\lambda_2} = \cE_{[1,2]}$ and $\cE_{\lambda_7} = \cE_{[2,3]}$, while $\cE_{\lambda_{15}} = \cE_{[3,4]} \oplus \cE_{[5,0]}$, see Table~\ref{T-moeb-2}. Since we are interested in nodal sets, we may assume, without loss of generality, that $\Phi$ is \emph{normalized} by $A^2+B^2+C^2+D^2 = 1$. The function in the first parenthesis on the right hand side of \eqref{E-esmn-2} can be written as $\sqrt{A^2 + B^2} \, \sin(ny+\alpha)$, and the function in the second parenthesis as $\sqrt{C^2+D^2}\, \sin(my+\beta)$, for some $\alpha,\beta \in (-\pi,\pi]$. Defining $\theta \in [0,\pib]$ by,
\begin{equation*} 
\left\{
\begin{array}{l}
\cos \theta = \sqrt{A^2 + B^2}\,,\\[5pt]
\sin \theta = \sqrt{C^2 + D^2}\,,
\end{array}
\right.
\end{equation*}
we write $\Phi$ as,
\begin{equation}\label{E-esmn-8}
\begin{array}{ll}
\Phi_{\beta,\theta}(x,y) = & \cos\theta \, \sin(mx) \, \sin(ny+\alpha)\\[5pt]
& \hspace{0.5cm} + \sin\theta \, \sin(nx) \, \sin(my+\beta)\,.
\end{array}%
\end{equation}
Using the isometric $\bS^1$ action $\tau_t : (x,y) \mapsto (x,y+t)$ on $M_1$, we can assume that $\alpha=0$. We also observe that
\begin{equation*}
\begin{array}{ll}
\Phi_{\beta+\pi,\theta}(x,y) = & (-1)^n\, \cos\theta\, \sin(mx)\, \sin(n(y+\pi)) \, +\\[5pt]
& ~~ (-1)^{m+1}\, \sin\theta\, \sin(nx)\, \sin(m(y+\pi)+\beta)\,,
\end{array}%
\end{equation*}
so that
\begin{equation}\label{E-esmn-10}
\Phi_{\beta+\pi,\theta}(x,y) = (-1)^n\, \Phi_{\beta,\theta}(x,y+\pi) =
(-1)^n \, \Phi_{\beta,\theta}(\pi - x,y)\,.
\end{equation}
It follows that the nodal sets of $\Phi_{\beta+\pi,\theta}$ and $\Phi_{\beta,\theta}$ are symmetric with respect to $\set{x=\pib}$ in $\cR_1$ or $M_1$.

\begin{proposition}\label{P-esmn-2}
To determine the nodal patterns of the Dirichlet eigenfunctions $\Phi \in \cE_{[m,n]}$, for $m,n \in \Nb$ with $m+n$ is odd, it is sufficient to study the nodal properties of the family $\Phi_{\beta,\theta}$,
\begin{equation*}
\Phi_{\beta,\theta}(x,y) = \cos \theta \, \sin(mx) \, \sin(ny) + \sin \theta \, \sin(nx) \, \sin(my+\beta)\,,
\end{equation*}
for $\beta \in (0,\pi]$, and $\theta \in [0,\pib]$.\smallskip

When $[m,n]=[1,2]$ or $[2,3]$, one can reduce the parameter set further.
\begin{enumerate}
  \item For $[m,n]=[1,2]$, it suffices to consider $(\beta,\theta) \in (0,\pib] \times [0,\pib]$. Indeed,
\begin{equation}\label{E-esmn-es12}
\left\{
\begin{array}{l}
\Phi_{\beta+\pib,\theta} = - \, \Phi_{\beta,\theta} \circ \tau_{-\pib}\,,\\[5pt]
\cZ(\Phi_{\beta+\pib,\theta}) = \tau_{\pib} \cZ(\Phi_{\beta,\theta})\,.
\end{array}
\right.
\end{equation}
  \item For $[m,n]=[2,3]$, it suffices to consider $(\beta,\theta) \in (0,\pic] \times [0,\pib]$. Indeed,
\begin{equation}\label{E-esmn-es23}
\left\{
\begin{array}{l}
\Phi_{\beta+\pic,\theta} = - \, \Phi_{\beta,\theta} \circ \tau_{-\pic}\,,\\[5pt]
\cZ(\Phi_{\beta+\pic,\theta}) = \tau_{\pic} \cZ(\Phi_{\beta,\theta})\,.
\end{array}
\right.
\end{equation}
\end{enumerate}
\end{proposition}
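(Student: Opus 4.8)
The plan is to base everything on the elementary observation that the nodal pattern of a function on $M_1$ is unchanged if the function is multiplied by a nonzero constant or composed with an isometry; the only isometries needed are the rotations $\tau_t$ of the $\bS^1$-action, which descend to $M_1$ and carry nodal sets to nodal sets via $\cZ(f\circ\tau_{-t})=\tau_t\,\cZ(f)$. The general reduction to $\Phi_{\beta,\theta}$ with $\beta\in(0,\pi]$ and $\theta\in[0,\pib]$ is essentially contained in the discussion preceding the statement: expressing a normalized $\Phi\in\cE_{[m,n]}$ in the form \eqref{E-esmn-8} already forces $\theta\in[0,\pib]$; a suitable $\tau_t$ normalizes $\alpha$ to $0$, leaving $\beta\in(-\pi,\pi]$; and \eqref{E-esmn-10} shows that $\Phi_{\beta+\pi,\theta}$ and $\Phi_{\beta,\theta}$ have nodal sets exchanged by $\tau_\pi$ (equivalently, symmetric with respect to $\set{x=\pib}$ under $\varpi$). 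Thus $\beta$ and $\beta+\pi$ give congruent nodal patterns, and $(0,\pi]$ is a fundamental domain inside $(-\pi,\pi]$ for this identification, which proves the first assertion.

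For the two refinements I would prove the displayed identities \eqref{E-esmn-es12} and \eqref{E-esmn-es23} by a single direct computation. Writing $\Phi_{\beta,\theta}\circ\tau_{-t}(x,y)=\cos\theta\,\sin(mx)\,\sin(ny-nt)+\sin\theta\,\sin(nx)\,\sin(my+\beta-mt)$, I would take $t=\pi/n$. The point is that $nt=\pi$ turns the first factor into $\sin(ny-\pi)=-\sin(ny)$, a pure sign, so no spurious phase $\alpha\neq0$ is reintroduced in the first term; meanwhile $\sin(my+\beta-m\pi/n)$ combined with an overall factor $-1$ becomes $\sin\!\big(my+\beta+\pi(n-m)/n\big)$ via $-\sin\phi=\sin(\phi+\pi)$. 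This yields $\Phi_{\beta+\pi(n-m)/n,\theta}=-\,\Phi_{\beta,\theta}\circ\tau_{-\pi/n}$, an element of the same family with the same $\theta$. Specializing $[m,n]=[1,2]$ gives shift $\pib$ with $t=\pib$, and $[m,n]=[2,3]$ gives shift $\pic$ with $t=\pic$, i.e. exactly the first lines of \eqref{E-esmn-es12} and \eqref{E-esmn-es23}.

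The second line of each identity then follows formally, since the sign $-1$ does not affect zero sets and $\cZ(f\circ\tau_{-t})=\tau_t\,\cZ(f)$, giving $\cZ(\Phi_{\beta+\pib,\theta})=\tau_{\pib}\,\cZ(\Phi_{\beta,\theta})$ and $\cZ(\Phi_{\beta+\pic,\theta})=\tau_{\pic}\,\cZ(\Phi_{\beta,\theta})$. Interpreted geometrically, $\beta$ and $\beta+\pib$ (resp. $\beta+\pic$) produce nodal patterns congruent under $\tau_{\pib}$ (resp. $\tau_{\pic}$); since a fundamental domain of the shift by $\pib$ (resp. $\pic$) inside $(-\pi,\pi]$ is $(0,\pib]$ (resp. $(0,\pic]$), the range of $\beta$ may be cut down accordingly while $\theta$ remains in $[0,\pib]$, which is the content of statements (1) and (2).

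I do not expect a genuine obstacle: the argument is phase bookkeeping resting on the invariance of nodal patterns under nonzero scaling and the isometries $\tau_t$. The only points demanding care are verifying, in the computation of $\Phi_{\beta,\theta}\circ\tau_{-t}$, that the choice $t=\pi/n$ simultaneously makes the first term change only by a sign and makes the second term's phase shift compatible with the global factor $-1$ (so that the result is again of the form $\Phi_{\beta',\theta}$ with the same $\theta$ and $\alpha=0$), and checking that the resulting shifts are indeed $\pib$ and $\pic$ and that $(0,\pib]$ and $(0,\pic]$ are genuine fundamental domains of these shifts. Once these are confirmed the reductions in (1) and (2) are immediate.
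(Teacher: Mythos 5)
Your proposal is correct and follows essentially the same route as the paper: the general reduction is exactly the normalization argument preceding the proposition (the form \eqref{E-esmn-8} with $\theta\in[0,\pib]$, the $\bS^1$-action killing $\alpha$, and \eqref{E-esmn-10} identifying $\beta$ with $\beta+\pi$ up to the mirror symmetry $\varpi$), while the identities \eqref{E-esmn-es12} and \eqref{E-esmn-es23} are the same direct phase computations the paper leaves implicit. Your uniform identity $\Phi_{\beta+\pi(n-m)/n,\theta}=-\,\Phi_{\beta,\theta}\circ\tau_{-\pi/n}$ is a nice one-line packaging that specializes correctly to both cases, but it is not a different method.
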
%

For the analysis of nodal sets, we need the following definition.

\begin{definition}\label{D-esmn-2}
A point $(x,y)$ is a \emph{critical zero} of a function $\Phi$ if $\Phi(x,y)=0$ and $d_{(x,y)}\Phi=0$. A critical zero has \emph{order} $k \in \Nb$ if the function $\Phi$ and its derivatives of order less than or equal to $(k-1)$ vanish at $(x,y)$, and a least one derivative of order $k$ does not. A point $(x,y)$ such that $\Phi(x,y)=0$ and $d_{(x,y)}\Phi \not=0$ is called \emph{regular}.
\end{definition}%

For eigenfunctions in dimension $2$, the critical zeros are isolated, and their orders determine the structure of the nodal set locally. In the case of the M\"{o}bius strip $M_1$, the eigenfunctions are defined globally on $\R^2$, and one defines a boundary critical zero as a critical zero of (the extended function) $\Phi$, which lies on the boundary $\partial M_1$.

As a first step to Theorem~\ref{main}, we prove the following result.

\begin{proposition}\label{P-esmn-4}
For the M\"{o}bius strip $M_1$, the Dirichlet eigenvalues $\lambda_1$ and $\lambda_2$ are Courant-sharp. The eigenvalues $\lambda_3, \ldots, \lambda_{10}$ are not Courant-sharp.
\end{proposition}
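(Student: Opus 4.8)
The plan is to split $\lambda_1,\dots,\lambda_{10}$ into four groups, treated by increasingly delicate arguments. The first group is the two Courant-sharp eigenvalues. For $\lambda_1$ the eigenspace is spanned by $\sin x$, which is positive on $M_1$ and has a single nodal domain, so $\lambda_1$ is Courant-sharp. For $\lambda_2$, recalling that $\cE_{\lambda_2}=\cE_{[1,2]}$, it suffices to exhibit one eigenfunction with exactly two nodal domains, and I would take $\sin x\,\sin(2y)$: its nodal set in $\cR_1$ is the segment $\{y=\pib\}$ together with the edges $\{y=0\}$ and $\{y=\pi\}$ (all nodal), and since these edges are glued to one another, the two sign regions $\{0<y<\pib\}$ and $\{\pib<y<\pi\}$ remain separated under the identification $(x,0)\sim(\pi-x,\pi)$. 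This gives exactly two nodal domains, matching Courant's bound, so $\lambda_2$ is Courant-sharp.

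The second group, $\lambda_3,\lambda_4,\lambda_5$ and $\lambda_8,\lambda_9,\lambda_{10}$, is disposed of by Remark~\ref{R-int-2}. From Table~\ref{T-moeb-2} one has $\lambda_1<\lambda_2=\lambda_3=\lambda_4=\lambda_5<\lambda_6$ and $\lambda_6<\lambda_7=\lambda_8=\lambda_9=\lambda_{10}<\lambda_{11}$; since each of these six eigenvalues lies strictly inside a multiplicity-four cluster, Remark~\ref{R-int-2} shows at once that none of them is Courant-sharp.

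The third case is the simple eigenvalue $\lambda_6$, whose eigenspace is spanned by $\sin(3x)$ and which Remark~\ref{R-int-2} cannot reach. Here I would stress that the twisted gluing $(x,0)\sim(\pi-x,\pi)$ is \emph{not} performed along a nodal line, since $\sin(3x)\not\equiv 0$ on $\{y=0\}$: it identifies the two outer strips $\{0<x<\pic\}$ and $\{\pit<x<\pi\}$, both of sign $+$, into a single domain, while gluing the middle strip $\{\pic<x<\pit\}$ to itself. Hence $\sin(3x)$ has exactly two nodal domains; as every eigenfunction of $\lambda_6$ is a multiple of it and $2<6$, the eigenvalue $\lambda_6$ is not Courant-sharp.

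The last and hardest case is $\lambda_7$. By Proposition~\ref{P-esmn-2}(2) it is enough to bound the number of nodal domains of the family $\Phi_{\beta,\theta}=\cos\theta\,\sin(2x)\sin(3y)+\sin\theta\,\sin(3x)\sin(2y+\beta)$ over $(\beta,\theta)\in(0,\pic]\times[0,\pib]$ and to show it never reaches $7$. First I would treat the two ``product'' slices $\theta=0$ and $\theta=\pib$, where the nodal set is an explicit rectangular grid and a direct count after the twisted gluing gives exactly six domains in each case. For the interior of the parameter rectangle, the plan is to locate the critical zeros of $\Phi_{\beta,\theta}$ (the crossing points of nodal arcs), which occur only along explicit transition curves, and to use them to stratify $(0,\pic]\times[0,\pib]$ into finitely many cells on which the nodal topology is constant; on each cell the count can then be read off directly or from the Euler-type formula for $M_1$ in Section~\ref{S-eulerm}. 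The hard part will be precisely this bookkeeping on the non-orientable surface: one must control how same-sign sectors merge or split when a crossing is resolved across the orientation-reversing gluing, and check that neither these resolutions nor any crossing newly formed at intermediate parameters ever pushes the count to seven. The expected conclusion is that the maximum is six, so no eigenfunction of $\lambda_7$ has seven nodal domains and $\lambda_7$ is not Courant-sharp.
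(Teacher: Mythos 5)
Your treatment of $\lambda_1$, $\lambda_2$, the two multiplicity-four clusters, and $\lambda_6$ is correct and essentially the paper's route: the paper disposes of $\lambda_1,\lambda_2$ in one line by Courant's theorem (your explicit witness $\sin x\,\sin(2y)$, with both edges $\set{y=0}$ and $\set{y=\pi}$ nodal, is a fine alternative), handles $\lambda_3,\dots,\lambda_5$ and $\lambda_8,\dots,\lambda_{10}$ exactly as you do via Remark~\ref{R-int-2}, and for $\lambda_6$ asserts that $\sin(3x)$ has two nodal domains on $M_1$ -- your gluing computation, showing the two outer strips merge under $(x,0)\sim(\pi-x,\pi)$ while the middle strip closes on itself, correctly fleshes out that claim.

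The genuine gap is $\lambda_7$, which is the only hard step and to which the paper devotes all of Section~\ref{S-es23}; your proposal for it is a program, not a proof. You assert that the critical zeros ``occur only along explicit transition curves'' and that the parameter rectangle can be stratified into cells of constant nodal topology on which ``the count can be read off'' -- but these assertions are precisely what must be established. Concretely, two ingredients are missing. First, the classification of critical zeros: the paper proves (Proposition~\ref{P-es23-4}, via the functions $f(\beta,y)$ and $g(\beta,y)$ of Subsection~\ref{SS-es23-f-g}) that for $\theta\in(0,\pib)$ there are \emph{no} interior critical zeros unless $\beta\in\set{0,\pic}$, and that boundary critical zeros have order in $\set{2,3,4}$, with the order-$3$ bifurcation occurring at a unique $\theta_\beta$; without this, ``constant topology on each cell'' is unsupported. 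Second, and more importantly, even away from bifurcations one must rule out nodal components your stratification would never see -- closed nodal curves, or arcs returning to the boundary inside a single region. The paper does this with the checkerboard argument (Proposition~\ref{P-es23-6}: the nodal set is trapped in $\set{P_\beta\le 0}$ and pinned at the common zeros $\cZ_\beta$) combined with the Faber--Krahn/domain-monotonicity argument of Lemma~\ref{L-es23-4}, which shows no nodal domain of $\Phi_{\beta,\theta}$ can sit strictly inside a nodal domain of $P_\beta$, since its ground-state energy would then exceed $\lambdah(2,3)=13$. Your appeal to the Euler-type formula of Section~\ref{S-eulerm} cannot substitute for this: Property~\ref{P-eulerm-2} requires as input exactly the census of critical zeros and boundary components you have not yet pinned down, and its proof is moreover deferred to another paper. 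Note also that the actual counts in the interior strata are $3$ (for $\beta\in(0,\pic)$) and $4$ (for $\beta\in\set{0,\pic}$), with the maximum $6$ attained only on the slices $\theta\in\set{0,\pib}$ you did compute; so your expected conclusion is right, but as it stands the key bound ``at most six'' (Lemma~\ref{L-es23-2}) is conjectured rather than proved.
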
%

\pf The first assertion is a direct consequence of Courant's nodal domain theorem \cite[Chap.~{VI}.6]{CH1953}.\smallskip

According to Remark~\ref{R-int-2} and Table~\ref{T-moeb-2},  the eigenvalue $\lambda_2 = \lambdah(1,2)$ has multiplicity $4$, so that $\lambda_3, \ldots, \lambda_5$ cannot be Courant-sharp. Since $\lambda_6 = \lambdah(3,0)$ is simple, and since the associated eigenfunction $\sin(3x)$ has $2$ nodal domains in $M_1$, the eigenvalue $\lambda_6$ is not Courant-sharp. The eigenvalue $\lambda_7 = \lambdah(2,3)$ has multiplicity $4$. This implies that $\lambda_8, \ldots, \lambda_{10}$ cannot be Courant-sharp.\smallskip

To finish the proof of Proposition~\ref{P-esmn-4}, it suffices to prove that $\lambda_7$ is not Courant-sharp.  This is the purpose of Lemma~\ref{L-es23-2} in Section~\ref{S-es23}; the proof is by analyzing the possible nodal patterns of eigenfunctions in $\cE_{\lambda_7} = \cE_{[2,3]}$. \hfill \qed \medskip

Although we already know that $\lambda_2$ is Courant-sharp, it is interesting to describe the possible nodal patterns of associated eigenfunctions. They indeed present some peculiar properties when compared to second Dirichlet eigenfunctions of a simply-connected domain. In Section~\ref{S-es12}, we give the possible nodal patterns of eigenfunctions in $\cE_{\lambda_2} = \cE_{[1,2]}$, up to isometries given by the $\bS^1$ action, and to the symmetry with respect to $\set{x=\pib}$. \medskip

In Section~\ref{S-es23}, we give the possible nodal patterns of eigenfunctions in $\cE_{\lambdah(2,3)}$, up to isometries given by the $\bS^1$ action, and to the symmetry with respect to $\set{x=\pib}$. As a consequence, we shall conclude that an eigenfunction in $\cE_{\lambda_7}$ has at most $6$ nodal domains, so that $\lambda_7$ is not Courant-sharp.

\section{Analysis of the eigenspace $\cE_{\lambda_2}$}\label{S-es12}

In this section, we describe the nodal patterns of the eigenfunctions $\Phi_{\beta,\theta} \in \cE_{[1,2]} = \cE_{\lambda_2}$, with $\beta \in (0,\pib]$ and $\theta \in [0,\pib]$.  According to Proposition~\ref{P-esmn-2},
this is sufficient to determine all the possible nodal patterns, up to isometries. We consider the following cases and subcases.
\begin{enumerate}
  \item $\theta=0$ and $\theta=\pib$.\smallskip
  \item $\beta=\pib$, three subcases: $0 < \theta < \pid$, $\theta=\pid$ and $\pid < \theta < \pib$.\smallskip
  \item $\beta \in (0,\pib)$, three subcases $0 < \theta < \theta_{\beta}$, $\theta=\theta_{\beta}$ and $\theta_{\beta} < \theta < \pib$\\
      (the function $\beta \mapsto \theta_{\beta}$ is described below).
\end{enumerate}

The analysis of these cases can be done using the methods described in Section~\ref{S-es23}. For this reason, we shall not give full details here, but a mere description. \medskip

The figures below display the nodal lines in the fundamental domain $\cR_1$; recall that the M\"{o}bius strip $M_1$ is obtained by identifying the lines $\set{y=0}$ and $\set{y=\pi}$ via $(x,y) \sim (\pi-x,y+\pi)$. The nodal lines appear in \emph{red}, the Dirichlet boundary in \emph{blue}. When the lines $\set{y=0}$ and $\set{y=\pi}$ are not nodal, they appear as \emph{dashed black lines} to indicate the M\"{o}bius identification.

\subsection{Case $\theta=0$ or $\theta=\pib$}\label{SS-es12-sc1-2}

In this case, the eigenfunctions are decomposed and the nodal sets explicit, see Figure~\ref{F-fig12-sc1}. When $\theta=0$, Figure~(A), there are two disjoint nodal lines (in $M_1$), hitting the boundary at critical zeros of order $2$, and no interior critical zero. When $\theta=\pib$, Figure~(B) \& (C), there are two nodal lines which intersect at an interior critical zero of order $2$, and hit the boundary at critical zeros of order $2$. As expected, in each case, there are two nodal domains.

\begin{figure}[h]
\centering
\begin{subfigure}[t]{.30\textwidth}
\centering
\includegraphics[width=\linewidth]{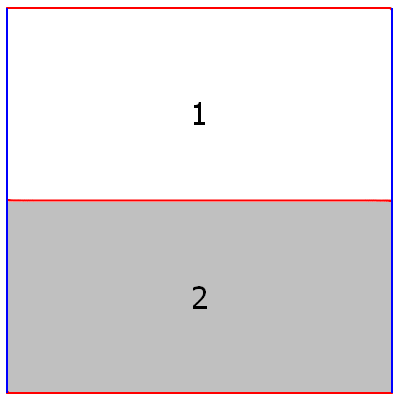}
\caption{$\Phi_{\beta,0}$}
\end{subfigure}
\begin{subfigure}[t]{.30\textwidth}
\centering
\includegraphics[width=\linewidth]{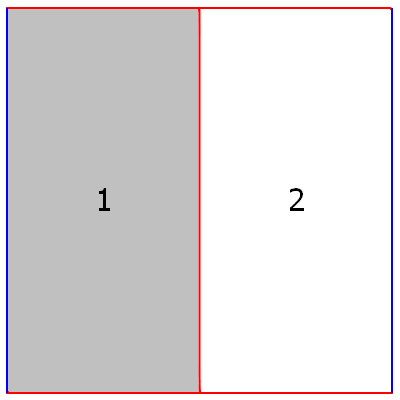}
\caption{$\Phi_{0,\pib}$}
\end{subfigure}
\begin{subfigure}[t]{.30\textwidth}
\centering
\includegraphics[width=\linewidth]{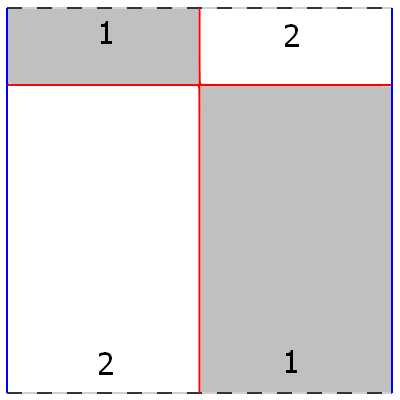}
\caption{$\Phi_{\frac{\pi}{5},\pib}$}
\end{subfigure}
\caption{Nodal sets $\cZ(\Phi_{\beta,0})$, $\cZ(\Phi_{0,\pib})$, and $\cZ(\Phi_{\frac{\pi}{5},\pib})$}\label{F-fig12-sc1}
\end{figure}
\FloatBarrier

The M\"{o}bius strip $M_1$ is not simply-connected (it retracts onto the soul circle). Figures~\ref{F-fig12-sc1}~(B) and (C) display second Dirichlet eigenfunctions with an interior critical zero, and a nodal set which contains a closed curve, the arc $\set{x=\pib}$. We refer to Figure~\ref{F-fig12-sc2-aW} for a 3D-picture. The existence of interior critical zeros is a consequence of the multiplicity of $\lambda_2$ which is $4$ for $M_1$. For domains $\Omega \subset \R^2$, P\'{o}lya's nodal line conjecture states that a second Dirichlet eigenfunction cannot have a closed nodal line. The conjecture has been proved for convex domains by A.~Melas (smooth convex domains) and G.~Alessandrini (general convex domains). A consequence of the conjecture is the non-existence of interior critical zeros. A counter-example to the conjecture has been constructed by M.~Hoffmann-Ostenhof, T.~Hoffmann-Ostenhof and N.~Nadirashvili, with a domain $\Omega$ not simply connected. We refer to \cite{HO2Na1997} and the recent paper \cite{Ken2018} for more details and references on the nodal line conjecture.

\subsection{Case $\beta=\pib$ and $0 < \theta < \pib$}\label{SS-es12-sc4}

In this case, a bifurcation occurs at $\theta = \pid$. The different patterns are illustrated in Figure~\ref{F-fig12-sc4}. For $\theta = \pid$, Figure~(B), the line $\set{y=\pib}$ is contained in the nodal set $\cZ(\Phi_{\pib,\pid})$. It hits the boundary part $\set{x=0}$ at a critical zero of order $2$, and the boundary part $\set{x=\pi}$ at a critical zero of order $4$. There is another nodal component hitting the boundary at this point, a closed curve. When $0 < \theta < \pid$, Figure~(A), the nodal set consists in two disjoint simple curves with end points critical zeros of order $2$ on the boundary. No interior critical zeros for both patterns. When $\pid < \theta < \pib$, Figure~(C), the nodal set consists in two simple curves which intersect at an interior critical zero of order $2$; one curve is closed, the other hits the boundary at two critical zeros of order $2$. As expected, there are two nodal domains in each cases. We refer to Figure~\ref{F-fig12-sc4-bW} for a 3D-picture.

\begin{figure}[ht]
\centering
\begin{subfigure}[t]{.30\textwidth}
\centering
\includegraphics[width=\linewidth]{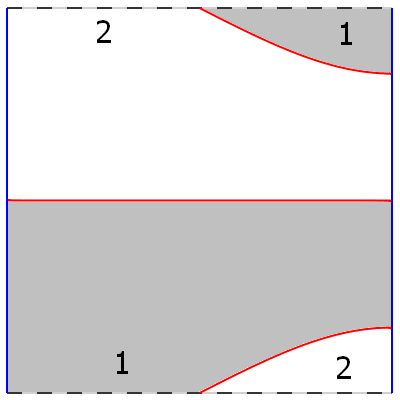}
\caption{$0 < \theta < \pid$}
\end{subfigure}
\begin{subfigure}[t]{.30\textwidth}
\centering
\includegraphics[width=\linewidth]{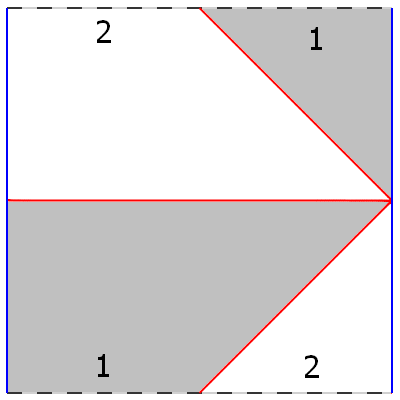}
\caption{$\theta = \pid$}
\end{subfigure}
\begin{subfigure}[t]{.30\textwidth}
\centering
\includegraphics[width=\linewidth]{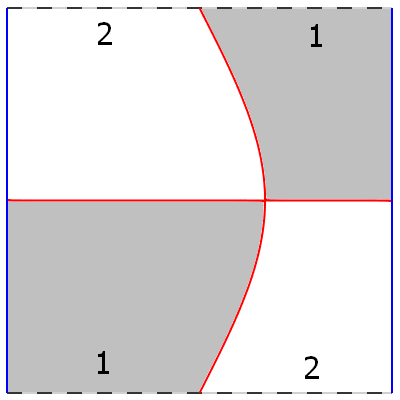}
\caption{$\pid < \theta < \pib$}
\end{subfigure}
\caption{Nodal sets $\cZ(\Phi_{\pib,\theta})$, for $0 < \theta < \pib$}\label{F-fig12-sc4}
\end{figure}
\FloatBarrier

\subsection{Case $\beta=0$ and $0 < \theta < \pib$}\label{SS-es12-sc3}

This case is similar to the preceding one. Indeed, recall that the nodal sets $\cZ(\Phi_{0,\theta})$ and $\cZ(\Phi_{\pib,\theta})$ are isometric. We give the corresponding pictures for completeness. \medskip

\begin{figure}[h]
\centering
\begin{subfigure}[t]{.30\textwidth}
\centering
\includegraphics[width=\linewidth]{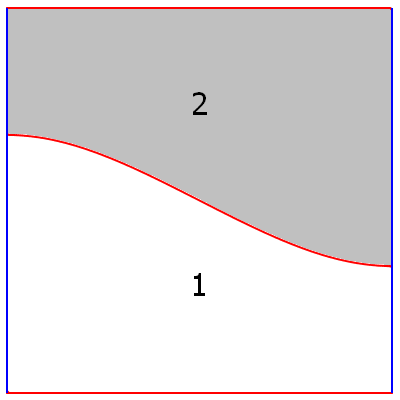}
\caption{$0 < \theta < \pid$}
\end{subfigure}
\begin{subfigure}[t]{.30\textwidth}
\centering
\includegraphics[width=\linewidth]{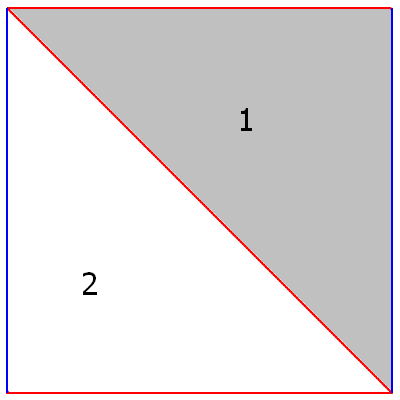}
\caption{$\theta = \pid$}
\end{subfigure}
\begin{subfigure}[t]{.30\textwidth}
\centering
\includegraphics[width=\linewidth]{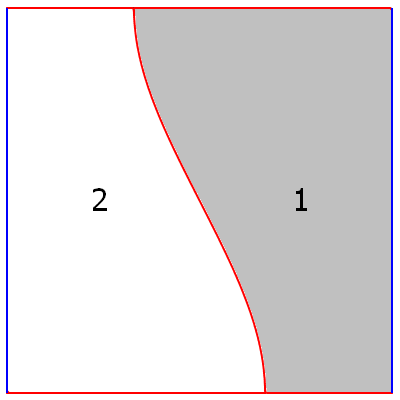}
\caption{$\pid < \theta < \pib$}
\end{subfigure}
\caption{Nodal sets $\cZ(\Phi_{0,\theta})$, for $0 < \theta < \pib$}\label{F-fig12-sc3}
\end{figure}

\FloatBarrier

\subsection{Case $0 < \beta < \pib$ and $0 < \theta < \pib$}\label{SS-es12-sc6-gc}

This case presents some novelty. Indeed, given $\beta \in (0,\pib)$, there exists a unique $\theta_{\beta} \in (0,\pib)$ such that the nodal sets of the family $\{\Phi_{\beta,\theta}\}$, for fixed $\beta$ and $0 < \theta < \pib$, present a bifurcation at $\theta = \theta_{\beta}$. More precisely, given $\beta \in (0,\pib)$, there exist a unique $y_{\beta} \in (0,\pib)$, given by $\cot^3(y_{\beta}) = \cot(\beta)$, and a unique value $\theta_{\beta} \in (0,\pib)$, given by $\cot(\theta_{\beta}) = 2 \, \frac{\sin(y_{\beta}+\beta)}{\sin(2y_{\beta})}$, such that the following description holds. \medskip

Fix $\beta \in (0,\pib)$.  When $\theta=\theta_{\beta}$, Figure~\ref{F-fig12-sc6-gc-1}~(B), the nodal set $\cZ(\Phi_{\beta,\theta_{\beta}})$ hits the boundary part $\set{x=\pi}$ at the point $(\pi,y_{\beta})$, which is a critical zero of order $3$. The nodal set consists in two curves issued from this point, forming equal angles with the boundary; one of them hits the boundary part $\set{x=0}$, the other hits the boundary part $\set{x=\pi}\times (\pib,\pi)$, at critical zeros of order $2$. They do not intersect in the interior. When $0 < \theta < \theta_{\beta}$, Figure~(A), the nodal sets consists in two disjoint curves, with end points on the boundary, critical zeros of order $2$. When $\theta_{\beta} < \theta < \pib$, Figure~(C), the nodal set consists of one simple curve with end points on the boundary, critical zeros of order $2$. No interior critical zeros in these three patterns. As expected there are two nodal domains in all these cases. \medskip

Figure~\ref{F-fig12-sc6-gc-1}~(C) presents another peculiarity. The nodal domain labelled ``1'' is not simply connected, and actually homeomorphic to a M\"{o}bius strip, and it is not orientable. We refer to Figure~\ref{F-fig12-sc6-cW} for 3D-pictures.

\begin{figure}[h]
\centering
\begin{subfigure}[t]{.30\textwidth}
\centering
\includegraphics[width=\linewidth]{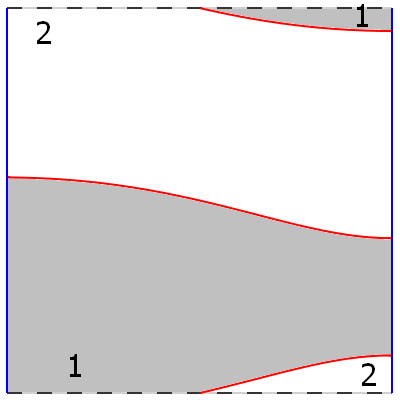}
\caption{$0 < \theta < \theta_{\beta}$}
\end{subfigure}
\begin{subfigure}[t]{.30\textwidth}
\centering
\includegraphics[width=\linewidth]{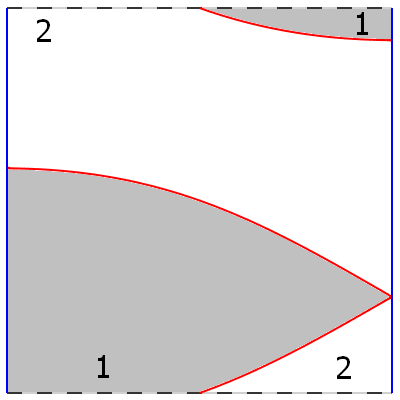}
\caption{$\theta = \theta_{\beta}$}
\end{subfigure}
\begin{subfigure}[t]{.30\textwidth}
\centering
\includegraphics[width=\linewidth]{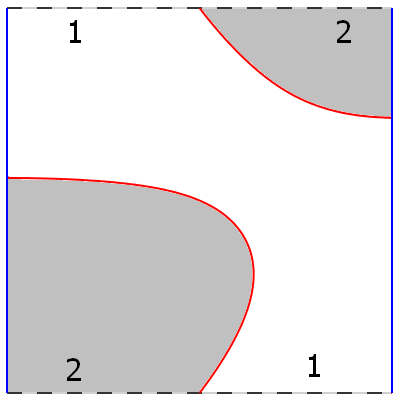}
\caption{$\theta_{\beta} < \theta < \pib$}
\end{subfigure}
\caption{Nodal set $\cZ(\Phi_{\beta,\theta})$, for $0 < \beta < \pib$ and
$0 < \theta < \pib$}\label{F-fig12-sc6-gc-1}
\end{figure}
\FloatBarrier \medskip

\textbf{Some 3D-pictures.}\medskip

\begin{figure}[!ht]
\centering
\includegraphics[scale=0.40]{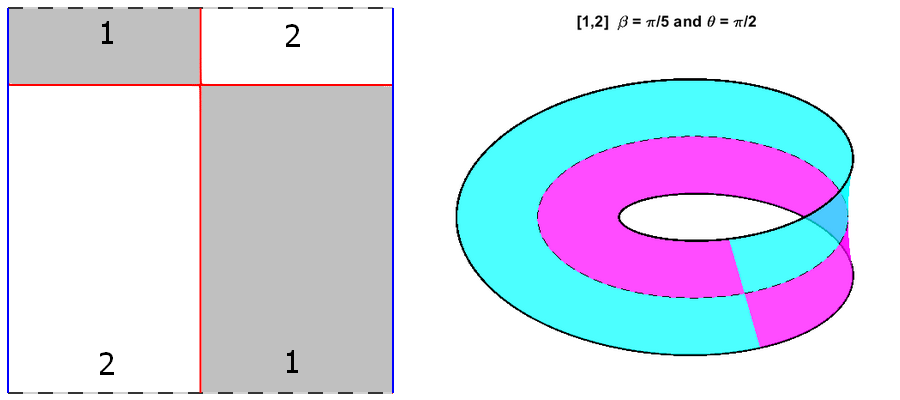}
\caption{Second eigenfunction with an interior critical zero and a closed nodal line ($\beta=\frac{\pi}{5}, \theta=\pib$)}\label{F-fig12-sc2-aW}
\end{figure}

\begin{figure}[!ht]
\centering
\includegraphics[scale=0.40]{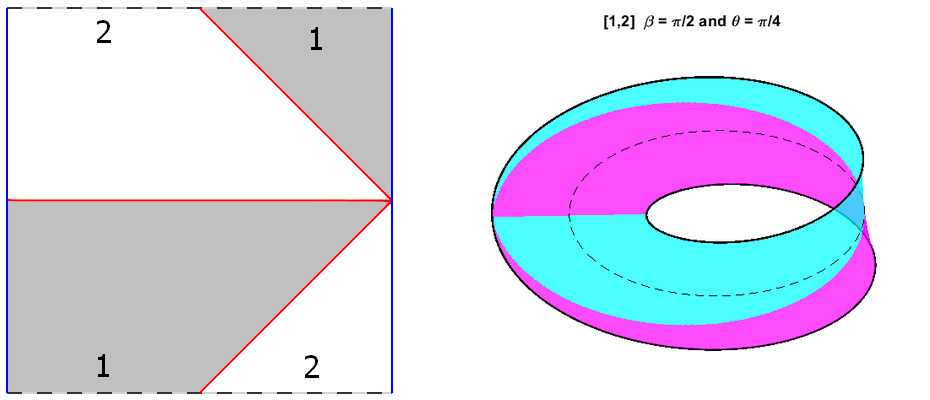}
\caption{Second eigenfunction with a boundary critical zero of order $4$ ($\beta=\pib, \theta=\pid$)}\label{F-fig12-sc4-bW}
\end{figure}

\begin{figure}[!ht]
\centering
\includegraphics[scale=0.40]{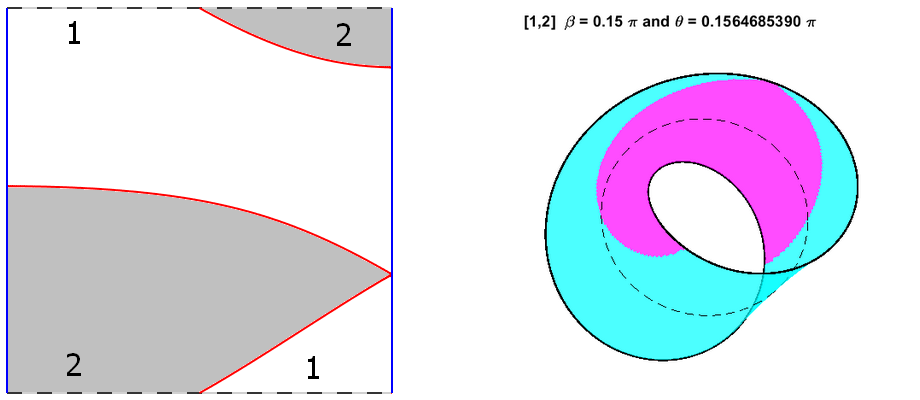}
\caption{Second eigenfunction with a boundary critical zero of order $3$ ($\beta$ given, $\theta=\theta_{\beta}$)}\label{F-fig12-sc6-bW}
\end{figure}

\begin{figure}[!ht]
\centering
\includegraphics[scale=0.40]{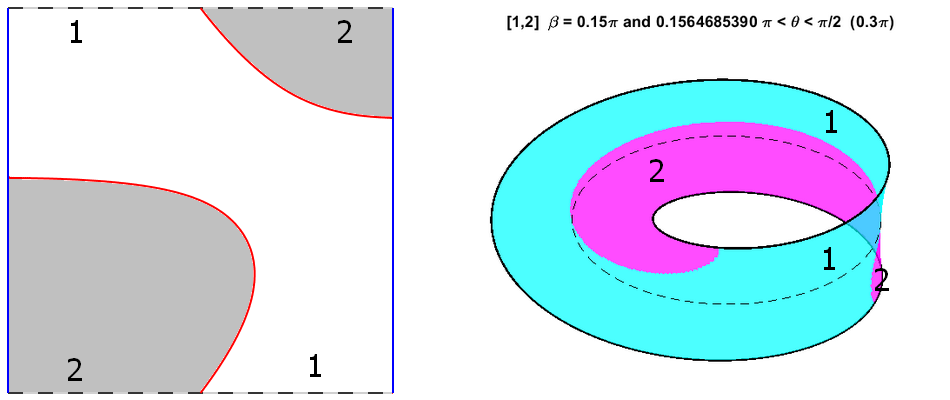}
\caption{Second eigenfunction with a non simply connected nodal domain ($\beta$ given, $\theta_{\beta} < \theta < \pib$)}\label{F-fig12-sc6-cW}
\end{figure}

\FloatBarrier

\begin{remark}\label{R-es12-conf}
The picture on the right hand side of Figure~\ref{F-fig12-sc4-bW}, resp.  \ref{F-fig12-sc6-cW}, seems to violate the equal angle condition at the boundary critical zero of order $4$, resp $3$. The reason is that the map $F$ is not conformal, as pointed out in Remark~\ref{R-mop-2}.
\end{remark}%

\section{Analysis of the eigenspace $\cE_{\lambda_7}$}\label{S-es23}

The purpose of this section is to finish the proof of Proposition~\ref{P-esmn-4} by proving the following lemma. Recall that $\cE_{\lambda_7} = \cE_{[2,3]}$, see Table~\ref{T-moeb-2}.

\begin{lemma}\label{L-es23-2}
An eigenfunction $\Phi$ associated with the Dirichlet eigenvalue $\lambda_7 $ of the M\"{o}bius strip $M_1$ has at most six nodal domains. As a consequence, the eigenvalue $\lambda_7$ is not Courant-sharp.
\end{lemma}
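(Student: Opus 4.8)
The plan is to mirror the analysis of $\cE_{\lambda_2}$ carried out in Section~\ref{S-es12}, now for the eigenspace $\cE_{\lambda_7}=\cE_{[2,3]}$, and to count in every case the number of nodal domains, showing it never exceeds $6$. Concretely, I would study the family
\[
\Phi_{\beta,\theta}(x,y)=\cos\theta\,\sin(2x)\,\sin(3y)+\sin\theta\,\sin(3x)\,\sin(2y+\beta).
\]
By Proposition~\ref{P-esmn-2}(2) it suffices to treat $(\beta,\theta)\in(0,\pic]\times[0,\pib]$, since every nodal pattern of an element of $\cE_{[2,3]}$ is isometric to one arising for such parameters. Because $\lambda_7$ has label $7$, Courant's theorem \cite[Chap.~{VI}.6]{CH1953} already gives at most $7$ nodal domains; establishing the sharper bound $6<7$ is exactly what proves that $\lambda_7$ is not Courant-sharp.

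The bookkeeping device I would use is an Euler-type formula for the nodal graph on $M_1$ (cf. Section~\ref{S-eulerm}), exploiting $\chi(M_1)=0$. Writing $\mathcal{G}=\cZ(\Phi)\cup\partial M_1$ as a graph whose faces are the $\mu$ nodal domains $f_\ell$, additivity of the Euler characteristic yields
\[
\mu=\sum_{\text{int.\ crit.\ zeros}}(\nu_i-1)\;+\;\frac{1}{2}\sum_{\text{bdry crit.\ zeros}}(k_j-1)\;+\;\sum_{\ell=1}^{\mu}\bigl(1-\chi(f_\ell)\bigr),
\]
where an interior critical zero of order $\nu_i$ carries $2\nu_i$ nodal arcs (Definition~\ref{D-esmn-2}), a boundary critical zero of order $k_j$ emits $k_j-1$ interior arcs, and $\chi(f_\ell)$ is the Euler characteristic of the nodal domain $f_\ell$ (equal to $1$ for a disk, strictly smaller when $f_\ell$ is not simply connected, so the last sum is $\ge 0$). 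I would calibrate this formula on the decomposed cases: for $\theta=0$ it gives $3+3+0=6$ and for $\theta=\pib$ it gives $4+2+0=6$, both attaining the bound.

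The inputs to the formula are controlled by the low-frequency structure of $\Phi_{\beta,\theta}$. The boundary critical zeros are the zeros on $\{x=0\}$ and $\{x=\pi\}$ of
\[
\partial_x\Phi(0,y)=2\cos\theta\sin3y+3\sin\theta\sin(2y+\beta),\qquad
\partial_x\Phi(\pi,y)=2\cos\theta\sin3y-3\sin\theta\sin(2y+\beta),
\]
each a trigonometric polynomial of degree $3$ in $y$, hence with at most six zeros per period; the interior critical zeros solve the low-degree trigonometric system $\Phi=\partial_x\Phi=\partial_y\Phi=0$, which has controllably many solutions. I would then run the case analysis over $(\beta,\theta)$: the decomposed cases $\theta=0$ and $\theta=\pib$ are treated explicitly (tracking the identification $(x,0)\sim(\pi-x,\pi)$ shows exactly six domains), and for $\theta\in(0,\pib)$ I would determine, for each $\beta\in(0,\pic]$, a bifurcation value $\theta_\beta$ at which a higher-order critical zero appears, describing the three regimes $\theta<\theta_\beta$, $\theta=\theta_\beta$, $\theta>\theta_\beta$ and computing $\mu\le 6$ in each (typically fewer), with the endpoint $\beta=\pic$ handled separately.

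The main obstacle is the nodal-set bookkeeping in the generic regime: locating the interior critical zeros and their orders from the trigonometric system, identifying the bifurcations $\theta_\beta$, and — crucially, on a non-orientable surface — correctly accounting for nodal domains that fail to be simply connected (as already occurs for $\cE_{\lambda_2}$, where a domain can be homeomorphic to a Möbius strip), which is precisely the role of the correction term $\sum_\ell(1-\chi(f_\ell))$. Once every case is shown to satisfy $\mu\le 6$, no eigenfunction in $\cE_{\lambda_7}$ can have $7$ nodal domains, which completes the proof of Lemma~\ref{L-es23-2} and hence of Proposition~\ref{P-esmn-4}.
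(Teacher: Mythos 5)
Your reduction to the family $\Phi_{\beta,\theta}$ with $(\beta,\theta)\in(0,\pic]\times[0,\pib]$, and your three-regime case split around a bifurcation value $\theta_\beta$, match the paper's strategy, and your Euler-type identity is correct in principle (it is essentially Property~\ref{P-eulerm-2}, with $\sum_\ell(1-\chi(f_\ell))$ playing the role of $\omega(\cD)+b_1-b_0$; your calibrations $3+3+0=6$ and $4+2+0=6$ check out). The genuine gap is that this identity, as you deploy it, cannot yield the upper bound $\mu\le 6$: the correction term $\sum_\ell\bigl(1-\chi(f_\ell)\bigr)$ sits on the same side as the critical-zero sums, and you only bound it from \emph{below} ($\ge 0$), whereas the inequality needs an \emph{upper} bound on it --- and it is genuinely positive in the cases at hand (for $\theta_\beta<\theta<\pib$ one nodal domain is homeomorphic to a M\"{o}bius strip, cf.\ Figure~\ref{F-es23-gc}\,(C)). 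Your count of boundary critical zeros is also too crude: $\partial_x\Phi(\xi,\cdot)$ for $\xi\in\set{0,\pi}$ is a degree-$3$ trigonometric polynomial on each boundary piece, so ``at most six zeros per period'' allows up to $12$ boundary critical zeros in total, contributing up to $6$ to your formula before the uncontrolled topological correction is even added. In short, the local data you propose to compute (numbers and orders of critical zeros) do not determine $\mu$; one needs global information about how the nodal arcs connect, which is exactly what you flag as ``the main obstacle'' without supplying a mechanism to overcome it.

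That mechanism is the core of the paper's proof and is absent from your plan. The paper confines $\cZ(\Phi_{\beta,\theta})$ to the checkerboard $\set{P_\beta\le 0}$, where $P_\beta(x,y)=\sin(2x)\sin(3y)\sin(3x)\sin(2y+\beta)$ (Proposition~\ref{P-es23-6}); it excludes closed nodal lines, and nodal arcs with both endpoints on the boundary, inside any single checkerboard cell by domain monotonicity of the first Dirichlet eigenvalue (Lemma~\ref{L-es23-4}: such an arc would bound a nodal subdomain $\omega$ with $\delta(\omega)>\lambdah(2,3)=13$, contradicting $\delta(\omega)=\lambdah(2,3)$); and it counts the boundary critical zeros \emph{exactly} --- $4$, $5$ or $6$ in the generic case, according to the position of $\cot\theta$ relative to $m_\beta$ --- via the monotonicity analysis of $f(\beta,y)$ and $g(\beta,y)$ in Subsection~\ref{SS-es23-f-g}. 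Together with Proposition~\ref{P-es23-4} (no interior critical zeros for $\beta\in(0,\pic)$, $\theta\in(0,\pib)$; interior critical zeros only on $\set{y=\beta}$ for $\beta\in\set{0,\pic}$), this pins down the nodal pattern completely and gives exactly $3$ domains for $\beta\in(0,\pic)$, $4$ for $\beta\in\set{0,\pic}$, and $6$ for $\theta\in\set{0,\pib}$. Without an analogue of this confinement-plus-Faber-Krahn step, or some other device controlling both the arc connectivity and $\sum_\ell(1-\chi(f_\ell))$ from above, your Euler bookkeeping remains a consistency check on patterns already known rather than a proof that no eigenfunction in $\cE_{\lambda_7}$ has $7$ nodal domains.
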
%

Taking Proposition~\ref{P-esmn-2} into account, we consider the family of eigenfunctions,
\begin{equation}\label{E-es23-102}
\Phi_{\beta,\theta}(x,y) = \cos\theta \, \sin(2x) \sin(3y) + \sin\theta \, \sin(3x) \, \sin(2y+\beta)\,,
\end{equation}
for $\beta \in [0,\pic]$ and $\theta \in [0,\pib]$, and examine three cases. More precisely, we will prove that,
\begin{enumerate}
  \item when $\theta = 0$ or $\theta = \pib$, $\Phi_{\beta,\theta}$ has $6$ nodal domains, see Subsection~\ref{SS-sc23-12};
  \item when $\beta \in (0,\pic)$ and $\theta \in (0,\pib)$, $\Phi_{\beta,\theta}$ has $3$ nodal domains, see Subsection~\ref{SS-sc23-6};
  \item when $\beta \in \set{0,\pic}$ and $\theta \in (0,\pib)$, $\Phi_{\beta,\theta}$ has $4$ nodal domains, see Subsection~\ref{SS-sc23-34}.
\end{enumerate}

Subsections~\ref{SS-sc23-gp} and \ref{SS-es23-f-g} contain technical preliminaries.
\medskip

Recall that the M\"{o}bius strip $M_1$ is obtained by identifying the lines $\set{y=0}$ and $\set{y=\pi}$ via $(x,y) \sim (\pi-x,y+\pi)$. The figures below represent the nodal sets in the fundamental domain $\cR_1$, or on the topological 3D-representation given by the map $F$ defined in \eqref{E-mop-10}, see Remark~\ref{R-mop-2}. The nodal lines appear in \emph{red} in the fundamental domain $\cR_1$, the Dirichlet boundary in \emph{blue}. When $\set{y=0} \sim \set{y=\pi}$ is not a nodal line, the \emph{dashed black lines} indicate the M\"{o}bius identification.

\subsection{Special values $\theta=0$ and $\theta=\pib$}\label{SS-sc23-12}

The eigenfunctions $\Phi_{\beta,0}$ (actually independent of $\beta$), and $\Phi_{\beta,\pib}$ play a special role. Their nodal sets are explicit; they are the unions of horizontal and vertical segments, see Figure~\ref{F-23-sc1}. There are both interior and boundary critical zeros, all of order $2$.  \emph{There are $6$ nodal domains in each case.}\medskip

\begin{figure}[ht]
\centering
\begin{subfigure}[t]{.80\textwidth}
\centering
\includegraphics[scale=0.3]{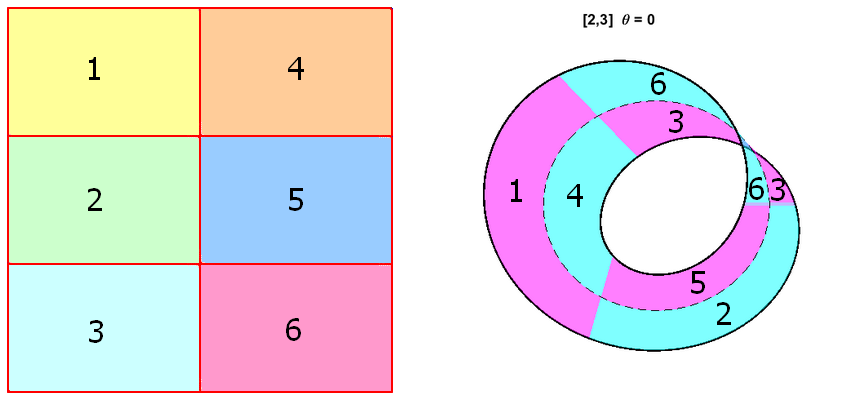}
\caption{$\Phi_{\beta,0}$}
\end{subfigure}
\begin{subfigure}[t]{.80\textwidth}
\centering
\includegraphics[scale=0.3]{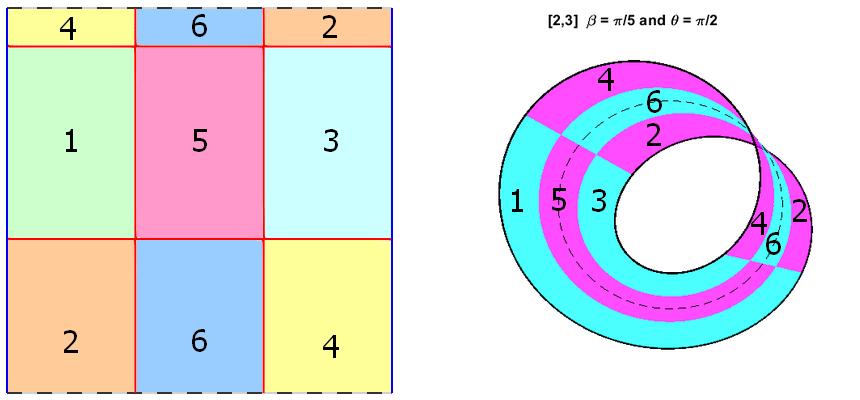}
\caption{$\Phi_{\frac{\pi}{5},\pib}$}
\end{subfigure}
 \caption{Nodal sets $\cZ(\Phi_{\beta,0})$ and $\cZ(\Phi_{\frac{\pi}{5},\pib})$}\label{F-23-sc1}
\end{figure}

Figure~(A) illustrates the following property: cutting the M\"{o}bius strip along the soul circle yields a strip with half the width, and twice the length of the original strip. This strip is doubly twisted and hence orientable. Figure~(B) illustrates what happens when one cuts the strip in the $y$-direction, at exactly one-third the width. This operation produces two intertwined strips, one M\"{o}bius strip (corresponding to the domains labelled ``5'' and ``6''), and a doubly twisted (orientable) strip (corresponding to the domains labelled ``1, 2, 3'' and ``4'').
\FloatBarrier

\subsection{General properties of the nodal sets}\label{SS-sc23-gp}

As a preparation for the analysis of the nodal sets of $\Phi_{\beta,\theta}$, we gather some general properties.

\begin{proposition}\label{P-es23-2}
Assume that $\beta \in [0,\pic]$ and $\theta \in (0,\pib)$. The following properties hold.
\begin{enumerate}
  \item For any $\xi \in (0,\pi)$, the nodal set $\cZ(\Phi_{\beta,\theta})$ does not contain the segment $\set{x=\xi}$.
  \item For $\eta \in [0,\pi]$, the nodal set $\cZ(\Phi_{\beta,\theta})$ contains the horizontal segment $\set{y=\eta}$, if and only if $(\eta,\beta) \in \set{(0,0);(\pi,0);(\pic,\pic)}$.
\end{enumerate}
\end{proposition}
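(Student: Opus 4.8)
The plan is to prove each statement by examining the restriction of $\Phi_{\beta,\theta}$ to the relevant vertical or horizontal segment and showing it cannot vanish identically except in the listed cases. Throughout, I work with
$$
\Phi_{\beta,\theta}(x,y) = \cos\theta \, \sin(2x) \sin(3y) + \sin\theta \, \sin(3x) \, \sin(2y+\beta)\,,
$$
and I use that $\cos\theta, \sin\theta > 0$ since $\theta \in (0,\pib)$.

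For statement (1), suppose the vertical segment $\set{x=\xi}$ for some $\xi \in (0,\pi)$ were contained in $\cZ(\Phi_{\beta,\theta})$, i.e. $\Phi_{\beta,\theta}(\xi,y)=0$ for all $y$. Then for every $y$,
$$
\cos\theta \, \sin(2\xi)\, \sin(3y) + \sin\theta \, \sin(3\xi)\, \sin(2y+\beta) = 0\,.
$$
First I would expand $\sin(2y+\beta) = \cos\beta\,\sin(2y) + \sin\beta\,\cos(2y)$ and read off the identity as a trigonometric polynomial in $y$. The functions $\sin(3y)$, $\sin(2y)$, $\cos(2y)$ are linearly independent on any interval, so each coefficient must vanish: $\cos\theta\,\sin(2\xi)=0$ and $\sin\theta\,\sin(3\xi)\,\cos\beta = \sin\theta\,\sin(3\xi)\,\sin\beta = 0$. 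Since $\cos\theta\neq 0$ this forces $\sin(2\xi)=0$, hence $\xi \in \set{\pib}$ as the only interior possibility; and $\sin\theta\neq 0$ together with $(\cos\beta,\sin\beta)\neq(0,0)$ forces $\sin(3\xi)=0$, which is incompatible with $\xi=\pib$ (where $\sin(3\xi)=\sin(3\pi/2)=-1\neq 0$). This contradiction completes part (1).

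For statement (2), I would repeat the argument with the roles of $x$ and $y$ interchanged. Assume $\Phi_{\beta,\theta}(x,\eta)=0$ for all $x$, i.e.
$$
\bigl(\cos\theta\,\sin(3\eta)\bigr)\sin(2x) + \bigl(\sin\theta\,\sin(2\eta+\beta)\bigr)\sin(3x) = 0\,.
$$
Because $\sin(2x)$ and $\sin(3x)$ are linearly independent on $(0,\pi)$, both coefficients must vanish, giving $\sin(3\eta)=0$ and $\sin(2\eta+\beta)=0$ (using $\cos\theta,\sin\theta\neq 0$). The first condition gives $\eta \in \set{0,\pic,\pit,\pi}$ within $[0,\pi]$; I would then impose the second condition $2\eta+\beta \equiv 0 \pmod \pi$ for each candidate, restricting $\beta$ to $[0,\pic]$. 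For $\eta=0$ this gives $\beta \equiv 0$, so $\beta=0$; for $\eta=\pi$, $\beta\equiv 0$, so $\beta=0$ again; for $\eta=\pic$, $2\eta=\pit$ and $\beta = \pic$ works; for $\eta=\pit$, $2\eta=\frac{4\pi}{3}$ and $\beta \equiv -\frac{4\pi}{3} \equiv -\frac{\pi}{3} \pmod\pi$, which lands outside $[0,\pic]$. This yields exactly the three pairs $(0,0)$, $(\pi,0)$, $(\pic,\pic)$, as claimed, and I would also verify these three are genuinely nodal by substitution.

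The only real subtlety, and where I would be most careful, is the linear-independence step: on the full line the functions $\set{\sin(2x),\sin(3x)}$ (resp. $\set{\sin(3y),\sin(2y),\cos(2y)}$) are independent, but one should confirm that vanishing on the subinterval $(0,\pi)$ still forces all coefficients to be zero. This holds because a nontrivial finite trigonometric polynomial is real-analytic and cannot vanish on a set with an accumulation point, so vanishing on an open interval already forces it to be identically zero and hence all Fourier coefficients vanish. The remaining work is the bookkeeping of the modular conditions on $\beta$ restricted to $[0,\pic]$, which is routine once the coefficient equations are in hand.
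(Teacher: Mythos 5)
Your proof is correct and takes essentially the same route as the paper: both arguments reduce the identical vanishing of $\Phi_{\beta,\theta}$ along the segment to the simultaneous vanishing of the two coefficients $\cos\theta\,\sin(2\xi)$ and $\sin\theta\,\sin(3\xi)$ (resp. $\cos\theta\,\sin(3\eta)$ and $\sin\theta\,\sin(2\eta+\beta)$), and then solve the resulting trigonometric conditions under the restriction $\beta \in [0,\pic]$. The only cosmetic difference is that you extract the coefficients via linear independence justified by analyticity, whereas the paper gets the same conclusions by evaluating at the special points $y=0$, $y=\pib$, and $x=\pib$; your explicit substitution check of the ``if'' direction and of the excluded case $\eta=\pit$ is sound bookkeeping that the paper leaves implicit.
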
%

\pf In this proof, we use the abbreviation\footnote{We shall do so in the subsequent proofs, whenever there is no ambiguity.} $\Phi$ for $\Phi_{\beta,\theta}$.\smallskip

\emph{Assertion~(1).}~ Let $\xi \in (0,\pi)$. Then, $\set{x=\xi} \subset \cZ(\Phi_{\beta,\theta})$ if and only if
\begin{equation*}
\Phi(\xi,y) = \cos\theta \, \sin(2\xi) \sin(3y) + \sin\theta \, \sin(3\xi) \, \sin(2y+\beta) \equiv 0\,,
\end{equation*}
as a function of $y$.\medskip

\noid If $\beta=0$, choosing $y=\pib$ we find that $\sin(2\xi)=0$. This in turn implies that $\sin\theta \, \sin(3\xi) \, \sin(2y+\beta) \equiv 0$, and hence that $\sin(3\xi)=0$, a contradiction.\smallskip

\noid If $\beta \not = 0$, choosing $y=0$, we find that $\sin(3\xi)=0$. This in turn implies that $ \cos\theta \, \sin(2\xi) \sin(3y) \equiv 0$, and hence that $\sin(2\xi)=0$, a contradiction.\medskip

\emph{Assertion~(2).}~ Take $\eta \in [0,\pi]$, and assume that $\Phi(x,\eta) \equiv 0$ as a function of $x$. Choosing $x=\pib$, we find that $\sin(3\eta)=0$. This in turn implies that $\sin\theta \, \sin(3x) \, \sin(2\eta+\beta) \equiv 0$, which implies that $\sin(2\eta+\beta)=0$. The conditions $\sin(3\eta)=0$ and $\sin(2\eta+\beta)=0$ occur simultaneously if and only if $(\eta,\beta) \in \set{(0,0);(\pi,0);(\pic,\pic)}$. \hfill \qed

\begin{proposition}\label{P-es23-4}
Assume that $\beta \in [0,\pic]$, and that $\theta \in (0,\pib)$. Let $(\xi,\eta) \in \overline{M_1}$ (or $\overline{\cR_1}$) be a critical zero of $\Phi_{\beta,\theta}$. The following properties hold.
\begin{enumerate}
  \item The point $(\xi,\eta)$ satisfies,
  \begin{equation}\label{E-es23-106}
    \sin(3\eta) \, \sin(2\eta+\beta) \, \sin(\xi) = 0.
  \end{equation}
  \item If $\xi\in (0,\pi)$, i.e., if $(\xi,\eta) \in M_1$ is an interior critical zero of $\Phi_{\beta,\theta}$, then
  \begin{equation}\label{E-es23-108}
  (\eta,\beta) \in \set{(0,0);(\pi,0);(\pic,\pic)}\,,
  \end{equation}
  and the point $(\xi,\eta)$ has order $2$. In particular, if $\beta \in (0,\pic)$ and $\theta \in (0,\pib)$, the function $\Phi_{\beta,\theta}$ has no interior critical zero. If $\beta \in \set{0,\pic}$ and $\theta \in (0,\pib)$, any interior critical zero of $\Phi_{\beta,\theta}$ lies on the line $\set{y=\beta}$.\smallskip
  \item If $\xi \in \set{0,\pi}$, i.e., if $(\xi,\eta)$ is a boundary critical zero of $\Phi_{\beta,\theta}$, then $(\xi,\eta)$ has order $k \in \set{2,3,4}$. More precisely,
      \begin{enumerate}
        \item the point $(\xi,\eta)$ is a boundary critical zero if $\xi \in \set{0,\pi}$, and
$$
\left\{
\begin{array}{l}
\text{either~} (\eta,\beta) \in \set{(0,0);(\pi,0);(\pic,\pic)},\\[5pt]
\text{or~} \eta \not \in \set{0,\pic,\pit,\pi} \text{~and~} \cot\theta =
- \cos(\xi)\, f(\beta,\eta),
\end{array}%
\right.
$$
where the function $f(\beta,y)$ is defined by,
\begin{equation}\label{E-es23-108f}
f(\beta,y) =  \frac 32 \, \frac{\sin(2y+\beta)}{\sin(3y)}\,~~~ \text{~for~} \beta \in [0,\pic], ~y \in (0,\pi)\sm\set{\pic,\pit}\,;
\end{equation}
        \item the boundary critical zero $(\xi,\eta)$ has order at least $3$ if and only if \goodbreak $g(\beta,\eta)=0$ as well, where the function $g(\beta,y)$ is defined on $[0,\pi]^2$ by
\begin{equation}\label{E-es23-108g}
g(\beta,y) = 2 \cos(2y+\beta) \sin(3y) - 3 \cos(3y) \sin(2y+\beta)\,;
\end{equation}
        \item the boundary critical zero $(\xi,\eta)$ can only have order at least $4$ if \goodbreak $\sin(3\eta) \sin(2\eta+ \beta) = 0$. In that case, the order is $4$.
      \end{enumerate}
\end{enumerate}
\end{proposition}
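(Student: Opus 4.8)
The plan is to reduce every assertion to the parity of $\Phi_{\beta,\theta}$, the harmonicity of the leading Taylor term at a zero, and the non-vanishing on $(0,\pi)$ of the single quantity
\[
D(\xi) := 3\sin(2\xi)\cos(3\xi) - 2\cos(2\xi)\sin(3\xi) = 2\sin^3(\xi)\,(4\sin^2\xi - 5),
\]
which is strictly negative for $\xi \in (0,\pi)$ since then $\sin\xi>0$ and $4\sin^2\xi-5<0$. First I would record
\[
\partial_x\Phi = 2\cos\theta\cos(2x)\sin(3y) + 3\sin\theta\cos(3x)\sin(2y+\beta),\quad
\partial_y\Phi = 3\cos\theta\sin(2x)\cos(3y) + 2\sin\theta\sin(3x)\cos(2y+\beta),
\]
and observe that $\Phi_{\beta,\theta}$ is odd with respect to each of the lines $\set{x=0}$ and $\set{x=\pi}$. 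Hence on the boundary only the $x$-odd derivatives $\partial_x\Phi,\partial_{xy}\Phi,\partial_x^3\Phi,\dots$ can be non-zero, and since $\Delta\Phi=-\lambda\Phi$, the lowest non-trivial homogeneous part of the Taylor expansion at a zero is a harmonic polynomial; this is what makes the order well defined and will let me convert each statement ``order $\ge k$'' into the vanishing of one explicit derivative.

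For \emph{Assertion~(1)}, if $\xi\in\set{0,\pi}$ then $\sin(\xi)=0$ and \eqref{E-es23-106} is trivial. If $\xi\in(0,\pi)$, I read the two equations $\Phi=0$ and $\partial_x\Phi=0$ as a homogeneous linear system in the unknowns $(\sin(3\eta),\sin(2\eta+\beta))$; its determinant equals $\cos\theta\,\sin\theta\,D(\xi)$, which is non-zero for $\theta\in(0,\pib)$ and $\xi\in(0,\pi)$. Therefore $\sin(3\eta)=\sin(2\eta+\beta)=0$, which proves \eqref{E-es23-106} and feeds directly into \emph{Assertion~(2)}: solving $\sin(3\eta)=0$ (so $\eta\in\set{0,\pic,\pit,\pi}$) together with $\sin(2\eta+\beta)=0$ under $\beta\in[0,\pic]$ leaves exactly the three pairs in \eqref{E-es23-108} (the value $\eta=\pit$ being excluded because it would force $\beta=\pit$). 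For the order, I note that at such a point the horizontal line $\set{y=\eta}$ is nodal by Proposition~\ref{P-es23-2}(2), so $\Phi$ and its tangential derivatives vanish there, forcing $\partial_x^2\Phi=0$ and, via $\Delta\Phi=0$, also $\partial_y^2\Phi=0$; the order is $2$ precisely when $\partial_{xy}\Phi\neq0$. Were $\partial_{xy}\Phi=0$ as well, then $\partial_y\Phi=0$ and $\partial_{xy}\Phi=0$ would give a homogeneous linear system in $(\cos\theta,\sin\theta)$ whose determinant is $\pm D(\xi)\neq0$, contradicting $(\cos\theta,\sin\theta)\neq(0,0)$. The last two sentences of (2) are then immediate from \eqref{E-es23-108}.

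For \emph{Assertion~(3)}, on $\set{x=\xi}$ with $\xi\in\set{0,\pi}$ one has $\Phi=\partial_y\Phi=0$ automatically, and using $\cos(2\xi)=1$, $\cos(3\xi)=\cos(\xi)$ I get $\partial_x\Phi(\xi,\eta)=2\cos\theta\sin(3\eta)+3\cos(\xi)\sin\theta\sin(2\eta+\beta)$. The point is a boundary critical zero (order $\ge2$) iff this vanishes: if $\sin(3\eta)\neq0$ this is exactly $\cot\theta=-\cos(\xi)f(\beta,\eta)$, and if $\sin(3\eta)=0$ it forces $\sin(2\eta+\beta)=0$, i.e. one of the special pairs, giving the dichotomy of (3)(a). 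By the parity remark, the degree-$2$ homogeneous part is just a multiple of $\partial_{xy}\Phi$, so the order is $\ge3$ iff $\partial_{xy}\Phi=\nobreak 6\bigl[\cos\theta\cos(3\eta)+\cos(\xi)\sin\theta\cos(2\eta+\beta)\bigr]=0$; eliminating $\cot\theta$ through the order-$2$ relation turns this into $g(\beta,\eta)=0$, which is (3)(b) (and $g=0$ holds automatically at the special pairs, where the elimination degenerates). Finally, the harmonic degree-$3$ part is a multiple of $\partial_x^3\Phi$, and using $\partial_x\Phi=0$ one computes $\partial_x^3\Phi=-15\cos(\xi)\sin\theta\sin(2\eta+\beta)$, so the order is $\ge4$ only if $\sin(2\eta+\beta)=0$, which together with $\partial_x\Phi=0$ forces $\sin(3\eta)=0$ as well; this yields the necessary condition $\sin(3\eta)\sin(2\eta+\beta)=0$ of (3)(c).

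The step I expect to be the main obstacle is the \emph{exactness} of the order $4$: one must verify that at the configurations where $\partial_{xy}\Phi=\partial_x^3\Phi=0$ occur simultaneously (for the $[2,3]$ family this happens, e.g., at $\xi=\pi$, $(\eta,\beta)=(\pic,\pic)$, $\theta=\pid$) the harmonic degree-$4$ part, a multiple of $\partial_x^3\partial_y\Phi$, is non-zero, so that no order $\ge5$ can arise and hence $k\in\set{2,3,4}$ throughout. A direct evaluation gives $\partial_x^3\partial_y\Phi = 24\cos\theta-54\sin\theta$ at that configuration, equal to $-15\sqrt2\neq0$, which closes (3)(c). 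The remaining care is bookkeeping: keeping the split $\sin(3\eta)=0$ versus $\sin(3\eta)\neq0$ consistent across (3)(a)--(c), since the clean ``one-derivative'' criteria rely on the harmonic normalization, while the special pairs must be treated by the parity argument directly rather than by dividing by $\sin(3\eta)$.
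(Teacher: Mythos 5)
Your proposal is correct and follows essentially the same route as the paper's proof: the same systems of derivatives and the same elimination arguments hinging on the non-vanishing for $\xi\in(0,\pi)$ of $3\sin(2\xi)\cos(3\xi)-2\cos(2\xi)\sin(3\xi)=2\sin^{3}(\xi)\,(4\sin^{2}\xi-5)$ (which the paper writes, up to a sign slip, as $2\sin^3(\xi)(4\cos^2(\xi)+1)$), the same characterizations via $\partial_x\Phi$, $\partial_{xy}\Phi$ (i.e.\ $g(\beta,\eta)=0$) and $\partial_{x^3}\Phi$, and the same conclusion that order $\ge 4$ forces the special pairs. Your parity-plus-harmonic-leading-part packaging simply re-derives the paper's list \eqref{E-es23-130} of the derivatives not identically vanishing on $\set{x=\xi}$, and your single evaluation $\partial_{x^3y}\Phi=-15\sqrt{2}$ at $\theta=\pid$ is an adequate substitute for the paper's argument that $\partial_{x^3y}\Phi$ and $\partial_{xy^3}\Phi$ cannot vanish simultaneously, since every order-$4$ configuration forces $\theta=\pid$ and yields the same value up to sign.
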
%

\pf We use the abbreviation $\Phi$ for $\Phi_{\beta,\theta}$ in the proof.\smallskip

\emph{Assertion~(1).}~ The point $(\xi,\eta)$ is a critical zero of $\Phi$ if and only if it satisfies the system of equations
\begin{subequations}\label{E-es23-110}
\begin{align}
\Phi(x,y) &=
\cos\theta \, \sin(2x) \sin(3y) + \sin\theta \, \sin(3x) \sin(2y+\beta) =0, \label{E-es23-110n}\\[5pt]
\partial_x \Phi(x,y) &=
2 \cos\theta \, \cos(2x) \sin(3y) + 3 \sin\theta \, \cos(3x) \sin(2y+\beta) =0, \label{E-es23-110x}\\[5pt]
\partial_y \Phi(x,y) &=
3 \cos\theta \, \sin(2x) \cos(3y) + 2 \sin\theta \, \sin(3x) \cos(2y+\beta) =0.
\label{E-es23-110y}
\end{align}
\end{subequations}%

Since $\theta \in (0,\pib)$, if $(\xi,\eta)$  is a critical zero of $\Phi$, the determinant of the linear system \eqref{E-es23-110n} and \eqref{E-es23-110x} must vanish at $(\xi,\eta)$,
\begin{equation*}
\sin(3\eta)  \sin(2\eta+\beta) \, \left( 3 \sin(2\xi) \, \cos(3\xi) - 2 \cos(2\xi) \, \sin(3\xi)\right) = 0,
\end{equation*}
or equivalently
\begin{equation*}
\sin(3\eta) \, \sin(2\eta+\beta) \, \sin^3(\xi) \, (4\cos^2(\xi) +1)= 0\,.
\end{equation*}

\emph{Assertion~(2).}~ If $\xi \in (0,\pi)$, \eqref{E-es23-106} implies that $\sin(3\eta)\, \sin(2\eta+\beta) = 0$. Since $(\xi,\eta)$ is an interior critical zero, the system \eqref{E-es23-110}, and the assumption $\theta \in  (0,\pib)$, imply that
\begin{equation}\label{E-es23-112}
\sin(3\eta) \sin(2\eta+\beta) = 0 \Leftrightarrow (\eta,\beta) \in \set{(0,0);(\pi,0);(\pic,\pic)}\,.
\end{equation}
To conclude the proof of the assertion, we need the second derivatives of $\Phi$.
\begin{subequations}\label{E-es23-120}
\begin{align}
\partial_{xx} \Phi(x,y) &= - 4 \cos\theta \, \sin(2x) \sin(3y) - 9 \sin\theta \, \sin(3x) \sin(2y+\beta),\label{E-es23-120xx}\\[5pt]
\partial_{xy} \Phi(x,y) &= 6 \cos\theta \, \cos(2x) \cos(3y) + 6 \sin\theta \, \cos(3x) \cos(2y+\beta),\label{E-es23-120xy}\\[5pt]
\partial_{yy} \Phi(x,y) &= -9 \cos\theta \, \sin(2x) \sin(3y) - 4 \sin\theta \, \sin(3x) \sin(2y+\beta).\label{E-es23-120yy}
\end{align}
\end{subequations}

For $(\eta,\beta) \in \set{(0,0);(\pi,0);(\pic,\pic)}$, we have
\begin{equation*}
\partial_{x^2}\Phi_{\beta,\theta}(x,\eta)\equiv 0, ~~\partial_{y^2}\Phi_{\beta,\theta}(x,\eta)\equiv 0\,.
\end{equation*}

The critical zero $(\xi,\eta)$ has order at least $3$, if
$\partial_y\Phi_{\beta,\theta}(\xi,\eta) = 0$ and $\partial_{xy}\Phi_{\beta,\theta}(\xi,\eta) = 0$. This system implies that
\begin{equation*}
\big( 3 \cos(3\xi) \sin(2\xi) - 2 \cos(2\xi) \sin(3\xi) \big) \cos(3\eta) \cos(2\eta+\beta) = 0\,.
\end{equation*}

The term in the parenthesis is equal to $2 \sin^3(\xi) (4 \cos^2(\xi) + 1)$, and hence does not vanish since $\xi \in (0,\pi)$. The second factor does not vanish either since $(\eta,\beta) \in \set{(0,0);(\pi,0);(\pic,\pic)}$. It follows that the second derivative $\partial_{xy}\Phi_{\beta,\theta}$ does not vanish at $(\xi,\eta)$. The proof of Assertion~(2) is complete.\medskip

\emph{Assertion~(3).}~ Let $\xi \in \set{0,\pi}$ and $\varepsilon = \cos(\xi)$. The only derivatives of $\Phi$,  of order less than or equal to $4$, which are not identically identically zero on $\set{x=\xi}$ are,
\begin{subequations}\label{E-es23-130}
\begin{align}
\partial_x\Phi(\xi,y) &= 2 \cos\theta \sin(3y) + 3 \varepsilon\, \sin\theta \sin(2y+\beta),\label{E-es23-13x}\\[5pt]
\partial_{xy}\Phi(\xi,y) &= 6 \cos\theta \cos(3y) + 6 \varepsilon\, \sin\theta \cos(2y+\beta),\label{E-es23-13xy}\\[5pt]
\partial_{x^3}\Phi(\xi,y) &= -8 \cos\theta \sin(3y) - 27 \varepsilon\, \sin\theta \sin(2y+\beta),\label{E-es23-13x3}\\[5pt]
\partial_{xy^2}\Phi(\xi,y) &= -18 \cos\theta \sin(3y) - 12 \varepsilon\, \sin\theta \sin(2y+\beta),\label{E-es23-13xy2}\\[5pt]
\partial_{x^3y}\Phi(\xi,y) &= -24 \cos\theta \cos(3y) -54 \varepsilon\, \sin\theta \cos(2y+\beta),\label{E-es23-13x3y}\\[5pt]
\partial_{xy^3}\Phi(\xi,y) &= -54 \cos\theta \cos(3y) - 24 \varepsilon\, \sin\theta \cos(2y+\beta).\label{E-es23-13xy3}
\end{align}%
\end{subequations}

The point $(\xi,\eta)$ is a boundary critical zero if and only if $\partial_{x}\Phi(\xi,\eta)=0$. This occurs either for $(\eta,\beta) \in \set{(0,0);(\pi,0);(\pic,\pic)}$, or for $\eta \not \in \set{0,\pic,\pit,\pi}$, and $\theta$ given by $\cot\theta = - \varepsilon f(\beta,\eta)$.\smallskip

The critical point $(\xi,\eta)$ has order at least $3$ if and only if $\partial_{xy}\Phi(\xi,\eta)=0$, i.e., $g(\beta,\eta)=0$.\smallskip

The boundary critical zero $(\xi,\eta)$ has order at least $4$ if and only if $\partial_{x^3}\Phi(\xi,\eta)=0$ and $\partial_{xy^2}\Phi(\xi,\eta)=0$ simultaneously. For this to occur, we must have $\sin(3\eta) \sin(2\eta+ \beta) = 0$. The relations $\partial_{x^3y}\Phi(\xi,\eta)=0$ and $\partial_{xy^3}\Phi(\xi,\eta)=0$ can only occur simultaneously if $\cos(3\eta) \cos(2\eta+\beta) = 0$. According to the previous relation, this means that a boundary critical zero has order less than or equal to $4$.
\hfill \qed

\begin{remark}\label{R-es23-2}
We shall refine the analysis of Assertion~(3) later on, in Subsection~\ref{SS-sc23-6}.
\end{remark}%

\begin{remark}\label{R-es23-4}
The function $f(\beta,y)$, defined in \eqref{E-es23-108f}, and the function $g(\beta,y)$, defined in \eqref{E-es23-108g}, are crucial to understand the structure of the nodal sets. We study them in Subsection~\ref{SS-es23-f-g}.
\end{remark}%

\begin{proposition}\label{P-es23-6}
Define the set
\begin{equation}\label{E-es23-140z}
\cZ_{\beta} :=\cZ (\Phi_{\beta,0}) \cap \cZ(\Phi_{\beta,\pib}).
\end{equation}
This is the set of common zeros of the eigenfunctions $\Phi_{\beta,\theta}$, where $\beta$ is fixed, and $\theta$ varies in $(0,\pib)$. Define the function
\begin{equation}\label{E-es23-140p}
P_{\beta}(x,y) = \sin(2x) \sin(3y) \, \sin(3x) \sin(2y+\beta).
\end{equation}
\begin{enumerate}
  \item For $\beta \in \set{0,\pic}$ and $\theta \in (0,\pib)$, the set $\cZ_{\beta}$ consists in the line $\set{y=\beta}$, and a finite number of regular points for $\Phi_{\beta,\theta}$ on $M_1\sm \set{y=\beta}$.
  \item For $\beta \in (0,\pic)$ and $\theta \in (0,\pib)$, the set $\cZ_{\beta}$ consists in finitely many regular points for $\Phi_{\beta,\theta}$.
  \item For $\beta \in (0,\pic)$, the nodal set $\cZ(\Phi_{\beta,\theta})$ does not meet the set $\set{P_{\beta} > 0}$, and only meets the nodal sets $\cZ(\Phi_{\beta,0})$ and $\cZ(\Phi_{\beta,\pib})$ at the points in $\cZ_{\beta}$.
  \item For $\beta \in \set{0,\pic}$, the nodal set $\cZ(\Phi_{\beta,\theta})$ does not meet the set $\set{P_{\beta} > 0}$, and only meets the nodal sets $\cZ(\Phi_{\beta,0})$ and $\cZ(\Phi_{\beta,\pib})$ at the points in $\cZ_{\beta}$, or along the common line $\set{y=\beta}$.
\end{enumerate}
\end{proposition}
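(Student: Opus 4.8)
The plan is to work directly with the two factors making up $\Phi_{\beta,\theta}$. Writing $u(x,y) = \sin(2x)\sin(3y)$ and $v(x,y) = \sin(3x)\sin(2y+\beta)$, we have $\Phi_{\beta,0} = u$, $\Phi_{\beta,\pib} = v$, $P_{\beta} = u\,v$, and $\Phi_{\beta,\theta} = \cos\theta\, u + \sin\theta\, v$. Since $\cos\theta>0$ and $\sin\theta>0$ for $\theta\in(0,\pib)$, a point lies in $\cZ_{\beta}=\cZ(u)\cap\cZ(v)$ exactly when $u$ and $v$ both vanish there, which is precisely the condition that $\Phi_{\beta,\theta}$ vanish there for \emph{every} $\theta$; in particular $\cZ_{\beta}\subset\cZ(\Phi_{\beta,\theta})$ for all $\theta$. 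This gives the stated interpretation of $\cZ_{\beta}$ and is the starting point for all four assertions.

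For Assertions~(1) and (2) I would first write down the two nodal grids explicitly. The set $\cZ(u)$ is the union of the vertical line $\set{x=\pib}$ with the horizontal lines $\set{y\in\set{0,\pic,\pit,\pi}}$, and $\cZ(v)$ is the union of the vertical lines $\set{x\in\set{\pic,\pit}}$ with the finitely many horizontal lines $\set{\sin(2y+\beta)=0}$ in $[0,\pi]$. Each grid is a finite union of horizontal and vertical segments, so $\cZ_{\beta}$ is a finite set of points together with any segment common to both grids. Two vertical segments coincide only on the Dirichlet boundary $\set{x\in\set{0,\pi}}$, while two horizontal segments $\set{y=\eta}$ coincide exactly when $\sin(3\eta)=0$ and $\sin(2\eta+\beta)=0$ simultaneously; by Proposition~\ref{P-es23-2}(2) this forces $(\eta,\beta)\in\set{(0,0);(\pi,0);(\pic,\pic)}$. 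Accounting for the identification $(x,0)\sim(\pi-x,\pi)$, such a common horizontal line is present precisely when $\beta\in\set{0,\pic}$, in which case it is the circle $\set{y=\beta}$; for $\beta\in(0,\pic)$ no horizontal lines coincide and $\cZ_{\beta}$ reduces to the finitely many transversal intersection points. This is the dichotomy between (1) and (2).

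It then remains to check regularity and to handle (3)--(4). All transversal intersection points have $x\in\set{\pic,\pib,\pit}\subset(0,\pi)$, hence are interior points of $M_1$. By Proposition~\ref{P-es23-4}(2), for $\beta\in(0,\pic)$ the function $\Phi_{\beta,\theta}$ has no interior critical zero, so each such common zero is regular (Assertion~(2)); for $\beta\in\set{0,\pic}$ every interior critical zero lies on $\set{y=\beta}$, so the isolated common zeros in $M_1\sm\set{y=\beta}$ are regular (Assertion~(1)), the remaining finitely many boundary common zeros being treated by Proposition~\ref{P-es23-4}(3). For (3)--(4), on $\set{P_{\beta}>0}$ the factors $u,v$ share a sign, and since $\cos\theta,\sin\theta>0$ the combination $\Phi_{\beta,\theta}=\cos\theta\,u+\sin\theta\,v$ inherits that sign and cannot vanish, so $\cZ(\Phi_{\beta,\theta})\cap\set{P_{\beta}>0}=\emptyset$. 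Moreover, if $p\in\cZ(\Phi_{\beta,\theta})\cap\cZ(u)$ then $\sin\theta\,v(p)=\Phi_{\beta,\theta}(p)-\cos\theta\,u(p)=0$, whence $v(p)=0$ and $p\in\cZ_{\beta}$; symmetrically for $\cZ(v)$. Thus $\cZ(\Phi_{\beta,\theta})$ can meet $\cZ(\Phi_{\beta,0})$ or $\cZ(\Phi_{\beta,\pib})$ only along $\cZ_{\beta}$, which for $\beta\in\set{0,\pic}$ contains the line $\set{y=\beta}$, explaining the extra clause of (4).

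I expect the main obstacle to be the bookkeeping in (1)--(2): enumerating the grid intersections correctly, disposing of the apparent coincidences on the Dirichlet boundary $\set{x\in\set{0,\pi}}$, and invoking Proposition~\ref{P-es23-2}(2) to confirm that the \emph{only} horizontal line common to both grids is $\set{y=\beta}$ in the two exceptional cases. Once the combinatorics of $\cZ_{\beta}$ is pinned down, the regularity and sign statements in (2)--(4) follow almost formally from Propositions~\ref{P-es23-2} and~\ref{P-es23-4} and the linearity of $\theta\mapsto\Phi_{\beta,\theta}$.
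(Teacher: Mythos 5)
Your proposal is correct, and it is essentially the argument the paper has in mind: the paper declares the proof of Proposition~\ref{P-es23-6} ``clear'' and merely points to the checkerboard Figure~\ref{F-es23-ex-checker}, while the intended justification is exactly your decomposition $\Phi_{\beta,\theta}=\cos\theta\,u+\sin\theta\,v$ with $\cos\theta,\sin\theta>0$, the explicit grids $\cZ(u)$, $\cZ(v)$, and the appeal to Propositions~\ref{P-es23-2} and~\ref{P-es23-4} for the coincident-line and regularity statements. Your write-up supplies the bookkeeping the authors omitted (including the correct disposal of the spurious coincidences on $\set{x\in\set{0,\pi}}$ via the definition of the nodal set as the closure of interior zeros), so nothing further is needed.
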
%

This proposition, whose proof is clear, is best understood by looking at Figure~\ref{F-es23-ex-checker} (see also Figures~\ref{F-es23-gc-inf} and \ref{F-es23-sc4-inf}).\medskip

\begin{figure}[h]
\centering
\begin{subfigure}[t]{.35\textwidth}
\centering
\includegraphics[width=\linewidth]{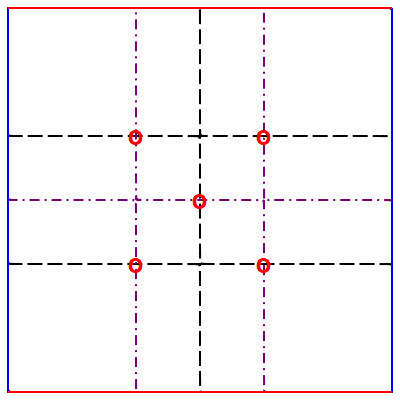}
\caption{Common zeros}
\end{subfigure}
\begin{subfigure}[t]{.36\textwidth}
\centering
\includegraphics[width=\linewidth]{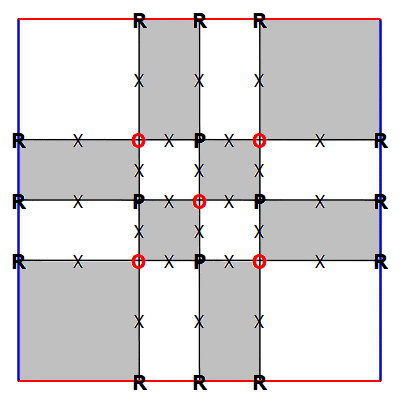}
\caption{Prohibitions}
\end{subfigure}
\caption{The checkerboards for $\beta=0$ and $\theta \in (0,\pib)$}\label{F-es23-ex-checker}
\end{figure}

In Figure~\ref{F-es23-ex-checker}(A), the nodal lines of $\Phi_{\beta,0}$ appear as dashed black lines;  the nodal lines of $\Phi_{\beta,\pib}$ appear as dot/dashed purple lines; the lines $\set{y=0}$ and $\set{y=\pi}$ are common to both functions and appear in red. The points of $\cZ_{\beta}\sm \set{y=0}$ are marked ``O'' (in red). The ``checkerboard''  in Figure~(B) displays the prohibitions: the nodal set $\cZ(\Phi_{\beta,\theta})$ cannot visit the domains in gray, and cannot cross the segments marked with ``X''. The ``P'' denote points through which the nodal set cannot pass. The points marked ``R'' denote regular points (in particular, the nodal set cannot hit the boundary at regular points). We will use similar checkerboards in Subsections~\ref{SS-sc23-6} and \ref{SS-sc23-34}, to describe the nodal sets, using Propositions~\ref{P-es23-2}, \ref{P-es23-4}, \ref{P-es23-6} and Lemma~\ref{L-es23-4}

\begin{notation}\label{N-es23-2}
Given a bounded domain $\Omega$, let $\delta(\Omega)$ denote its smallest Dirichlet eigenvalue.
\end{notation}%

Let $\Phi \in \cE_{[2,3]} = \cE_{\lambda_7}$ be any eigenfunction, and let $\Omega$ be any nodal domain of  $\Phi$. Then, it is well-known that $\delta(\Omega) = \lambdah(2,3) = 13$. Clearly, any nodal domain of the function $P_{\beta}$ is an open set contained in some nodal domain of $\Phi_{\beta,0}$ or $\Phi_{\beta,\pib}$.

\begin{lemma}\label{L-es23-4}
For $\beta \in [0,\pic]$ and $\theta \in (0,\pib)$, no nodal domain $\Omega_P$ of the function $P_{\beta}$ can contain a closed nodal line of $\Phi_{\beta,\theta}$, or a nodal line whose end points are located on $\Omega \cap \partial M_1$.
\end{lemma}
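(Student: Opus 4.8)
The plan is to argue by contradiction, using strict domain monotonicity of the first Dirichlet eigenvalue together with the fact recalled just above the lemma: every nodal domain of an eigenfunction in $\cE_{\lambda_7}$ has smallest Dirichlet eigenvalue exactly $\lambdah(2,3)=13$. Since $\Phi_{\beta,0}=\sin(2x)\sin(3y)$ and $\Phi_{\beta,\pib}=\sin(3x)\sin(2y+\beta)$ both belong to $\cE_{\lambda_7}$, and since $P_{\beta}=\Phi_{\beta,0}\,\Phi_{\beta,\pib}$, each nodal domain $\Omega_0$ of $\Phi_{\beta,0}$ or $\Phi_{\beta,\pib}$ satisfies $\delta(\Omega_0)=13$, and by Proposition~\ref{P-es23-6} every nodal domain $\Omega_P$ of $P_{\beta}$ is contained in such an $\Omega_0$. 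A preliminary observation I would record is that, from the explicit nodal sets of $\Phi_{\beta,0}$ and $\Phi_{\beta,\pib}$ (see Figure~\ref{F-es23-ex-checker}), these domains $\Omega_0$ are topological disks; this is what legitimizes the separation arguments below on the non-orientable surface $M_1$.

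Fix $\Omega_P$ and the domain $\Omega_0 \supseteq \Omega_P$ as above. Suppose first that $\cZ(\Phi_{\beta,\theta})$ contains a closed curve $\gamma\subset\Omega_P$. Since $\Omega_0$ is a disk and $\gamma$ is a Jordan curve strictly interior to it, $\gamma$ bounds an open region $\omega$ with $\overline{\omega}\subset\Omega_0$ and $\partial\omega=\gamma\subset\cZ(\Phi_{\beta,\theta})$. Suppose instead that $\cZ(\Phi_{\beta,\theta})$ contains an arc $\gamma\subset\overline{\Omega_P}$ whose interior lies in $\Omega_P$ and whose endpoints lie on $\Omega_P\cap\partial M_1$. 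Then $\gamma$, together with a subarc $\sigma$ of $\partial M_1$ joining its endpoints, bounds an open region $\omega\subseteq\Omega_P$ with $\partial\omega\subseteq\gamma\cup\sigma$; here $\Phi_{\beta,\theta}$ vanishes on $\gamma$ (nodal) and on $\sigma$ (Dirichlet condition), so in both cases $\Phi_{\beta,\theta}$ vanishes on the whole of $\partial\omega$.

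In either case the restriction of $\Phi_{\beta,\theta}$ to $\omega$ is a nonzero element of $H^1_0(\omega)$ solving $-\Delta\Phi_{\beta,\theta}=13\,\Phi_{\beta,\theta}$, so $13$ lies in the Dirichlet spectrum of $\omega$ and hence $\delta(\omega)\le 13$. On the other hand, the bounding curve $\gamma$ stays inside $\Omega_P$ and therefore away from the interior nodal lines forming $\partial\Omega_0$, so $\omega$ is a \emph{proper} open subset of $\Omega_0$ (the set $\Omega_0\sm\overline{\omega}$ has nonempty interior). Strict domain monotonicity of the first Dirichlet eigenvalue then yields $\delta(\omega)>\delta(\Omega_0)=13$, contradicting $\delta(\omega)\le 13$. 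This contradiction proves the lemma.

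The only genuinely delicate point, which I would treat with care, is the topological step: on the Möbius strip a closed curve need not separate the surface (the soul is a non-separating circle), so ``the region bounded by $\gamma$'' is meaningful only because $\gamma$ is confined to a simply connected nodal domain $\Omega_0$ of $\Phi_{\beta,0}$ or $\Phi_{\beta,\pib}$. Establishing that these domains are disks — equivalently, that the checkerboard cells of Figure~\ref{F-es23-ex-checker} are honest rectangles in $\cR_1$ separated from one another by the common nodal lines, including the identified circle $\set{y=0}\sim\set{y=\pi}$ — is therefore the crux; once it is in place, everything else is the standard Faber--Krahn/monotonicity comparison.
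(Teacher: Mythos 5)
Your proof is correct and follows essentially the same route as the paper's: both argue by contradiction that such a curve would bound a subdomain $\omega$ with $\delta(\omega)=\lambdah(2,3)$ (you use the slightly weaker $\delta(\omega)\le \lambdah(2,3)$ via $\Phi_{\beta,\theta}\in H^1_0(\omega)$, which suffices), while strict domain monotonicity forces $\delta(\omega)>\lambdah(2,3)$ because $\omega$ is properly contained in a nodal domain of $\Phi_{\beta,0}$ or $\Phi_{\beta,\pib}$, whose first Dirichlet eigenvalue equals $13$. Your explicit verification that these nodal domains are topological disks---so that a closed curve inside them genuinely bounds a region even though closed curves in $M_1$ need not separate---is a sound refinement of a point the paper leaves implicit, not a different argument.
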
%

\pf Indeed, such a curve would bound a nodal domain $\omega$ of $\Phi_{\beta,\theta}$, strictly contained in $\Omega_P$. On the one hand, by domain monotonicity of Dirichlet eigenvalues, we would have $\delta(\omega) > \delta(\Omega_P) \ge \lambdah(2,3)$ (the second inequality follows from the fact that the nodal domains of $P_{\beta}$ are contained in nodal domains of the functions $\Phi_{\beta,0}$ or $\Phi_{\beta,\pib}$).  On the other hand, we have $\delta(\omega) = \lambdah(2,3)$, a contradiction. \hfill \qed

\subsection{Analysis of the functions $f(\beta,y)$ and $g(\beta,y)$}\label{SS-es23-f-g}

Recall the expression of the function $f(\beta,y)$ defined in \eqref{E-es23-108f}. For $y \in (0,\pic) \cup (\pic,\pit) \cup (\pit,\pi)$,
\begin{equation*}
f(\beta,y) := \frac 32\, \frac{\sin(2y+\beta)}{\sin(3y)}.
\end{equation*}

We have,
\begin{equation*}
\partial_y f(\beta,y) = \frac{3}{2\sin^2(3y)}\, g(\beta,y),
\end{equation*}
where
\begin{equation*}
g(\beta,y) = 2\sin(3y) \, \cos(2y+\beta) - 3 \cos(3y) \, \sin(2y+\beta).
\end{equation*}
was defined in \eqref{E-es23-108g}. Expanding $g(\beta,y)$, and using the identity $\frac{1}{\sin^2(y)}= 1+\cot^2(y)$, we obtain
\begin{equation*}
g(\beta,y) = \sin^5(y) \left\lbrace  2 \cos\beta ( 1+5\cot^2(y)) - \sin\beta \cot(y) ( 5 + 3 \cot^4(y)) \right\rbrace.
\end{equation*}

It follows that
\begin{equation}\label{E-es23-206}
g(\beta,y) = 0 \text{~and~}y \in (0,\pi) \Leftrightarrow \cot\beta = \frac{1}{2}\,
\frac{\cot(y)\left( 5 + 3 \cot^4(y) \right)}{1+5\cot^2(y)}.
\end{equation}

\begin{lemma}\label{L-es23-202}
The function,
\begin{equation}\label{E-es23-208}
h(t) := \frac{1}{2}\, \frac{t(5+3t^4)}{1+5t^2},
\end{equation}
is an increasing bijection from $\R$ to $\R$. As a consequence,
the function
\begin{equation}\label{E-es23-210}
y \mapsto  \arccot \left(h(\cot y)\right)
\end{equation}
is an increasing bijection from $(0,\pi)$ to $(0,\pi)$. Furthermore,
\begin{equation}\label{E-es23-210a}
\begin{array}{|c|c|c|c|c|c|c|c|}
\hline &&&&&&&\\
t&-\infty &-\frac{1}{\sqrt{3}} & 0 & \frac{1}{\sqrt{3}} & 1 &\sqrt{3}&\infty  \\[5pt]
\hline &&&&&&&\\
h(t)&-\infty &-\frac{1}{\sqrt{3}}&0&\frac{1}{\sqrt{3}}&\frac 23&  \sqrt{3}&\infty\\[5pt]
\hline \hline&&&&&&&\\
y &0 &\frac{\pi}{6}&\pid &\pic &\pib &\pit &\pi\\[5pt]
\hline &&&&&&&\\
\arccot \left(h(\cot y)\right)&0 &\frac{\pi}{6}&\arccot(\frac 23)&\pic&\pib&\pit&\pi\\[5pt]
\hline
\end{array}%
\end{equation}
\end{lemma}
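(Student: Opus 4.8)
The plan is to establish the monotonicity of $h$ by a direct computation of its derivative, then to deduce the statement about the composite map $y \mapsto \arccot\bigl(h(\cot y)\bigr)$ from elementary monotonicity of compositions, and finally to fill in the table by direct evaluation.

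First I would record that $h(t) = \tfrac{1}{2}\,\frac{3t^5+5t}{5t^2+1}$ is a well-defined odd rational function on all of $\R$ (the denominator $5t^2+1$ never vanishes), continuous, with $h(t) \sim \tfrac{3}{10}t^3 \to \pm\infty$ as $t \to \pm\infty$. Differentiating the quotient gives
$$
h'(t) = \frac{1}{2}\,\frac{(15t^4+5)(5t^2+1) - (3t^5+5t)(10t)}{(5t^2+1)^2} = \frac{1}{2}\,\frac{45t^6 + 15t^4 - 25t^2 + 5}{(5t^2+1)^2}.
$$
The key algebraic step is the factorization of the numerator. Setting $u = t^2$, the cubic $45u^3 + 15u^2 - 25u + 5$ has a double root at $u = \tfrac13$ and a simple root at $u = -1$, hence equals $45\bigl(u-\tfrac13\bigr)^2(u+1) = 5(3u-1)^2(u+1)$. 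Therefore
$$
h'(t) = \frac{5\,(3t^2-1)^2\,(t^2+1)}{2\,(5t^2+1)^2} \ge 0,
$$
with equality only at the two isolated points $t = \pm \tfrac{1}{\sqrt 3}$. Since $h'$ is strictly positive except at isolated points, $h$ is strictly increasing; combined with continuity and the limits at $\pm\infty$, this shows that $h$ is an increasing bijection of $\R$ onto $\R$.

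For the consequence, I would use that $y \mapsto \cot y$ is a decreasing bijection of $(0,\pi)$ onto $\R$, and that $\arccot$ (with range $(0,\pi)$) is a decreasing bijection of $\R$ onto $(0,\pi)$. Thus $\arccot \circ\, h \circ \cot$ is a composition of bijections, and its monotonicity type is $(\text{decreasing})\circ(\text{increasing})\circ(\text{decreasing}) = \text{increasing}$, yielding the asserted increasing bijection of $(0,\pi)$ onto itself.

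The table is then completed by direct evaluation: oddness of $h$ produces the symmetric entries, and one checks $h\bigl(\tfrac{1}{\sqrt3}\bigr) = \tfrac{1}{\sqrt3}$, $h(1) = \tfrac23$, and $h(\sqrt3) = \sqrt3$; the values of $\arccot\bigl(h(\cot y)\bigr)$ follow by reading off $\cot y$ at the listed $y$'s and applying these. I expect no genuine obstacle: the one nontrivial point is the clean factorization of the numerator of $h'$, and in particular the observation that the \emph{double} root at $t^2 = \tfrac13$ makes $h'$ touch $0$ without changing sign, so that strict monotonicity of $h$ is preserved.
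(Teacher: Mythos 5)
Your proof is correct and takes essentially the same approach as the paper: the paper's entire proof consists of stating the derivative formula $h'(t) = \frac{5}{2}\,\frac{(3t^2-1)^2(t^2+1)}{(1+5t^2)^2}$, which matches your factorized computation exactly, and then asserting that the lemma follows. The details you supply---the double root at $t^2=\tfrac13$ making $h'$ vanish without a sign change, the monotonicity bookkeeping for $\arccot \circ\, h \circ \cot$, and the explicit table evaluations---are precisely the steps the paper leaves implicit.
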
%

\pf A straightforward computation gives
\begin{equation*}
h'(t) = \frac{5}{2}\, \frac{(3t^2-1)^2(t^2+1)}{(1+5t^2)^2}\,.
\end{equation*}
The lemma follows. \hfill\qed\medskip

As a consequence, given $\beta \in (0,\pic)$, there exists a unique $y_{\beta} \in (0,\pic)$, such that $\beta = \arccot \left(h(\cot y_{\beta})\right)$. It follows that, given  $\beta \in (0,\pic)$, the function $y \mapsto f(\beta,y)$ varies as indicated \eqref{E-es23-T1}.

\begin{equation}\label{E-es23-T1}
\begin{array}{c|cccccccccccccc}
y & 0 & \vo{3} & y_{\beta} & \vo{5} & \vo{6} & \pic & \vo{8} & \vo{9} &&\pit& \vo{8} & \vo{9} & ~~\pi\\[5pt]
\hline\\
\partial_y f(\beta,y) & \vo{2} & - & 0 & + & \vo{6} & \| & \vo{8} & + & \vo{10} &\| &&+&\\[5pt]
\hline\\
 & \infty & & & & \infty & \| &  & & \infty &\|&&& \infty\\[5pt]
f(\beta,y) &  & \searrow& & \nearrow  &  & \| &  &\nearrow  && \|&&\nearrow&&\\[5pt]
& & & m_{\beta} & & & \| & -\infty & &&\|&-\infty&&\\[5pt]
\hline
\end{array}%
\end{equation}
where
\begin{equation}\label{E-es23-20T1}
m_{\beta} := \frac 32 \, \frac{\sin(2 y_{\beta}+\beta)}{\sin(3y_{\beta})}\,.
\end{equation}

\begin{figure}[!ht]
\centering
\includegraphics[scale=0.4]{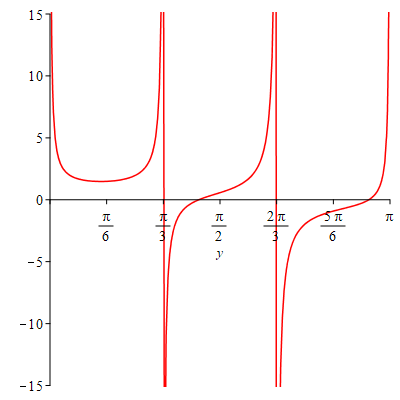}
\caption{Graph of the function $f(\beta,\cdot)$, for $\beta=\frac{\pi}{8}$}\label{F-23-graph-f}
\end{figure}

From Lemma~\ref{L-es23-202}, the definition of $g(\beta,y)$ in \eqref{E-es23-108g}, and the uniqueness of the zero of the function $y \mapsto g(\beta,y)$, we deduce that
\begin{equation}\label{E-es23-212}
\left\{
\begin{array}{l}
\beta = \pif \Rightarrow y_{\beta} = \pif\,,\\[5pt]
\beta > \pif \Rightarrow \beta > y_{\beta} > \pif\,,\\[5pt]
\beta < \pif \Rightarrow \beta < y_{\beta} < \pif\,,
\end{array}
\right.
\end{equation}
and from the definition of $g(\beta,y)$, see \eqref{E-es23-108g},
\begin{equation}\label{E-es23-212a}
\left\{
\begin{array}{l}
y_{\pic- \beta} = \pic - y_{\beta}\,,\\[5pt]
y_{\pif} = \pif\,.
\end{array}
\right.
\end{equation}

It turns out that $y$ and $\beta$ play a symmetric role. Instead of fixing $\beta$, we could fix some $\eta \in (0,\pic)$. Then, there exists a unique $\beta_{\eta} = \arccot(h(\cot \eta))$, and the function $y \mapsto f(\beta_{\eta},y)$, has an infimum $m_{\beta_{\eta}}$ at $\eta$. \medskip

The relation $\cot(\beta) = h\left( \cot(y) \right)$ can also be rewritten as,
\begin{equation}\label{E-es23-206T}
\tan(\beta) = \ell\left( \tan(y) \right),
\end{equation}
where
\begin{equation}\label{E-es23-206L}
\ell(t) := \frac{2 t^3 (5+t^2)}{3+5t^4}.
\end{equation}

We conclude that
\begin{equation}\label{E-es23-212b}
\beta_{\eta} \sim \frac{10}{3}\, \eta^3, \text{~as~} \eta \to 0.
\end{equation}

From \eqref{E-es23-20T1}, \eqref{E-es23-210} and \eqref{E-es23-212}, we conclude that the function $\beta \mapsto m_{\beta}$ satisfies,
\begin{equation}\label{E-es23-214}
\left\{
\begin{array}{l}
m_{\pic - \beta} = m_{\beta}\,,\\[5pt]
m_{\pif} = \frac 32\,.\\[5pt]
\end{array}%
\right.
\end{equation}

Writing $m_{\eta} =f(\beta_{\eta},\eta)$ and using \eqref{E-es23-212b}, we obtain that
\begin{equation}\label{E-es23-214b}
\lim_{\eta \to 0}m_{\eta} = 1.
\end{equation}

Finally from the definitions of $m_{\beta} = f(\beta,y_{\beta})$ and of $y_{\beta}$, and using \eqref{E-es23-212}, we conclude that
\begin{equation}\label{E-es23-216}
\left\{
\begin{array}{l}
m'_{\beta} > 0, \text{~for~} \beta < \pif\,,\\[5pt]
m'_{\beta} <  0, \text{~for~} \beta > \pif\,,\\[5pt]
1 \le m_{\beta} \le \frac 32\,.
\end{array}%
\right.
\end{equation}
Furthermore, $m_{\beta}=1$ if and only if $\beta = 0$ or $\beta = \pib$, and
$m_{\beta}=\frac 32$ if and only if $\beta = \pif$.\medskip

Define the value $\theta_{\beta}$ by the relation
\begin{equation}\label{E-es23-220}
\cot(\theta_{\beta}) = \frac 32 \, \frac{\sin(2y_{\beta}+\beta)}{\sin(3y_{\beta})} = \frac 32\, \frac{\sin(2y_{\beta}+\arccot(h(\cot y_{\beta})))}{\sin(3y_{\beta}))}\,.
\end{equation}

Equivalently, the value $\theta_{\beta}$ is defined by the equations
\begin{equation}\label{E-es23-222}
\left\{
\begin{array}{l}
\cot(\beta) = h\left( \cot y_{\beta} \right)\,,\\[5pt]
2 \cos\theta_{\beta} \, \sin(3y_{\beta}) - 3\, \sin\theta_{\beta} \, \sin(2y_{\beta}+\beta) = 0\,.
\end{array}
\right.
\end{equation}

From the definition of $f$ and \eqref{E-es23-214}, we conclude that
\begin{equation}\label{E-es23-224}
\left\{
\begin{array}{l}
\theta_{\pic - \beta} = \theta_{\beta}\,,\\[5pt]
\theta_{\pif} = \arccot(3/2)\,,\\[5pt]
\lim_{\beta \to 0}\theta_{\beta} = \lim_{\beta \to \pib}\theta_{\beta} = \pid\,.
\end{array}%
\right.
\end{equation}
The graph of $\beta \mapsto \theta_{\beta}$ is symmetric with respect to $\beta=\pif$, the function decreases from $\pid$ to $\arccot(3/2) \approx 0.588003$ as $\beta$ increases from $0$ to $\pif$, and then increases to $\pid$ when $\beta$ continues increasing to $\pic$.\medskip

\begin{remark}\label{R-es23-202}
As pointed out above, when looking at the set $\set{g(\beta,y)=0}$, $\beta$ and $y$ play symmetric roles. Given $\beta$, there is a unique zero $y_{\beta}$, and conversely, given $y$, there is a unique zero $\beta_{y}$. In order to visualize the functions $m_{\beta}$ and $\theta_{\beta}$, it is simpler to look at $\beta$ as a function of $y$, and to visualize $m_{\beta_y}$ and $\theta_{\beta_y}$ as functions of $y$. Figure~\ref{F-fig23-ymt} displays the graphs of the functions $\beta_{y}=\arccot \left(h(\cot y)\right)$, $m_{\beta_{y}}$ and $\theta_{\beta_{y}}$.
\end{remark}%

\begin{figure}[ht]
\centering
\begin{subfigure}[t]{.30\textwidth}
\centering
\includegraphics[width=\linewidth]{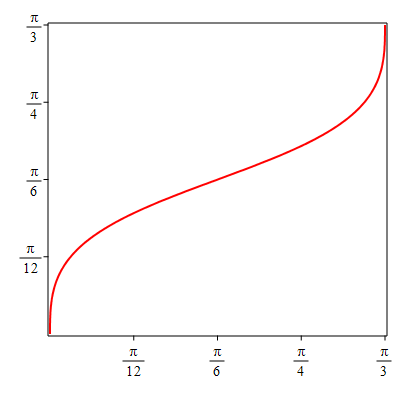}
\caption{$\beta_y = \arccot \left(h(\cot y)\right)$}
\end{subfigure}
\begin{subfigure}[t]{.30\textwidth}
\centering
\includegraphics[width=\linewidth]{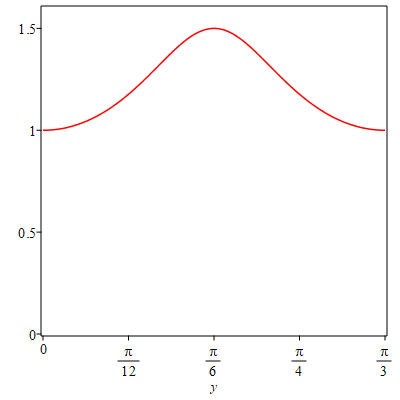}
\caption{$m_{\beta_y}$}
\end{subfigure}
\begin{subfigure}[t]{.30\textwidth}
\centering
\includegraphics[width=\linewidth]{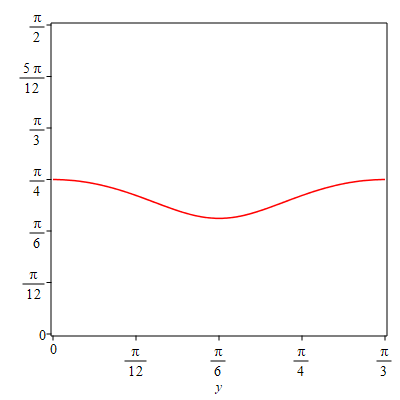}
\caption{$\theta_{\beta_y}$}
\end{subfigure}
\caption{Graphs as functions of $y$}\label{F-fig23-ymt}
\end{figure}

\FloatBarrier

\subsection{The general case $\beta \in (0,\pic)$ and $\theta \in (0,\pib)$}\label{SS-sc23-6}

We write the eigenfunction as
\begin{equation}\label{E-es23gc-2}
\begin{array}{ll}
\Phi_{\beta,\theta}(x,y)&= \cos\theta \sin(2x) \sin(3y) + \sin\theta \sin(3x) \sin(2y+\beta),\\[5pt]
&= \sin(x) \, \Psi_{\beta,\theta}(x,y), \text{~where},\\[5pt]
\Psi_{\beta,\theta}(x,y)&= 2\cos\theta \cos(x) \sin(3y) \\[5pt]
&~~~~+ \sin\theta (4\cos^2(x)-1) \sin(2y+\beta),
\end{array}%
\end{equation}
and we also use the abbreviation $\Phi$ or $\Psi$ to simplify notation.\medskip

The checkerboards associated with the function $P_{\beta}$,
\begin{equation*}
P_{\beta}(x,y) = \sin(2x) \sin(3y) \, \sin(3x) \sin(2y+\beta)\,,
\end{equation*}
are given in  Figure~\ref{F-es23-gc-inf}. According to Proposition~\ref{P-es23-6},
the nodal set $\cZ(\Phi)$ is contained in the set $\set{P_{\beta}\le 0}$ (the rectangles in white). The points marked ``O'' in the figure are the common zeros of the family of functions $\Phi_{\beta,\theta}$, for a fixed $\beta \in (0,\pic)$, and all $\theta \in (0,\pib)$. The function $\Phi$ does not vanish at the points marked ``P'', so that $\cZ(\Phi)$ does not pass through these points. The points marked ``R'' are regular points of the boundary, the partial derivative $\partial_x\Phi$ does not vanish at these points.

\begin{figure}[h]
\centering
\begin{subfigure}[t]{.35\textwidth}
\centering
\includegraphics[width=\linewidth]{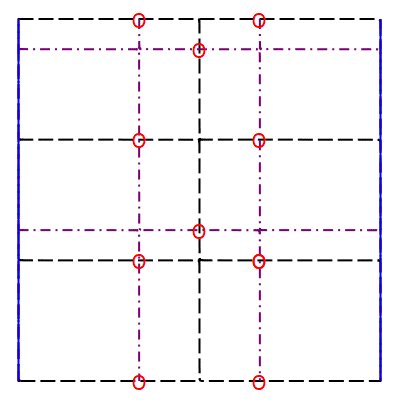}
\caption{Grid with common zeros}
\end{subfigure}
\begin{subfigure}[t]{.352\textwidth}
\centering
\includegraphics[width=\linewidth]{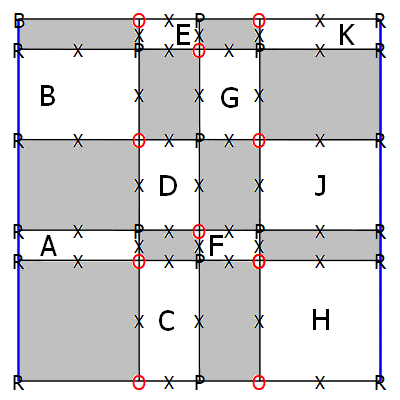}
\caption{Grid with prohibitions}
\end{subfigure}
\caption{Localization of the nodal set $\cZ(\Phi_{\beta,\theta})$}\label{F-es23-gc-inf}
\end{figure}

\FloatBarrier

\begin{properties}\label{Ps-es23gc-2}The following properties hold.
\begin{enumerate}
  \item For $y_0 \in \set{0,\pic, \pit, \pi}$, the function $\Psi(x,y_0)$ only vanishes at the points $x=\pic$ and $x=\pit$. The nodal set $\cZ(\Phi)$ cannot hit the corresponding horizontal lines elsewhere.
  \item For $y_0 \in \set{\pib - \frac{\beta}{2}, \pi-\frac{\beta}{2}}$,  the function $\Psi(x,y_0)$ only vanishes at the point $x=\pib$. The nodal set $\cZ(\Phi)$ cannot hit the corresponding horizontal lines elsewhere.
  \item By \eqref{E-es23-110x} and \eqref{E-es23-110y}, at the common zeros marked ``O'', the tangent to the nodal line is oriented SouthEast--NorthWest, pointing to the white domains, except on the bottom line where the orientation is SouthWest--NorthEast.
  \item According to Subsection~\ref{SS-es23-f-g} and \eqref{E-es23-T1}, the function $\Psi(0,y)$ has precisely one zero in each interval $(\pic,\pib-\frac{\beta}{2})$ and $(\pit,\pi-\frac{\beta}{2})$. The nodal set $\cZ(\Phi)$ hits the boundary at a critical zero of order $2$.
  \item According to Subsection~\ref{SS-es23-f-g} and \eqref{E-es23-T1}, the function $\Psi(\pi,y)$ has precisely one zero in each interval  $(\pib-\frac{\beta}{2},\pit)$ and $(\pi-\frac{\beta}{2},\pi)$.  The nodal set $\cZ(\Phi)$ hits the boundary at a critical zero of order $2$. In the interval $(0,\pic)$, the function $\Psi(\pi,y)$ has 0, 1, or 2 zeros, depending on whether $\cot\theta$ is less than, equal to, or larger than $m_{\beta}$ (defined in Equation~\eqref{E-es23-20T1}). In the first case, the nodal set does not hit the boundary. In the second case, it hits the boundary at a critical zero of order $3$. In the third case, it hits the boundary at two critical zeros of order $2$.
\end{enumerate}
\end{properties}%

From these properties, we deduce the general aspect of the nodal set $\cZ(\Phi)$, with reference to the right hand picture in Figure~\ref{F-es23-gc-inf}.

\begin{description}
  \item[Column~1] The nodal set in the rectangles A and B is a simple curve with end points a boundary critical zero of order $2$ and the common zero marked ``O''.
  \item[Column~2] The nodal set in the rectangles C, D and E is a simple curve with end points the common zeros marked ``O''.
  \item[Column~3] The nodal set in the rectangles F and G is a simple curve with end points the common zeros marked ``O''.
  \item[Column~4] In the rectangles J and K, the nodal set is a simple curve with end points the common zero marked ``O'' and a boundary critical zero of order $2$.
      In the rectangle H, there are three different cases.
      \begin{description}
        \item[$0 < \theta  < \theta _{\beta}$] The nodal set is a simple curve joining the two common zeros marked ``O''.
        \item[$\theta = \theta _{\beta}$~~~~~] The nodal set consists in two arcs entering the rectangle at the common zeros marked ``O'', and meeting at the boundary critical zero $(\pi,y_{\beta})$, of order $3$.
        \item[$\theta _{\beta} < \theta < \pib$] The nodal set consists in two disjoint arcs entering the rectangle at the common zeros marked ``O'', and hitting the boundary at two distinct critical zeros, each of order $2$.
      \end{description}
\end{description}

\begin{remark}\label{R-es23gc-4}
One can localize the position of the nodal curves by looking at their intersections with the lines $\set{x=x_0}$, i.e., at the zeros of the function $\Psi(x_0,y)$, and by making use of \eqref{E-es23-T1}.
\end{remark}%

One can summarize the preceding discussion as follows, see Figure~\ref{F-es23-gc}. Fix some $\beta \in (0,\pic)$. Let $y_{\beta} \in (0,\pic)$ and  $\theta_{\beta} \in (0,\pib)$, be defined by \eqref{E-es23-206} and \eqref{E-es23-220} respectively.\smallskip

The nodal sets of the family $\{\Phi_{\beta,\theta}\}$, with $\beta$ fixed and $\theta \in (0,\pib)$ present a bifurcation at $\theta = \theta_{\beta}$, as illustrated by Figure~\ref{F-es23-gc} (in this figure, $\beta = \pid$). \smallskip

When $\theta = \theta_{\beta}$, Figure~(B), the nodal set consists in three simple regular curves which do not intersect in $M_1$. Two of these curves hit the boundary at the critical zero $(\pi,y_{\beta})$, of order three. One of them hits the boundary part $\set{x=0}$; the other hits the boundary part $\set{x=\pi}\times (\pic,\pit)$. The two end points of the third curve belong to $\set{x=0}$ and $\set{x=\pi}$ respectively. Except for the afore mentioned boundary critical zero of order $3$, the boundary critical zeros have order $2$. There is no interior critical zero.\smallskip

When $0 < \theta < \theta_{\beta}$, Figure~(A), the nodal set consists in three non-intersecting simple regular curves. Two of them have one end point on each part of the boundary. The third has two distinct end points on $\set{x=\pi}$. There are no interior critical zero. The boundary critical zeros all have order $2$.\smallskip

When $\theta_{\beta} < \theta < \pib$, Figure~(C), the nodal set consists in two non-intersecting simple regular curves. They both have one end point on $\set{x=0}$, the other on $\set{x=\pi}$. These end points are critical zeros of order $2$. There are no interior critical zeros. \smallskip

\emph{In all three cases, the number of nodal domains (on $M_1$) is $3$.} \medskip

\newcommand{\twc}{0.75}
\begin{figure}[ht]
\centering
\begin{subfigure}[t]{\twc\textwidth}
\centering
\includegraphics[width=\linewidth]{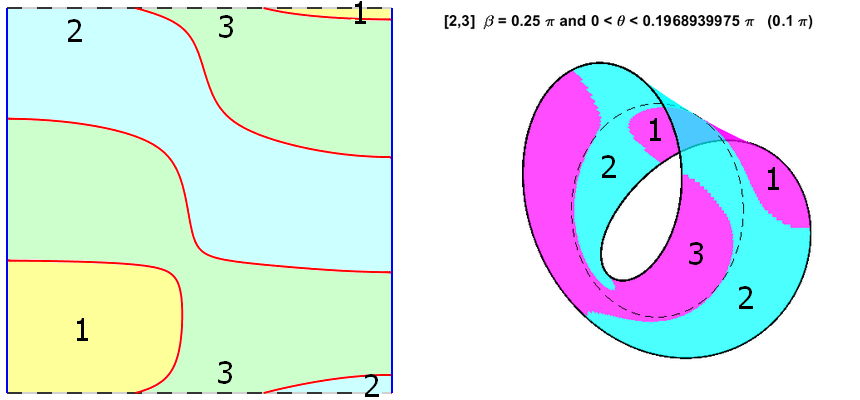}
\caption{$0 < \theta < \theta_{\beta}$}\vspace{2mm}
\end{subfigure}
\begin{subfigure}[t]{\twc\textwidth}
\centering
\includegraphics[width=\linewidth]{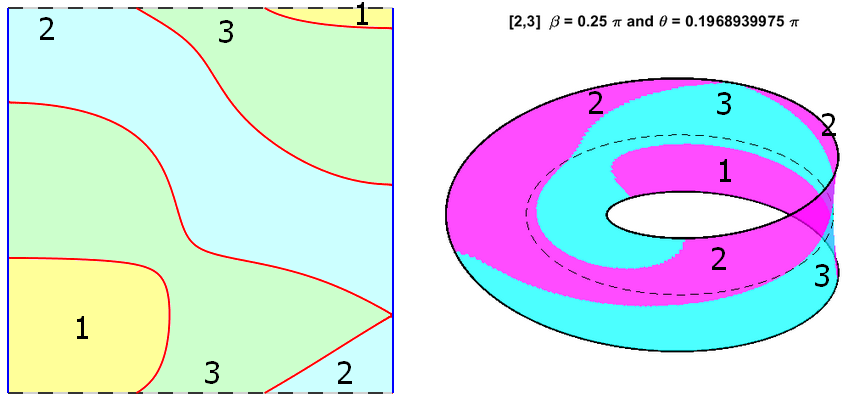}
\caption{$\theta = \theta_{\beta}$}\vspace{2mm}
\end{subfigure}
\begin{subfigure}[t]{\twc\textwidth}
\centering
\includegraphics[width=\linewidth]{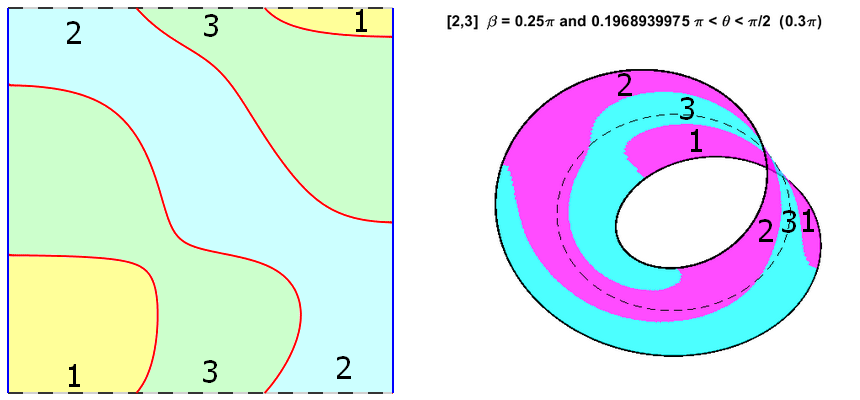}
\caption{$\theta_{\beta} < \theta < \pib$}
\end{subfigure}
 \caption{$\cZ(\Phi_{\beta,\theta})$, with $0 < \beta < \pic$ fixed,
 and $0 < \theta < \pib$}\label{F-es23-gc}
\end{figure}

\FloatBarrier

\subsection{Special values $\beta=0$ and $\beta=\pic$, with $\theta \in (0,\pib)$}\label{SS-sc23-34}

The nodal sets $\cZ(\Phi_{\pic,\theta})$ are nicer to visualize. They are displayed in Figure~\ref{F-es23-sc4}. The functions $\Phi_{0,\theta}$ are Dirichlet eigenfunctions of the square $(0,\pi)^2$. Their nodal sets have already been studied in \cite{Pl}, see also \cite{BH} for more details. They are given in Figure~\ref{F-es23-sc3}. Recall that $\cZ(\Phi_{\pic,\theta}) = \tau_{\pic}\, \cZ(\Phi_{0,\theta})$, see \eqref{E-esmn-es23} and Figure~\ref{F-23-tau-sym}.

\newcommand{\twd}{0.75}
\begin{figure}[!h]
\centering
\begin{subfigure}[t]{\twc\textwidth}
\centering
\includegraphics[width=\linewidth]{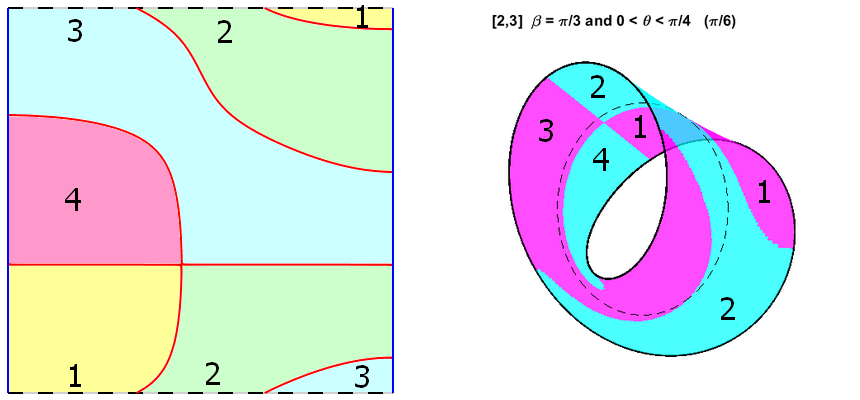}
\caption{$0 < \theta < \pid$}\vspace{2mm}
\end{subfigure}
\begin{subfigure}[t]{\twc\textwidth}
\centering
\includegraphics[width=\linewidth]{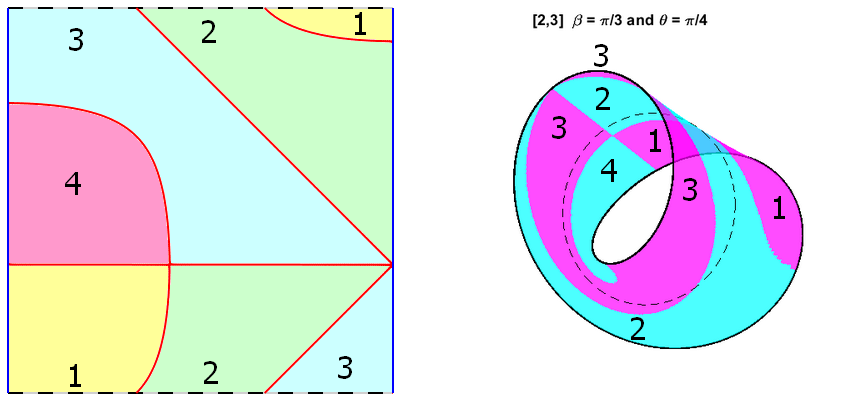}
\caption{$\theta = \pid$}\vspace{2mm}
\end{subfigure}
\begin{subfigure}[t]{\twc\textwidth}
\centering
\includegraphics[width=\linewidth]{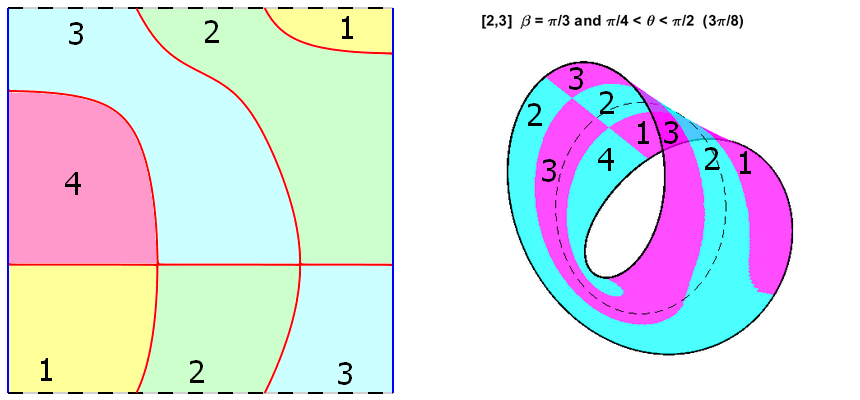}
\caption{$\pid < \theta < \pib$}
\end{subfigure}
 \caption{$\cZ(\Phi_{\pic,\theta})$ for $0 < \theta < \pib$}
 \label{F-es23-sc4}
\end{figure}
\FloatBarrier

\begin{figure}[!h]
\centering
\begin{subfigure}[t]{.30\textwidth}
\centering
\includegraphics[width=\linewidth]{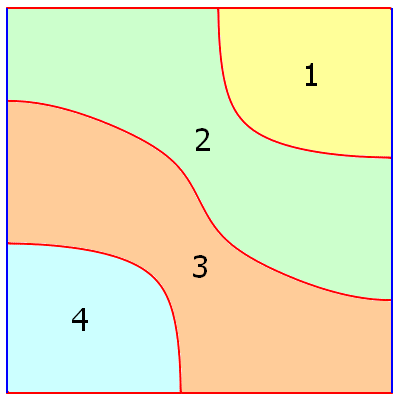}
\caption{$0 < \theta < \pid$}
\end{subfigure}
\begin{subfigure}[t]{.30\textwidth}
\centering
\includegraphics[width=\linewidth]{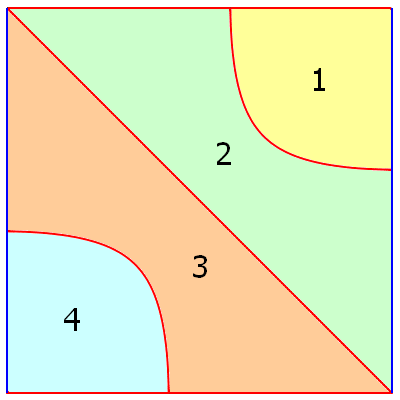}
\caption{$\theta = \pid$}
\end{subfigure}
\begin{subfigure}[t]{.30\textwidth}
\centering
\includegraphics[width=\linewidth]{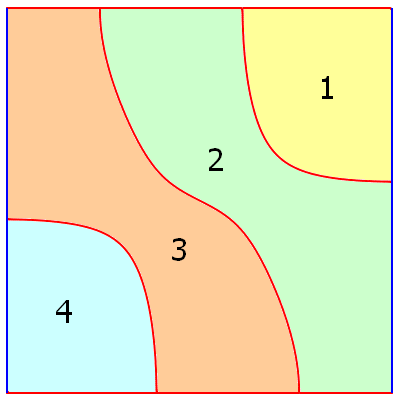}
\caption{$\pid < \theta < \pib$}
\end{subfigure}
\caption{$\cZ(\Phi_{0,\theta})$ for $0 < \theta < \pib$}
\label{F-es23-sc3}
\end{figure}

We summarize the analysis of the nodal sets for $\beta = \pic$. Write
\begin{equation}\label{E-es23-sc4a}
\left\{
\begin{array}{ll}
\Phi_{\pic,\theta}(x,y) & = \cos\theta\, \sin(2x) \, \sin(3y) + \sin\theta \, \sin(3x)\, \sin(2y+\pic)\\[5pt]
& = - 2\, \sin(x) \, \sin(y-\pic) \, \Gamma_{\theta}(x,y-\pic)\,,
\end{array}
\right.
\end{equation}
where
\begin{equation}\label{E-es23-sc4b}
\begin{array}{ll}
\Gamma_{\theta}(x,y-\pic) &= \cos\theta \cos(x) (4\cos^2(y-\pic)-1)\\[5pt]
 &~~~+ \sin\theta (4\cos^2(x)-1) \cos(y-\pic)\,.
\end{array}%
\end{equation}

Consider the checkerboards associated with the function,
\begin{equation}\label{E-es23-sc4c}
\begin{array}{ll}
P_{\pic}(x,y) & = \sin(2x) \sin(3y) \, \sin(3x) \sin(2y+\pic),\\[5pt]
& = \sin(2x) \sin(3(y-\pic)) \sin(3x) \sin(2(y-\pic)),
\end{array}%
\end{equation}
and apply Proposition~\ref{P-es23-6}: the nodal set
$\Phi_{\pic,\theta}$ can only visit the white domains. It passes through the points marked ``O'', does not pass through the points marked ``P''. The points marked ``R'' are regular points at the boundary. The corresponding checkerboard appears in Figure~\ref{F-es23-sc4-inf}.

\begin{figure}[h]
\centering
\begin{subfigure}[t]{.35\textwidth}
\centering
\includegraphics[width=\linewidth]{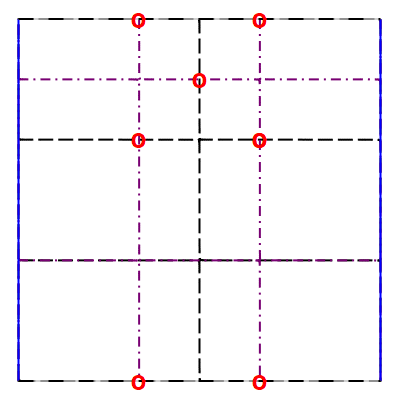}
\caption{Grid with common zeros}
\end{subfigure}
\begin{subfigure}[t]{.35\textwidth}
\centering
\includegraphics[width=\linewidth]{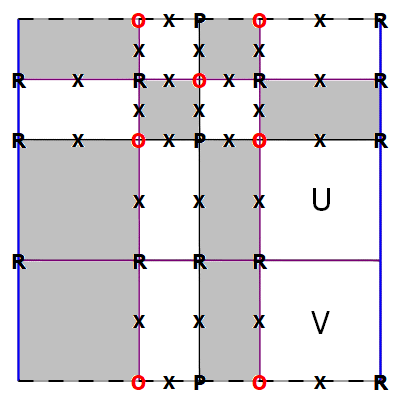}
\caption{Grid with prohibitions}
\end{subfigure}
\caption{Location of the nodal set $\cZ(\Phi_{\pic,\theta})$}
\label{F-es23-sc4-inf}
\end{figure}

Propositions~\ref{P-es23-2}--\ref{P-es23-6} and Lemma~\ref{L-es23-4} determine the nodal set $\cZ(\Phi_{0,\theta})$ except in the squares marked ``U'' and ``V'', where a further analysis is necessary.\medskip

Analyzing the functions $\Gamma_{\theta}(x,0)$ and $\Gamma_{\theta}(\pi,y-\pic)$, one finds that there are three cases:
\begin{enumerate}
  \item when $0 < \theta < \pid$, the nodal line entering ``U'' at $(\pit,\pit)$ does not hit $\set{y=\pic}$, and hits the boundary in $\set{\pi}\times (\pic,\pit)$ and there is a similar nodal line entering ``V'' at $(\pit,0)$;
  \item when $\pid < \theta < \pib$, the nodal line entering ``U'' at $(\pit,\pit)$ crosses $\set{y=\pic}$, does not hit the boundary $\set{\pi}\times (0,\pit)$, and exits $V$ at $(\pit,0)$;
  \item when $\theta = \pid$, the nodal set in $U\cup V$ consists in two line segments, one from $(\pit,\pit)$ to $(\pi,\pic)$, and a symmetric one from $(\pit,0)$ to $(\pi,\pic)$.
\end{enumerate}

In the first two cases, the critical zeros have order $2$; in the third case, the critical zero $(\pi,\pic)$ has order $4$, as indicated in Proposition~\ref{P-es23-4}, Assertion~(3).\medskip

\emph{In the three cases, the eigenfunction $\Phi$ has four nodal domains.}\medskip

Finally, Figure~\ref{F-23-tau-sym} illustrates the relations \eqref{E-esmn-es23}. More precisely, one can write
\begin{equation}\label{E-ef23-tau-sym}
\Phi_{\beta+\pic,\theta}(x,y) =
\left\{
\begin{array}{ll}
-\Phi_{\beta,\theta}(x,y-\pic) &\text{if~} y \in [\pic,\pi]\,,\\[5pt]
-\Phi_{\beta,\theta}(\pi-x,y+\pit) &\text{if~} y \in [0,\pic]\,.
\end{array}
\right.
\end{equation}

To pass from $\cZ(\Phi_{0,\pid})$ (Figure~\ref{F-23-tau-sym}, left) to $\cZ(\Phi_{\pic,\pid}) = \tau_{\pic}\,\cZ(\Phi_{0,\pid})$ (Figure~\ref{F-23-tau-sym}, right), divide the figure on the left along the black dashed horizontal line; translate the lower part upwards by $\pic$; translate the upper part downwards by $\pit$ and apply the symmetry with respect to $\set{x=\pib}$; glue the resulting domains along the horizontal red line. \medskip

The proof of Lemma~\ref{L-es23-2} is complete.\medskip

\begin{figure}[!ht]
\centering
\includegraphics[scale=0.35]{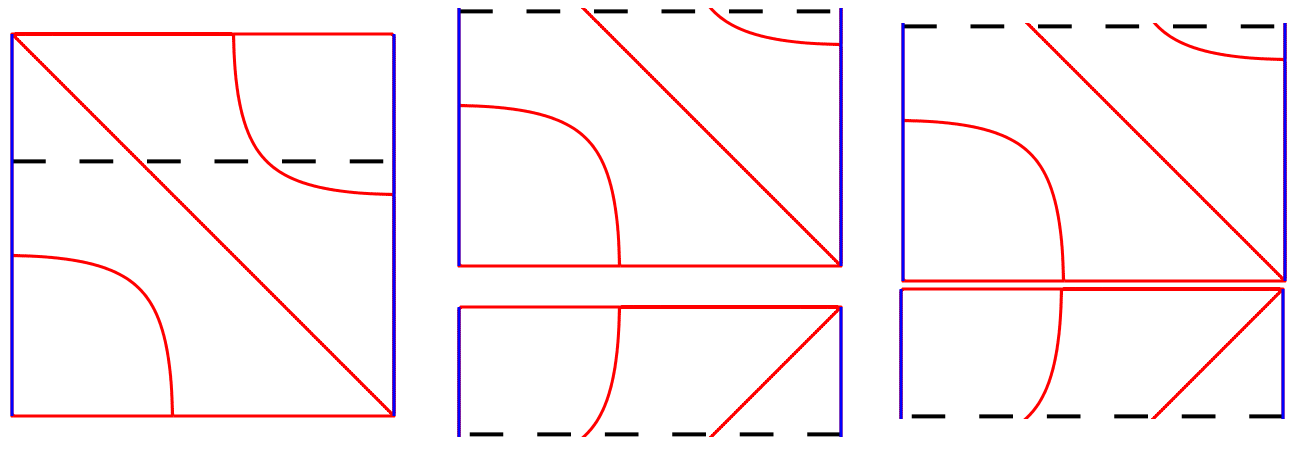}
\caption{From $\cZ(\Phi_{0,\pid})$ to $\cZ(\Phi_{\pic,\pid})$}\label{F-23-tau-sym}
\end{figure}

\FloatBarrier

\section{Isoperimetric Inequality and Faber-Krahn Property}\label{S-isop}

We follow the proof given by C. L\'ena in \cite{Le}, who refers to \cite{BM} and to the older \cite{Pe}.  A key role is played by the isoperimetric inequality in connection with the Faber-Krahn inequality. For this purpose, we use Howards's isoperimetric inequality for the Klein bottle, \cite[Section~7]{HHM}.

\subsection{Isoperimetric inequality}

\begin{theorem}\cite[Theorem 7, case 1]{HHM}
Let $S$ be a flat torus or a Klein bottle with shortest closed geodesic of length $\ell  $ and area $\cA(S)$. Given $0<A<\cA(S)$, the least-perimeter region of area $A$ is a circular disk if $0<A\leq \ell^2/\pi$.
\end{theorem}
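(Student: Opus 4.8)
The plan is to treat this as a standard isoperimetric problem on a compact flat surface and to show that, in the stated area range, the round disk is the unique minimizer. First I would invoke the existence and regularity theory for isoperimetric regions on a compact Riemannian surface (geometric measure theory, in the spirit of Almgren): for every $A$ with $0<A<\cA(S)$ there is a region $\Omega$ of area $A$ whose boundary minimizes perimeter, and $\partial\Omega$ is a smooth embedded $1$-manifold, i.e.\ a finite disjoint union of simple closed curves, each of constant geodesic curvature. Since $S$ is \emph{flat}, a curve of constant geodesic curvature develops in the universal cover $\R^2$ into an arc of a Euclidean circle (a straight line when the curvature vanishes); hence each boundary component is either a closed geodesic or a round circle bounding an embedded Euclidean disk.

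Next I would classify the boundary components topologically. A simple closed curve of \emph{nonzero} constant geodesic curvature that closes up smoothly in $S=\R^2/\Gamma$ must be a full Euclidean circle, because the deck transformations of a flat torus or Klein bottle (translations and glide reflections) preserve directions up to reflection, forcing the development to turn by a multiple of $2\pi$; such a circle is contractible and bounds a disk. A \emph{non-contractible} boundary component must therefore be a closed geodesic, and every such curve has length at least the systole $\ell$. The round disk of area $A$ has perimeter $P_0(A)=2\sqrt{\pi A}$ and radius $r=\sqrt{A/\pi}$; when $A\le \ell^2/\pi$ one has $2r\le 2\ell/\pi<\ell$, so the disk sits inside a metric ball of radius below the injectivity radius $\ell/2$ and is genuinely embedded, i.e.\ it is an admissible competitor.

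The heart of the argument is the perimeter comparison. If $\partial\Omega$ consists only of contractible circles bounding disjoint disks of areas $A_1,\dots,A_k$ with $\sum_i A_i=A$, then by subadditivity of the square root $\sum_i 2\sqrt{\pi A_i}\ge 2\sqrt{\pi A}=P_0(A)$, with equality only for a single disk, so among disk-type competitors the single round disk wins. If instead $\partial\Omega$ contains a non-contractible component, then, because the boundary of a region is null-homologous, it must contain at least two essential geodesics (or bound a one-sided neighbourhood on the Klein bottle, whose boundary again has length at least $2\ell$), giving perimeter at least $2\ell$. The threshold is sharp precisely here: $2\ell\ge 2\sqrt{\pi A}$ exactly when $A\le \ell^2/\pi$. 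Combining the two cases, any minimizer in this range has perimeter $\ge P_0(A)$; since the embedded round disk realizes $P_0(A)$, it is the minimizer, uniqueness following from the rigidity in the subadditivity step together with the strict inequality $2\ell>2\sqrt{\pi A}$ for $A<\ell^2/\pi$.

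The main obstacle I anticipate is not the disk comparison itself but the careful topological bookkeeping on the Klein bottle: one must handle one-sided simple closed curves whose tubular neighbourhoods are M\"obius bands, the associated glide-reflection symmetries, and mixed configurations in which $\partial\Omega$ contains both contractible circles and essential geodesics. Ruling these out uniformly, and confirming in each case that the essential part of the boundary contributes length at least $2\ell$ so that the comparison with $2\sqrt{\pi A}$ still favours the disk for $A\le \ell^2/\pi$, is where the quantitative systolic estimates of \cite{HHM} do the real work. Establishing the smoothness of $\partial\Omega$ and the absence of singular triple junctions in the regularity step is the other technical input that I would cite rather than reprove.
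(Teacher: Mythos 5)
The paper offers no proof of this statement at all: it is imported verbatim from Howards--Hutchings--Morgan \cite[Theorem 7, case 1]{HHM}, so there is no internal argument to compare yours against, and your blind attempt is in effect a reconstruction of the HHM proof itself. Your outline follows that proof's standard strategy correctly: existence and regularity of an isoperimetric region (boundary a disjoint union of embedded curves of constant geodesic curvature), the development argument showing that on a flat quotient $\R^2/\Gamma$ a closed curve of nonzero constant curvature must be a full embedded Euclidean circle (no nontrivial translation or glide reflection preserves a circle), the embeddedness of the comparison disk since $r=\sqrt{A/\pi}\le \ell/\pi<\ell/2$, the subadditivity of $\sqrt{\cdot\,}$ to reduce several disks to one, and the crossover inequality $2\sqrt{\pi A}\le 2\ell$ exactly for $A\le \ell^2/\pi$. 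Two points are thinner than they should be, and you are right to flag them as the real work. First, on the Klein bottle the essential case does \emph{not} reduce to ``at least two essential geodesics'': a region can be bounded by a \emph{single} null-homologous essential geodesic, namely the two-sided separating curve bounding a M\"obius band around a one-sided closed geodesic; the needed bound $\ge 2\ell$ in that case comes from the fact that this boundary curve double-covers its one-sided core, whose length is at least the systole $\ell$ --- you mention the configuration but assert rather than derive the length estimate. Second, the regularity input and the exclusion of singular junctions are cited rather than proved, which is legitimate. Since you explicitly delegate precisely these steps to \cite{HHM}, the proposal is sound as a sketch and is, if anything, more detailed than the paper's own treatment, which is pure citation.
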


\begin{corollary}
 Let $\Omega$ be an open domain of $M_1$ with perimeter $\ell (\Omega)$ and area $\cA(\Omega)$. If $\cA(\Omega)<\pi$ then $\ell (\Omega)$ is greater than or equal to the length of a circular disk of the same area:
$$\ell^2(\Omega)\geq 4\pi^2 \cA(\Omega).$$
\end{corollary}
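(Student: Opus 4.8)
The plan is to realize the Möbius strip $M_1$ isometrically inside a \emph{closed} flat surface to which Howards's theorem applies, being careful \emph{not} to pass to a double. Doubling $M_1$ across $\partial M_1$ would replace $\cA(\Omega)$ by $2\cA(\Omega)$ while replacing $\ell(\Omega)$ by \emph{at most} $2\ell(\Omega)$, and would therefore only yield the weaker bound $\ell^2(\Omega)\ge 2\pi\,\cA(\Omega)$. Instead I would glue the two boundary segments $\set{x=0}$ and $\set{x=\pi}$ of the fundamental rectangle $\cR_1$ directly to each other. Concretely, let $\rho$ be the horizontal translation $(x,y)\mapsto(x+\pi,y)$ and set $\Gamma:=\ps{\sigma_1,\rho}$ acting on $\R^2$, where $\sigma_1(x,y)=(\pi-x,y+\pi)$. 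A one-line computation gives the relation $\sigma_1\,\rho\,\sigma_1^{-1}=\rho^{-1}$, so $\Gamma$ is a fixed-point-free Bieberbach group and $K:=\R^2/\Gamma$ is a flat Klein bottle.

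The key point is that $M_1=\big((0,\pi)\times\R\big)/\ps{\sigma_1}$ embeds isometrically into $K$ as an open subset of full measure. Since $\ps{\sigma_1}\le\Gamma$ there is a tautological local isometry $M_1\to K$; and examining the action on the $x$-coordinate of a general element $\sigma_1^a\rho^b$ shows that no element of $\Gamma\sm\ps{\sigma_1}$ carries the open strip $(0,\pi)\times\R$ into itself, so the map is injective. Its complement $K\sm M_1$ is a single closed geodesic (the common image of $\set{x=0}$ and $\set{x=\pi}$), hence of measure zero. I would then record the two metric invariants of $K$: its translation lattice $\ps{\rho,\sigma_1^2}=\pi\Z\times 2\pi\Z$ has covolume $2\pi^2$, so $\cA(K)=\pi^2$; and the shortest translation ($\rho$) and the shortest glide reflection ($\sigma_1$, glide length $\pi$ along its axis $\set{x=\pi/2}$) both have length $\pi$, so the systole of $K$ is $\ell_K=\pi$. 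In particular $\ell_K^2/\pi=\pi$, which is exactly the threshold appearing in the hypothesis $\cA(\Omega)<\pi$.

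With these data the conclusion is immediate. Given an open $\Omega\subset M_1$ with $\cA(\Omega)<\pi$, view it as an open subset of $K$; its area is unchanged, while its perimeter can only decrease, $\ell_K(\Omega)\le\ell(\Omega)$, because the embedding is $1$-Lipschitz and $\partial_K\Omega$ is contained in the image of $\partial\Omega$ (an arc of $\partial\Omega$ lying on $\partial M_1$ either survives in $\partial_K\Omega$ or becomes interior to $\Omega$ in $K$, never creating new boundary). Since $0<\cA(\Omega)<\pi=\ell_K^2/\pi$ and $\pi<\pi^2=\cA(K)$, the cited theorem of Howards--Hutchings--Morgan (Theorem~7, case~1) applies on $K$: the least-perimeter region of area $\cA(\Omega)$ is a round Euclidean disk, whose boundary has length $2\sqrt{\pi\,\cA(\Omega)}$. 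Therefore $\ell(\Omega)\ge\ell_K(\Omega)\ge 2\sqrt{\pi\,\cA(\Omega)}$, i.e.\ $\ell(\Omega)$ is at least the length of a circular disk of the same area, which is the assertion.

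The lattice and systole computations, and the verification that $\Gamma$ is a Bieberbach group, are routine. The genuine obstacle, and the only real idea, is the first one: one must resist doubling and instead exhibit a closed flat surface containing $\Omega$ \emph{isometrically} (same area, same perimeter), so that the full-strength constant $4\pi$ of the planar disk inequality is preserved. The fact that the systole-based threshold $\ell_K^2/\pi=\pi$ of Howards's theorem coincides with the hypothesis $\cA(\Omega)<\pi$ is the confirmation that $K$ is the correct ambient surface. A secondary technical point is the bookkeeping $\ell_K(\Omega)\le\ell(\Omega)$ for boundary-touching $\Omega$, which is what transfers the inequality in the right direction.
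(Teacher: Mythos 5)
Your proof is correct and is essentially the paper's own argument made rigorous: the paper likewise views an open set of $M_1$ as an open set of a flat Klein bottle (``we have just to delete one line $x=\mathrm{const}$ on the Klein bottle''), which is exactly your quotient $\R^2/\ps{\sigma_1,\rho}$ with area $\pi^2$ and systole $\pi$, and then invokes Howards--Hutchings--Morgan, Theorem~7, case~1. One remark: the inequality you obtain, $\ell^2(\Omega)\ge 4\pi\,\cA(\Omega)$, is the one matching the verbal claim ``length of a circular disk of the same area'' (and the constant $\pi j_{0,1}^2$ in the ensuing Faber--Krahn proposition); the constant $4\pi^2$ displayed in the corollary is a misprint.
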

\begin{proof}
We note indeed that an open set in $M_1$ can be considered as an open set of a flat Klein bottle. We have just to delete one line $x ={\rm const}$  on the Klein Bottle.
\end{proof}

\subsection{Faber-Krahn inequality}

\begin{proposition}\label{Faber}
If $\Omega$ is an open connected set of $M_1$  with a piecewise $C^1$ boundary, and such that $\cA(\Omega)\leq \pi$, then:
$$\lambda_1(\Omega)\, \cA(\Omega)\geq \pi j_{0,1}^2\,.$$
\end{proposition}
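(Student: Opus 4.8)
The plan is to run the classical Faber--Krahn argument via Schwarz symmetrization, using the isoperimetric inequality of the preceding Corollary in place of the Euclidean one; this is exactly the scheme of L\'ena \cite{Le} adapted to $M_1$. Let $u \in H^1_0(\Omega)$ be a first Dirichlet eigenfunction, which we may take positive, so that
$$\lambda_1(\Omega) = \frac{\int_\Omega |\nabla u|^2}{\int_\Omega u^2}.$$
For $t \ge 0$ set $\Omega_t := \set{x \in \Omega ~|~ u(x) > t}$ and let $\mu(t) := \cA(\Omega_t)$ be the distribution function; $\mu$ is nonincreasing and $\mu(t) \le \cA(\Omega) \le \pi$ for all $t$. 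Let $\Omega^* \subset \R^2$ be the Euclidean disk with $\cA(\Omega^*) = \cA(\Omega)$, and let $u^*$ be the radially decreasing rearrangement of $u$, that is, the function on $\Omega^*$ with the same distribution function $\mu$.

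First I would record the two standard facts of rearrangement. Equimeasurability gives $\int_{\Omega^*}(u^*)^2 = \int_\Omega u^2$. The core estimate is the P\'olya--Szeg\H{o} inequality $\int_{\Omega^*}|\nabla u^*|^2 \le \int_\Omega |\nabla u|^2$, and this is where the Corollary enters. By Sard's theorem almost every $t$ is a regular value, so $\set{u=t}$ is a finite union of piecewise $C^1$ curves of total length $\ell(\set{u=t})$, equal to the perimeter of $\Omega_t$. The coarea formula yields
$$\int_\Omega |\nabla u|^2 = \int_0^\infty \Big( \int_{\set{u=t}} |\nabla u| \, d\sigma \Big)\, dt, \qquad -\mu'(t) = \int_{\set{u=t}} \frac{d\sigma}{|\nabla u|}.$$
Cauchy--Schwarz on $\set{u=t}$ gives $\ell(\set{u=t})^2 \le \big(\int_{\set{u=t}}|\nabla u|\,d\sigma\big)\,(-\mu'(t))$, and since $\mu(t)\le \pi$ the Corollary applies to $\Omega_t$: its perimeter is at least that of the Euclidean disk of area $\mu(t)$, whence $\ell(\set{u=t})^2 \ge 4\pi\,\mu(t)$. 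Combining,
$$\int_{\set{u=t}}|\nabla u|\,d\sigma \ge \frac{4\pi\,\mu(t)}{-\mu'(t)},$$
and the identical computation on $\Omega^*$, where level sets are concentric circles realizing equality in the isoperimetric inequality, gives $\int_{\Omega^*}|\nabla u^*|^2 = \int_0^\infty \frac{4\pi\,\mu(t)}{-\mu'(t)}\,dt$. Integrating the displayed inequality in $t$ yields the P\'olya--Szeg\H{o} inequality.

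With both facts in hand, the Rayleigh quotient gives
$$\lambda_1(\Omega) = \frac{\int_\Omega|\nabla u|^2}{\int_\Omega u^2} \ge \frac{\int_{\Omega^*}|\nabla u^*|^2}{\int_{\Omega^*}(u^*)^2} \ge \lambda_1(\Omega^*),$$
the last step because $u^* \in H^1_0(\Omega^*)$ is admissible in the Rayleigh quotient of the disk. Writing $\cA(\Omega^*) = \pi R^2$ and using $\lambda_1(\Omega^*) = j_{0,1}^2/R^2$ for the Euclidean disk of radius $R$, we obtain $\lambda_1(\Omega)\,\cA(\Omega) \ge \lambda_1(\Omega^*)\,\cA(\Omega^*) = \pi j_{0,1}^2$, which is the claim.

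The main obstacle is the P\'olya--Szeg\H{o} step, and specifically the verification that the isoperimetric Corollary may legitimately be invoked for \emph{every} superlevel set $\Omega_t$: this rests on the uniform area bound $\mu(t) \le \pi$ (which follows from $\cA(\Omega)\le\pi$) together with enough regularity of the level sets to apply the coarea formula and Cauchy--Schwarz. The limiting value $\mu(t)=\pi$ is attained only on a negligible set of $t$ and is absorbed by continuity, and the non-orientability and non-simple-connectedness of $M_1$ play no role, since symmetrization only sees the area measure and the distribution function.
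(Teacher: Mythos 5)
Your proof is correct and follows essentially the same route as the paper: the authors' proof is a citation to Peetre \cite{Pe} and B\'erard--Meyer \cite{BM} for exactly this classical Schwarz-symmetrization argument, with the Euclidean isoperimetric input replaced by the Corollary derived from Howards--Hutchings--Morgan \cite{HHM}, which is precisely what you have written out (including the correct handling of the strict area condition, since $\mu(t)<\cA(\Omega)\le\pi$ for $t>0$). One small point in your favour: you use the correct isoperimetric constant $\ell^2\ge 4\pi\,\cA$ for the comparison disk, whereas the paper's Corollary misprints it as $4\pi^2\,\cA$.
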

\begin{proof}
The proof is given in \cite{Pe} (with a more detailed proof analyzing more carefully the regularity assumptions given in \cite{BM}). In the two papers additional assumptions are made which are only
 used in the proof of the isoperimetric inequalities. We have replaced them by  Howards's result  \cite{HHM}. Our proof gives also a variant of L\'ena's result on the torus (note that L\'ena gives an alternative proof).
\end{proof}

\subsection{Weyl formula with control of the remainder}

Following the classical proof in Pleijel's foundational paper \cite{Pl},  we need to find an explicit lower bound for the counting function,
\begin{equation*}
N(\lambda)=\#\{k ~|~ \lambda_k < \lambda\},
\end{equation*}
whose main term as $\lambda \rightarrow +\infty$ is given by the classical Weyl formula,
\begin{equation}
N(\lambda) \sim \frac {\pi} 4 \,\lambda \,.
\end{equation}
The coefficient $\frac \pi 4 $ in front of $\lambda$  comes from the computation of $ \frac{1}{4 \pi} \cA(M_1)$ in 2D, where $\cA(M_1)$ is the area of $M_1$. In our case, we have  $\cA(M_1)=\pi^2$.\smallskip

This asymptotics is sufficient for showing that the number of Courant-sharp eigenvalues is finite. To actually determine the Courant-sharp eigenvalues, we need a lower bound for $N(\lambda)$, valid for any $\lambda \geq 9 $. The case of the square was treated in \cite{Pl}. For the M\"{o}bius strip, we prove the following lower bound.

\begin{proposition} The counting function of the Dirichlet eigenvalues of the M\"{o}bius strip $M_1$ satisfies,
\begin{equation}\label{counting}
N(\lambda)\geq\frac {\pi} 4\lambda  \,  -2\sqrt{\lambda}+ 1.
\end{equation}
\end{proposition}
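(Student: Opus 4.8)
The plan is to read $N(\lambda)$ as a constrained lattice-point count and to compare it with the area integral $\int_0^{\sqrt\lambda}\sqrt{\lambda-x^2}\,dx=\frac{\pi}{4}\lambda$.

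First I would make the counting completely explicit. Rather than use the real basis of Lemma~\ref{L-mps-2} directly, it is cleaner to use the complex family $\sin(mx)\exp(iny)$ of $\sigma_1$-invariant Dirichlet functions on $C_1$: the real eigenspace and its complexification have the same dimension, and the $\sigma_1$-invariance forces $m+n$ odd. Hence the dimension of the span of all eigenfunctions with eigenvalue $<\lambda$ is
$$N(\lambda)=\#\{(m,n)\in\Z^2 : m\ge 1,\ m+n \text{ odd},\ m^2+n^2<\lambda\}.$$
This can be checked against Table~\ref{T-moeb-2} (for instance $N(6)=5$, coming from $(1,0)$, $(1,\pm 2)$, $(2,\pm 1)$).

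Second, I would sum over columns. Fix $m$ with $1\le m<\sqrt\lambda$ and put $R_m=\sqrt{\lambda-m^2}$. The admissible $n$ are the integers of parity opposite to $m$ in $(-R_m,R_m)$; writing $n=2k+\delta$ shows these are the integers of a real interval of length exactly $R_m$, so there are at least $\lceil R_m\rceil-1$ of them. Consequently
$$N(\lambda)\ \ge\ \sum_{1\le m<\sqrt\lambda}\bigl(\lceil R_m\rceil-1\bigr)\ =\ \#\{(p,q)\in\Z^2: p,q\ge 1,\ p^2+q^2<\lambda\},$$
the right-hand side being exactly the Dirichlet counting function of the square $(0,\pi)^2$ (its $p=m$ column contains precisely the $q$ with $1\le q<R_m$). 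In particular $N(\lambda)\ge\sum_{1\le m<\sqrt\lambda}(R_m-1)$.

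Third, I would estimate the sum by the integral. Since $x\mapsto\sqrt{\lambda-x^2}$ is decreasing, $\sum R_m\ge\int_1^{\sqrt\lambda}\sqrt{\lambda-x^2}\,dx=\frac{\pi}{4}\lambda-\int_0^1\sqrt{\lambda-x^2}\,dx\ge\frac{\pi}{4}\lambda-\sqrt\lambda$, while $\#\{m:1\le m<\sqrt\lambda\}\le\sqrt\lambda$; this already gives $N(\lambda)\ge\frac{\pi}{4}\lambda-2\sqrt\lambda$. To gain the final $+1$ I would sharpen the comparison using that $\sqrt{\lambda-x^2}$ is \emph{concave}: by Hermite--Hadamard $R_m\ge\int_{m-1/2}^{m+1/2}\sqrt{\lambda-x^2}\,dx$ on the range where the interval stays inside $[0,\sqrt\lambda]$, which upgrades the estimate to $\sum R_m\ge\frac{\pi}{4}\lambda-\tfrac12\sqrt\lambda-O(\lambda^{1/4})$ and hence $N(\lambda)\ge\frac{\pi}{4}\lambda-\tfrac32\sqrt\lambda-O(\lambda^{1/4})$.

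The main obstacle is exactly this quantitative last step: the crude monotone comparison is honestly off by the decisive unit, stopping at $-2\sqrt\lambda$, so one must either invoke the concavity refinement (with explicit control of the single midpoint interval that protrudes past $\sqrt\lambda$) or cite Pleijel's sharper lower bound for the square count \cite{Pl}. Once a refined estimate of the form $-\tfrac32\sqrt\lambda-O(\lambda^{1/4})$ is secured, it dominates $-2\sqrt\lambda+1$ for all $\lambda$ beyond an explicit threshold $\Lambda_0$, and the remaining values $9\le\lambda\le\Lambda_0$ reduce to a finite check (the inequality need only be verified just above each eigenvalue listed in Table~\ref{T-moeb-2}).
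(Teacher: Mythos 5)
Your lattice-count setup is correct: $N(\lambda)=\#\{(m,n)\in\Z^2 : m\ge 1,\ m+n \text{ odd},\ m^2+n^2<\lambda\}$ is exactly the count underlying the paper's proof, and your column-by-column observation --- each column $m$ contains at least $\lceil R_m\rceil-1$ admissible $n$, whence $N(\lambda)\ge N_{\mathrm{sq}}(\lambda):=\#\{(p,q): p,q\ge 1,\ p^2+q^2<\lambda\}$, the Dirichlet count of the square --- is a correct and genuinely different reduction from anything in the paper. The weak point is how you finish. The monotone integral comparison stops at $\frac{\pi}{4}\lambda-2\sqrt{\lambda}$, as you admit, and the Hermite--Hadamard repair is not actually carried out: the protruding last interval costs a term of size $\lambda^{1/4}$ with constant about $\frac{3\sqrt{3}}{2}$, so the inequality $\frac{\pi}{4}\lambda-\frac{3}{2}\sqrt{\lambda}-\frac{3\sqrt{3}}{2}\lambda^{1/4}\ge \frac{\pi}{4}\lambda-2\sqrt{\lambda}+1$ only kicks in for $\lambda$ of order $10^3$; the ``finite check'' would then have to cover all eigenvalues up to roughly $950$, far beyond Table~\ref{T-moeb-2} (which stops at $65$), and you neither compute the threshold nor perform the check. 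As written, that branch is a genuine gap.

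Your second branch, however, closes the argument cleanly, and it is worth seeing that it needs no asymptotics at all: $N_{\mathrm{sq}}(\lambda)\ge\frac{\pi}{4}\lambda-2\sqrt{\lambda}+1$ follows from covering the quarter-disk $D_\lambda$ by the two unit strips $[0,1]\times[0,\sqrt{\lambda}]$ and $[0,\sqrt{\lambda}]\times[0,1]$ (total area $2\sqrt{\lambda}-1$, since they overlap in the unit square) together with the unit squares $[p,p+1]\times[q,q+1]$ attached to the counted points $(p,q)$; this is essentially Pleijel's bound \cite{Pl}. Combined with your reduction, this proves \eqref{counting} for every $\lambda$, with the $+1$. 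The paper does the same covering directly in the M\"{o}bius lattice, skipping the reduction: it attaches $1\times 2$ rectangles $[m,m+1]\times[n,n+2]$ to points with $n>0$ (the area $2$ encoding the multiplicity of $\pm n$) and unit squares to $(m,0)$, again with the two strips of total area $2\sqrt{\lambda}-1$. The two routes are morally the same geometry; yours factors through the square and buys a tidy comparison statement ($N(\lambda)\ge N_{\mathrm{sq}}(\lambda)$), while the paper's is self-contained. So: keep your reduction, finish with the covering bound made explicit (the $+1$ is precisely what the quadratic criterion in Section~\ref{S-upper} needs), and drop the concavity route.
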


\begin{proof}

Let $D_\lambda$ be the part of the closed disk of radius $\sqrt{\lambda}$ in the first quadrant.

In view of Lemma~\ref{L-mps-2} and \eqref{E-mps-6}, to each pair $(m,n)\in D_\lambda$, such that $m+n$ odd and $m\neq 0$, we associate a rectangle or a square, as follows:

\begin{itemize}
\item If $n>0$, we associate the rectangle $ R_{m,n}=[m,m+1]\times[n,n+2]$.
\item If   $n = 0$, (hence $m$ odd positive), we associate the square $R_{m,0}=[m,m+1]\times[1,2]$.
\end{itemize}

\begin{figure}
\centering
\includegraphics[width=0.5\linewidth]{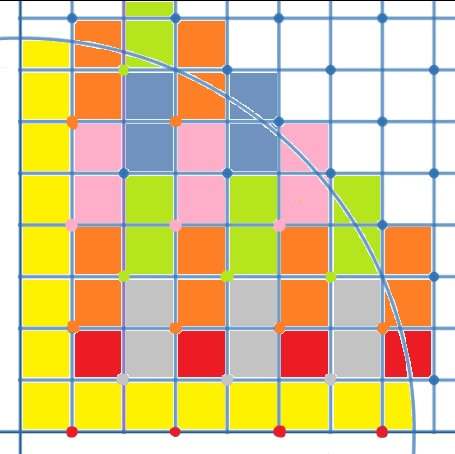}
\caption{The square $R_{m,0}$ (red) and the rectangles $R_{m,n}$ (color depending on $n$)}
\end{figure}%

The following lemma will be useful to continue the proof.

\begin{lemma}
Let $I_\lambda=\{(m,n)\in D_\lambda; m+n \ odd \text{ and } m\neq 0\}$. Then,
\begin{equation}\label{lemmecompte}
\displaystyle \cA(D_\lambda)  \leq \sum_{(m,n)\in I_\lambda} \cA(R_{m,n})  + 2\sqrt{\lambda}-1
\end{equation}

\end{lemma}
\begin{proof}
To prove this lemma, it is enough to prove that,
\begin{equation*}
D_\lambda\subset\ [0,1]\times[0,\sqrt{\lambda}]\  \cup\  [0,\sqrt{\lambda}]\times[0,1] \cup\ \bigcup_{(m,n)\in I_\lambda} R_{m,n}\,.
\end{equation*}

To see that, let us prove that for any number $(x,y)\in D_\lambda$, such that $x>1$ and $y>1$, there is a pair $(m,n)\in I_\lambda$  such that $(x,y) \in R_{m,n}$.
Consider the integer $m$ such that $m\leq x \leq m+1$. Two cases are possible here:

\begin{itemize}
\item If $m$ is odd, let $n$ be the biggest even number less than or equal to $y$: $(m,n)$ is then in $I_\lambda$ and $(x,y)\in R_{m,n}$.

\item If $m$ is even, let $n$ be  the biggest odd number less than or equal to $y$: $(m,n)$ is then in $I_\lambda$ and $(x,y)\in R_{m,n}$.
\end{itemize}

For $x \le 1$ (resp. $y \le 1$), one can easily see that $(x,y)$ belongs to the vertical strip $[0,1]\times[0,\sqrt{\lambda}]$ (resp. the horizontal strip $[0,\sqrt{\lambda}]\times[0,1]$). This ends  the proof of \eqref{lemmecompte}.
\end{proof}

To finish the proof of Proposition \ref{counting}, it is enough to consider the multiplicity of the eigenspace associated to each $\lambdah(m,n)$, one can indeed write,
\begin{equation*}
N(\lambda)=\sum_{(m,n)\in D_\lambda, m\neq 0} \cA( R_{m,n}),
\end{equation*}
which is equivalent to \eqref{counting}.
\end{proof}

\section{Upper bound for the Courant-sharp eigenvalues}\label{S-upper}

This section is inspired by the paper of C. L\'ena on the torus \cite{Le}.

\begin{theorem}\label{127}
For $k\notin \{1,2,7\}$, the eigenvalues $\lambda_k$ of $M_1$ are not Courant-sharp.
\end{theorem}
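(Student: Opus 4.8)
The plan is to combine the two main analytic tools developed in the preceding sections---the explicit lower bound on the counting function $N(\lambda)$ (Proposition~\ref{counting}), and the Faber-Krahn inequality (Proposition~\ref{Faber})---to extract a numerical upper bound on the label of any Courant-sharp eigenvalue, and then to eliminate the remaining finitely many candidates by hand using the spectral data in Table~\ref{T-moeb-2} together with Remark~\ref{R-int-2}. First I would recall the standard Pleijel-type argument: suppose $\lambda_k$ is Courant-sharp, so there is an eigenfunction $u$ with exactly $k$ nodal domains $\Omega_1,\dots,\Omega_k$, and $\sum_j \cA(\Omega_j) = \cA(M_1) = \pi^2$. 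Each nodal domain satisfies $\lambda_1(\Omega_j) = \lambda_k$. For those $\Omega_j$ with $\cA(\Omega_j) \le \pi$, Proposition~\ref{Faber} gives $\lambda_k\, \cA(\Omega_j) \ge \pi j_{0,1}^2$, i.e. $\cA(\Omega_j) \ge \pi j_{0,1}^2/\lambda_k$.

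The next step is to handle the domains with $\cA(\Omega_j) > \pi$, of which there can be at most one (since the total area is $\pi^2$, at most $\pi$ such domains could exist, but more carefully at most $\lfloor \pi \rfloor = 3$; one must bound this crudely). Summing the Faber-Krahn bound over the ``small'' nodal domains and adding the total area of the ``large'' ones, I would obtain an inequality of the shape $\pi^2 \ge (k - c)\,\pi j_{0,1}^2/\lambda_k$, where $c$ is the number of large domains. Rearranging yields a lower bound $\lambda_k \ge (k-c)\, j_{0,1}^2/\pi$. On the other hand, Courant-sharpness forces $k \le N(\lambda_k) + \mtp(\lambda_k)$, and the lower bound \eqref{counting}, namely $N(\lambda) \ge \frac{\pi}{4}\lambda - 2\sqrt{\lambda} + 1$, gives $k \gtrsim \frac{\pi}{4}\lambda_k$. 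Feeding the counting bound into the Faber-Krahn estimate produces a self-contained inequality in $\lambda_k$ alone, of the form $\lambda_k \ge \frac{j_{0,1}^2}{4}\lambda_k - (\text{lower order in }\sqrt{\lambda_k})$; since $j_{0,1}^2/4 \approx 1.446 > 1$, this inequality fails for all $\lambda$ larger than an explicit threshold. Solving the resulting quadratic in $\sqrt{\lambda_k}$ yields a concrete bound, and from Table~\ref{T-moeb-2} this confines Courant-sharp labels to a short initial list (as announced, to $\{\lambda_1,\lambda_2,\lambda_6,\lambda_7\}$ after also invoking Remark~\ref{R-int-2}).

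Having reduced to finitely many candidates, I would finish by eliminating each one. By Remark~\ref{R-int-2} and the multiplicities in Table~\ref{T-moeb-2}, the eigenvalues $\lambda_3,\lambda_4,\lambda_5$ (part of the multiplicity-$4$ value $\lambdah(1,2)$) and $\lambda_8,\lambda_9,\lambda_{10}$ (part of $\lambdah(2,3)$) cannot be Courant-sharp. The simple eigenvalue $\lambda_6 = \lambdah(3,0)$ has eigenfunction $\sin(3x)$ with only $2$ nodal domains, so it is not Courant-sharp. This leaves $\lambda_7$, which is dispatched by Lemma~\ref{L-es23-2}: every eigenfunction in $\cE_{\lambda_7} = \cE_{[2,3]}$ has at most six nodal domains, hence fewer than seven. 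Thus only $\lambda_1$ and $\lambda_2$ survive, both Courant-sharp by Courant's theorem, and the theorem follows for $k\notin\{1,2,7\}$ as stated (the label $7$ being excluded from the statement precisely because it requires the separate nodal analysis of Lemma~\ref{L-es23-2}).

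The main obstacle I anticipate is the careful bookkeeping in the Faber-Krahn summation step: one must correctly count how many nodal domains can have area exceeding $\pi$ and bound their contribution, since these domains escape the Faber-Krahn control and could in principle weaken the inequality. A clean way around this is to note that the Faber-Krahn inequality, applied in the contrapositive, already forces \emph{every} nodal domain to have area at least $\pi j_{0,1}^2/\lambda_k$ once $\lambda_k$ is large (a domain of area $\ge \pi$ automatically has $\lambda_1 \le$ something small, contradicting $\lambda_1(\Omega_j)=\lambda_k$ for large $\lambda_k$), so that for $\lambda_k$ beyond a threshold \emph{no} large domains occur and the summation is uniform. Pinning down this threshold explicitly, and checking that the resulting numerical bound together with the eigenvalue list indeed stops at $\lambda_7$, is the delicate quantitative part; everything downstream is the routine case-elimination already prepared by the earlier sections.
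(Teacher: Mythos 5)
Your overall architecture coincides with the paper's (Section~\ref{S-upper}): combine the counting bound \eqref{counting} with a Faber--Krahn-type constraint to get a quadratic inequality in $\sqrt{\lambda_k}$, bound the Courant-sharp labels, then eliminate the survivors via Remark~\ref{R-int-2}, the two-domain eigenfunction $\sin(3x)$ for $\lambda_6$, and Lemma~\ref{L-es23-2} for $\lambda_7$. But your implementation of the Faber--Krahn step has a genuine gap. You sum Proposition~\ref{Faber} over all ``small'' nodal domains and must then excuse up to $c$ domains of area exceeding $\pi$; since $\cA(M_1)=\pi^2\approx 9.87$, the honest value is $c\le 3$ (your initial claim ``at most one'' is wrong, as you partly noticed). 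The resulting condition $k \le c + \lambda_k\pi/j_{0,1}^2$ with $c=3$ is too weak for the endgame: it fails to eliminate $\lambda_{11}$ (since $11 \le 3 + 9.23$) and $\lambda_{15}$ (since $15 \le 3+13.58$), and neither the paper nor your proposal provides a nodal analysis of those eigenspaces, so the proof cannot close. Moreover, your proposed repair --- that a nodal domain of area $\ge \pi$ would have $\lambda_1$ ``small'', contradicting $\lambda_1(\Omega_j)=\lambda_k$ for $\lambda_k$ large --- is false: there is no reverse Faber--Krahn inequality, and a long thin domain of area $\ge\pi$ inside $M_1$ can have arbitrarily large first Dirichlet eigenvalue, so large-area nodal domains cannot be excluded this way at any threshold.

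The paper avoids all of this bookkeeping with a single pigeonhole (Lemma~\ref{faber-Moebius}): if $\nu(u)=k\ge 4$, the \emph{smallest} nodal domain $D$ has $\cA(D)\le \pi^2/k\le \pi^2/4<\pi$, so Proposition~\ref{Faber} applies to that one domain and yields $\lambda_k=\lambda_1(D)\ge \pi j_{0,1}^2/\cA(D)\ge k\, j_{0,1}^2/\pi$, i.e.\ $k\le \lambda_k\pi/j_{0,1}^2$ with no exceptional constant; this sharper condition (the last column of Table~\ref{T-CSFK}) eliminates every candidate with $k\ge 4$ except $\lambda_7$. A further small slip: Courant-sharpness forces $\lambda_{k-1}<\lambda_k$, hence $N(\lambda_k)= k-1$, and the inequality you actually need is the lower bound $k\ge N(\lambda_k)+1\ge \frac{\pi}{4}\lambda_k-2\sqrt{\lambda_k}+2$, not the upper bound $k\le N(\lambda_k)+\mtp(\lambda_k)$ you wrote (true but unused). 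If you replace your summation step by the pigeonhole lemma, the rest of your argument --- the quadratic in $\sqrt{\lambda_k}$, the finite check, and the case elimination ending with Lemma~\ref{L-es23-2} --- goes through exactly as in the paper.
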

 Before  starting  the proof of the theorem, we need the following two lemmas.
\begin{lemma} \label{faber-Moebius}
If $\lambda$ is an eigenvalue  of the Laplacian on $M_1$ with an associated eigenfunction $u$, and if  the number of nodal domains $\nu(u)$ of $u$ satisfies $\nu(u) \geq 4$, then we have:

\begin{equation}
\frac{ j_{0,1}^2 \nu(u)}{\pi} \leq \lambda
\end{equation}
\end{lemma}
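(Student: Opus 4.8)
The plan is to run the standard Faber--Krahn/Pleijel counting argument on the nodal domains of $u$, with one extra twist needed to handle nodal domains that are too large for Proposition~\ref{Faber} to apply. Let $\Omega_1,\dots,\Omega_{\nu}$ be the nodal domains of $u$, where $\nu=\nu(u)$. Each $\Omega_i$ is open and connected, and since $u$ does not change sign on $\Omega_i$ and vanishes on $\partial\Omega_i$ (either along a nodal line or along the Dirichlet boundary $\partial M_1$), the restriction $u|_{\Omega_i}$ is a first Dirichlet eigenfunction of $\Omega_i$; hence $\lambda_1(\Omega_i)=\lambda$. Because nodal sets of eigenfunctions are unions of real-analytic arcs meeting only at the finitely many (interior or boundary) critical zeros analyzed in Propositions~\ref{P-es23-2}--\ref{P-es23-4}, each $\Omega_i$ has piecewise $C^1$ boundary, so that Proposition~\ref{Faber} is applicable to any $\Omega_i$ with $\cA(\Omega_i)\le\pi$.

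First I would bound the number of ``large'' nodal domains. Since $\cA(M_1)=\pi^2=\sum_i\cA(\Omega_i)$, at most three of the $\Omega_i$ can have area strictly larger than $\pi$ (four such domains would force total area $>4\pi>\pi^2$). As $\nu\ge 4$, at least one nodal domain, say $\Omega_{i_0}$, satisfies $\cA(\Omega_{i_0})\le\pi$. Applying Proposition~\ref{Faber} to $\Omega_{i_0}$ gives $\lambda\,\cA(\Omega_{i_0})\ge\pi\, j_{0,1}^2$, and combined with $\cA(\Omega_{i_0})\le\pi$ this yields $\lambda\ge j_{0,1}^2$. Setting $a:=\pi\, j_{0,1}^2/\lambda$, this inequality says precisely that $a\le\pi$.

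Next I would obtain a uniform lower area bound valid for every nodal domain. If $\cA(\Omega_i)\le\pi$, Proposition~\ref{Faber} gives $\cA(\Omega_i)\ge \pi\, j_{0,1}^2/\lambda=a$; if instead $\cA(\Omega_i)>\pi$, then $\cA(\Omega_i)>\pi\ge a$ as well. Thus $\cA(\Omega_i)\ge a$ for all $i$, and summing over the $\nu$ nodal domains gives
$$\pi^2=\cA(M_1)=\sum_{i=1}^{\nu}\cA(\Omega_i)\ge \nu\,a=\nu\,\frac{\pi\, j_{0,1}^2}{\lambda}\,.$$
Multiplying by $\lambda/\pi^2$ yields $\lambda\ge \nu\, j_{0,1}^2/\pi$, i.e. $\dfrac{j_{0,1}^2\,\nu(u)}{\pi}\le\lambda$, as claimed.

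The only genuine difficulty is that Proposition~\ref{Faber} does not apply to nodal domains of area exceeding $\pi$, so a naive summation is blocked for such domains. The hypothesis $\nu(u)\ge 4$ is exactly what resolves this: it guarantees at least one small nodal domain (since there are at most three large ones), and the presence of one small domain forces $\lambda\ge j_{0,1}^2$, hence $a\le\pi$, which makes the area bound $\cA(\Omega_i)\ge a$ hold automatically on the large domains too. This is the step where the threshold $4$ (rather than any value $\le\pi$) is essential.
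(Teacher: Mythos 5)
Your proof is correct, but it takes a different route from the paper's. The paper's proof is a one-shot pigeonhole argument: since $\nu(u)\ge 4$, the \emph{smallest} nodal domain $D$ has area $\cA(D)\le \pi^2/\nu(u)\le \pi^2/4<\pi$, so Proposition~\ref{Faber} applies to $D$ alone and immediately gives $\lambda=\lambda_1(D)\ge \pi j_{0,1}^2/\cA(D)\ge \nu(u)\, j_{0,1}^2/\pi$; no other nodal domain is ever considered. You instead run the classical Pleijel-style summation: a uniform lower bound $\cA(\Omega_i)\ge a:=\pi j_{0,1}^2/\lambda$ on \emph{every} nodal domain, followed by summing over all $\nu$ domains. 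This forces you to insert an extra step that the paper never needs --- at most three domains can have area exceeding $\pi$, hence one small domain exists, hence $\lambda\ge j_{0,1}^2$ and $a\le\pi$, which makes the bound hold trivially on the large domains --- precisely because Proposition~\ref{Faber} is restricted to sets of area at most $\pi$. Both routes use $\nu(u)\ge 4$ essentially (the paper to get $\pi^2/\nu<\pi$; you to exclude that all domains are large, since three domains of area $>\pi$ are a priori compatible with total area $\pi^2$). Your version is longer but closer in spirit to the summation argument that underlies Pleijel-type asymptotics; the paper's is minimal. One small slip worth fixing: the regularity of nodal-domain boundaries needed to invoke Proposition~\ref{Faber} is a general fact about nodal sets of eigenfunctions (finitely many analytic arcs meeting at finitely many critical zeros), and should not be attributed to Propositions~\ref{P-es23-2}--\ref{P-es23-4}, which concern only the particular eigenspace $\cE_{[2,3]}$.
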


\begin{proof}
If $\nu(u)\geq 4$, observing that the area of $M=M_1$ is $\pi^2$, there exists one nodal domain $D$ associated to $u$
with area $\cA(D)$ less than $\frac{\pi^2}{4}$, hence less than $\pi$.
Applying now  \eqref{Faber} to $D$, we get:
\begin{equation}
\lambda=\lambda_1(D)\geq \frac {\pi j_{0,1}^2}{\cA(D)}\geq \frac {\nu(u) j_{0,1}^2}{\pi}.
\end{equation}
\end{proof}

We now apply this lemma to a Courant-sharp eigenvalue $\lambda_k$.  By assumption, there exists an eigenfunction $u_k$ with $k$ nodal domains. Applying Lemma \ref{faber-Moebius} to $u_k$, we get:

\begin{proposition}
If $\lambda_k$ is a Courant-sharp eigenvalue of the Laplacian  on $M_1$,  with $k\geq 4$, then,
\begin{equation}\label{sharp-faber}
\frac{\lambda_k}{k}\geq \frac{(j_{0,1})^2}{\pi}\,.
\end{equation}
\end{proposition}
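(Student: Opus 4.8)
The plan is to obtain this proposition as an immediate consequence of Lemma~\ref{faber-Moebius}. By the definition of the Courant-sharp property recalled in the introduction, the assertion that $\lambda_k$ is Courant-sharp means precisely that there exists an eigenfunction $u_k$ associated with $\lambda_k$ whose number of nodal domains equals the label, that is $\nu(u_k) = k$. Since we are assuming $k \geq 4$, the hypothesis $\nu(u_k) \geq 4$ required by Lemma~\ref{faber-Moebius} is automatically satisfied.

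First I would apply Lemma~\ref{faber-Moebius} with $u = u_k$ and $\lambda = \lambda_k$. This yields directly
\begin{equation*}
\frac{j_{0,1}^2\, \nu(u_k)}{\pi} \leq \lambda_k\,.
\end{equation*}
Substituting the equality $\nu(u_k) = k$ and then dividing both sides by $k > 0$ gives exactly the desired inequality \eqref{sharp-faber}.

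There is essentially no genuine obstacle at this stage, since the whole analytic content has already been packaged into Lemma~\ref{faber-Moebius}, which in turn rests on the Faber-Krahn inequality of Proposition~\ref{Faber}. The only two points worth flagging are conceptual rather than computational. The first is that one must use the \emph{equality} $\nu(u_k) = k$ supplied by Courant-sharpness, and not merely Courant's upper bound $\nu(u_k) \leq k$; it is this equality that converts the lemma into a clean lower bound on the ratio $\lambda_k/k$. The second is the area bookkeeping underlying Lemma~\ref{faber-Moebius}: because $\cA(M_1) = \pi^2$ and the $k \geq 4$ nodal domains of $u_k$ partition $M_1$, at least one of them has area at most $\pi^2/k \leq \pi^2/4 < \pi$, so that the hypothesis $\cA(\Omega) \leq \pi$ of Proposition~\ref{Faber} is met and the Faber-Krahn estimate can be invoked on that small domain.
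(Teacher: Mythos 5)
Your proposal is correct and coincides with the paper's own argument: the proposition is obtained by applying Lemma~\ref{faber-Moebius} to an eigenfunction $u_k$ with exactly $k$ nodal domains (guaranteed by Courant-sharpness) and dividing by $k$. Your additional remarks on using the equality $\nu(u_k)=k$ and on the area estimate $\cA(D)\le \pi^2/4 < \pi$ merely unpack what is already contained in the lemma and its proof, so there is nothing to add.
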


We can now give the proof of the theorem.

 \begin{proof}[Proof of Theorem \ref{127}]

Assume that $\lambda_k$ is Courant-sharp. By Remark~\ref{R-int-2}, $N(\lambda_k) \le k-1$. Using \eqref{counting} and \eqref{sharp-faber}, we conclude that $P(\sqrt{\lambda _k}) \ge 0$, where
 \begin{equation}\label{sharp-quadratic}
P(x):= (\frac\pi {j_{0,1}^2} -\frac\pi 4) x^2 +2 x -2\,.
 \end{equation}

\begin{table}[!htb]
\centering
\resizebox{0.5\textwidth}{!}{%
\begin{tabular}[c]{|c|c|c|c|}%
\hline&&&\\[2pt]
Eigenvalue & $(m,n)$ & $\lambda_k$ & $\frac{\lambda_k\, \pi}{(j_{0,1})^2}$\\[7pt]
\hline
$\lambda_1$ & $(1,0)$ & $1$ & -- \\[5pt] 
\hline
$\lambda_2$ & $(1,2), (2,1)$ & $5$ & --\\[5pt] 
\hline
$\lambda_6$ & $(3,0)$ & $9$  & $4.8891$\\[5pt]
\hline
$\lambda_7$ & $(2,3), (3,2)$ & $13$ & $7.0620$\\[5pt]
 \hline
$\lambda_{11}$ & $(1,4), (4,1)$ & $17$ & $9.2349$\\[5pt]
\hline
$\lambda_{15}$ & $(3,4), (4,3), (5,0)$ & $25$ & $13.5807$\\[5pt]
\hline
$\lambda_{20}$ & $(2,5), (5,2)$ & $29$  & $15.7536$\\[5pt]
\hline
$\lambda_{24}$ & $(1,6), (6,1)$ & $37$  & $20.0995$\\[5pt]
\hline
$\lambda_{28}$ & $(4,5), (5,4)$ & $41$ & $22.2724$\\[5pt]
\hline
$\lambda_{32}$ & $(3,6), (6,3)$ & $45$ & $24.4453$\\[5pt]
\hline
$\lambda_{36}$ & $(7,0)$ & $49$ & $26.6182$\\[5pt]
\hline
$\lambda_{37}$ & $(2,7), (7,2)$ & $53$ & $28.7911$\\[5pt]
\hline
$\lambda_{41}$ & $(5,6), (6,5)$ & $61$ & $33.1370$\\[5pt]
\hline
$\lambda_{45}$ & $(1,8), (8,1), (4,7), (7,4)$ & $65$ & $35.3099$\\[5pt]
\hline
\end{tabular}}
\medskip
\caption{Courant-sharp vs Faber-Krahn}\label{T-CSFK}
\end{table}

Since the quadratic function $P$ is negative for $x \ge 8$, Courant-sharp Dirichlet eigenvalues of $M_1$ must be smaller that $64$. The last step is to check the eigenvalues smaller than $64$. Table~\ref{T-CSFK} lists the eigenvalues $\lambda_k$ less than or equal to $65$ which could be Courant-sharp, taking Remark~\ref{R-int-2} into account, together with the corresponding ratio $\frac{\lambda_k\, \pi}{(j_{0,1})^2}$ (which only makes sense for $k\ge 4$). The only eigenvalue which satisfies the Faber-Krahn condition $k \le \frac{\lambda_k\, \pi}{(j_{0,1})^2}$ is $\lambda_7$.\medskip

According to Lemma~\ref{L-es23-2}, the eigenvalue $\lambda_7$  is not Courant-sharp.

\end{proof}

Taking Proposition~\ref{P-esmn-4} into account, the proof of Theorem~\ref{main} is complete.

\FloatBarrier

\section{ An Euler-type formula for the M\"{o}bius band}\label{S-eulerm}

The M\"{o}bius band is a non-orientable surface with boundary, with Euler characteristic $0$, and one boundary component. Otherwise stated, it is a real projective plane with one disk removed. A natural question is whether the non-orientability character can be detected in the nodal patterns. The purpose of this section is to give a positive answer.\medskip

Let $\Omega$ be a bounded connected open set in $\mathbb R^2$, with piecewise $C^{1,+}$ boundary. The following theorem appears in \cite{HOMiNa}.

\begin{proposition}\label{P-euler-2}
Let $\mathcal D = \set{D_i}_{i=1}^k$ be a nodal $k$-partition of $\Omega$, where the $D_i$'s are the $k$ nodal domains of some Dirichlet eigenfunction $\Phi$ in $\Omega$, and $\partial \cD = \cZ(\Phi)$. Let $b_0$ denote the number of connected components of $\partial \Omega$, and $b_1$ the number of connected components of $\partial \mathcal D \cup\partial \Omega$. Let $\cC_i(\partial \cD)$ denote the set of interior critical zeros of $\Phi$, and $\cC_b(\partial \cD)$ the set of boundary critical zeros. Given ${\bf x} \in \cC_i(\partial \cD)$, let $\nu({\bf x})$ denote the number of nodal semi-arcs at ${\bf x}$; given ${\bf y} \in \cC_b(\partial \mathcal D)$, let $\rho({\bf y})$ denote the number of nodal semi-arcs hitting the boundary at ${\bf y}$. Then,
\begin{equation}\label{E-euler-2}
k= 1 + b_1-b_0+ \frac{1}{2}\, \sum_{{\bf x}\in \cC_i(\partial \mathcal D )}
\left( \nu({\bf x}) - 2\right) + \frac{1}{2} \, \sum_{{\bf y}\in \cC_b(\partial \mathcal D )}\rho({\bf y})\,.
\end{equation}
\end{proposition}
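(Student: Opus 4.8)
The plan is to regard the net $N:=\partial\cD\cup\partial\Omega$ (which for a Dirichlet eigenfunction equals $\cZ(\Phi)$, since $\partial\Omega\subset\cZ(\Phi)$) as a finite graph embedded in the compact surface $\overline{\Omega}$, and to extract $k$ by comparing two computations of Euler characteristics. The first ingredient is the classical local structure theorem for nodal sets of eigenfunctions: away from the critical zeros, $\cZ(\Phi)$ is a regular $1$-manifold; near an interior critical zero $\mathbf{x}$ of order $\ell$ (in the sense of Definition~\ref{D-esmn-2}) it is a union of $\ell$ equiangular arcs, so that $\nu(\mathbf{x})=2\ell$ semi-arcs emanate from $\mathbf{x}$; and near a boundary critical zero $\mathbf{y}$ exactly $\rho(\mathbf{y})$ nodal semi-arcs meet $\partial\Omega$. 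This guarantees that $N$ carries the structure of a finite graph whose open $2$-dimensional complementary faces are precisely the nodal domains $D_1,\dots,D_k$.

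Next I would compute $\chi(N)$ by the handshake lemma. Take as vertices all critical zeros, together with one auxiliary regular vertex on each boundary circle or interior nodal loop that carries no critical zero. The degree of an interior critical zero $\mathbf{x}$ is $\nu(\mathbf{x})$; the degree of a boundary critical zero $\mathbf{y}$ is $\rho(\mathbf{y})+2$ (two boundary arcs plus $\rho(\mathbf{y})$ nodal semi-arcs); each auxiliary vertex has degree $2$. Writing $C_i:=|\cC_i(\partial\cD)|$, the auxiliary vertices and the count $|\cC_b(\partial\cD)|$ cancel, leaving
\begin{equation*}
\chi(N)=V-E=C_i-\tfrac12\sum_{\mathbf{x}\in\cC_i(\partial\cD)}\nu(\mathbf{x})-\tfrac12\sum_{\mathbf{y}\in\cC_b(\partial\cD)}\rho(\mathbf{y})=-\tfrac12\sum_{\cC_i(\partial\cD)}\bigl(\nu(\mathbf{x})-2\bigr)-\tfrac12\sum_{\cC_b(\partial\cD)}\rho(\mathbf{y}).
\end{equation*}
On the other hand, additivity of the Euler characteristic for the decomposition of $\overline{\Omega}$ into the closed graph $N$ and the open faces $D_i$ gives $\chi(\overline{\Omega})=\chi(N)+\sum_{i=1}^{k}\chi(D_i)$, and since $\Omega$ is a planar domain with $b_0$ boundary components, $\chi(\overline{\Omega})=2-b_0$.

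Combining these reduces the theorem to the purely topological identity
\begin{equation*}
\sum_{i=1}^{k}\bigl(1-\chi(D_i)\bigr)=b_1-1,
\end{equation*}
that is, the total number of "holes" of the nodal domains equals $b_1-1$. I expect this identity to be the main obstacle, and it is exactly the step where planarity (genus zero) is indispensable. I would prove it by induction on $b_1$, the number of connected components of $N$: when $b_1=1$ the net is connected, so every complementary face in $S^2$ is an open disk and $\sum_i(1-\chi(D_i))=0$; adding a further connected component, which necessarily lies inside a single face, increases the total hole count by exactly one. Substituting $\sum_i\chi(D_i)=k-(b_1-1)$ and $\chi(\overline{\Omega})=2-b_0$ into the additivity relation yields $k=2-b_0-\chi(N)+b_1-1$, and inserting the handshake value of $\chi(N)$ produces precisely
\begin{equation*}
k=1+b_1-b_0+\tfrac12\sum_{\cC_i(\partial\cD)}\bigl(\nu(\mathbf{x})-2\bigr)+\tfrac12\sum_{\cC_b(\partial\cD)}\rho(\mathbf{y}),
\end{equation*}
as claimed in \cite{HOMiNa}. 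It is exactly this genus-zero input that must be altered for a non-orientable surface such as the M\"{o}bius band, which is why the non-orientability is expected to surface later as a correction term.
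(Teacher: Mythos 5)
You should first note that the paper contains no proof of Proposition~\ref{P-euler-2} to compare against: it is quoted verbatim from \cite{HOMiNa} (and the M\"{o}bius variant, Property~\ref{P-eulerm-2}, is likewise delegated to \cite{BH-em}). Judged on its own, your argument is correct in outline and lies in the same circle of ideas as the cited proofs, which also run on Euler's formula for the nodal graph in the sphere. Your bookkeeping checks out: with auxiliary degree-$2$ vertices on critical-zero-free circles, the handshake count gives $\chi(N)=V-E=C_i-\frac12\sum\nu(\mathbf{x})-\frac12\sum\rho(\mathbf{y})$ (the $|\cC_b|$ and auxiliary contributions cancel as you say), and combined with $\chi(\overline{\Omega})=2-b_0$ and $\sum_i\chi(D_i)=k-(b_1-1)$ this is exactly \eqref{E-euler-2}. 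Your closing observation that the genus-zero input is precisely what must be modified on a non-orientable surface is also on target: it is where the correction term $\omega(\cD)$ of Property~\ref{P-eulerm-2} comes from.

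Two steps are asserted rather than proved and deserve care. (i) The additivity $\chi(\overline{\Omega})=\chi(N)+\sum_i\chi(D_i)$ is not a naive face count, since the $D_i$ need not be disks; it holds because the compactly supported Euler characteristic is additive over the decomposition and coincides with $\chi$ for each $D_i$ (even-dimensional manifolds of finite topology), or via a CW structure on $\overline{\Omega}$ compatible with $N$ — and either route requires knowing that $N$ is a \emph{finite} graph, i.e.\ finitely many critical zeros and arcs, which is where analyticity in the interior and the $C^{1,+}$ boundary hypothesis enter; you invoke the local structure theorem but not this finiteness. (ii) In your induction, the claim that adding a connected component $N'$ inside a face $F$ raises the hole count by exactly one amounts, by your own additivity relation, to showing that the number $m$ of components of $F\setminus N'$ equals $2-\chi(N')$, i.e.\ that every face $G$ of $N'$ in $\mathbb{S}^2$ meets $F$ in a nonempty \emph{connected} set. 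Nonemptiness is immediate; connectedness is a genuinely planar fact (e.g.\ via unicoherence of $\mathbb{S}^2$: complementary components of the connected open set $F$ do not separate the sphere, and removing finitely many such non-separating continua from the open disk $G$ leaves it connected). You correctly flagged this as the crux, but as written it is claimed, not proved. A streamlining worth knowing: your hole-count identity is equivalent to Euler's formula for a possibly disconnected graph in $\mathbb{S}^2$, $V-E+F=1+c$; applying it to $N$ with $c=b_1$ and $F=k+b_0$ faces gives $k=1+b_1-b_0-\chi(N)$ in one line, absorbing the induction into a standard statement.
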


In the case of the M\"{o}bius strip, we expect the following Euler-type formula to hold. It takes non-orientability into account.

\begin{property}\label{P-eulerm-2}
Let $\cD$ be nodal $k$-partition of the M\"{o}bius strip $M_1$. With the previous notation, the following relation holds,
\begin{equation}\label{E-eulerm-2}
k = \omega(\cD)  + b_1-b_0 + \frac{1}{2}\, \sum_{{\bf x}\in \cC_i(\partial \mathcal D )} \left(\nu({\bf x}) - 2 \right) + \frac{1}{2} \, \sum_{{\bf y}\in \cC_b(\partial \mathcal D )}\rho({\bf y})\,,
\end{equation}
where $\omega(\cD) = 0$ if all the nodal domains are orientable, and $\omega(\cD) = 1$ if one nodal domain in non-orientable. In the latter case, it turns out that there is exactly one non-orientable nodal domain.
\end{property}

\pf We refer to \cite{BH-em} for the proof of Property~\ref{P-eulerm-2} in the more general framework of partitions. \hfill \qed

\begin{remark}\label{R-eulerm-2} The Euler-type formula \eqref{E-euler-2} actually holds for more general partitions, \cite{BH0, HOMiNa}.
\end{remark}

One can easily check that Property~\ref{P-eulerm-2} is true for the nodal domains which appear in Section~\ref{S-es23}, with $\omega(\cD) = 0$ for all nodal patterns, except for the nodal pattern in Figure~\ref{F-es23-gc}\,(C) for which $\omega(\cD)=1$, note that the nodal domain labeled ``2'' is homeomorphic to a M\"{o}bius strip. \medskip

Formula~\eqref{E-eulerm-2} can also be verified on the nodal patterns displayed in the following figures. Figure~\ref{F-fnp-sin} displays nodal patterns containing one M\"{o}bius strip, and one or two non-simply connected, orientable nodal domains. Figure~\ref{F-fnp-moeb} displays the nodal patterns of the function
$$
\cos\theta \sin(x)\cos(my) + \sin\theta \sin(mx)\cos(y)\,, \text{~with~} m=6,
$$
for the values $\theta=0.18\,\pi$ and $\theta=0.4\,\pi$. In both cases, one of the nodal domains is non-orientable, homeomorphic to a M\"{o}bius strip (left), or to a M\"{o}bius with holes (right).\medskip

\begin{figure}[h]
\centering
\begin{subfigure}[t]{.45\textwidth}
\centering
\includegraphics[width=\linewidth]{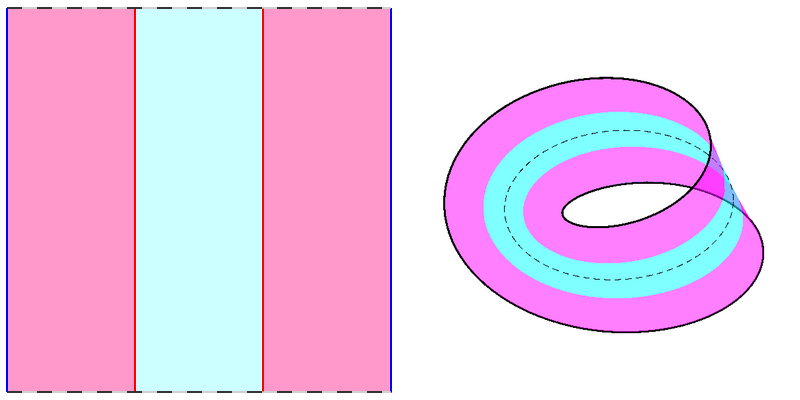}
\end{subfigure}
\hspace{3mm}
\begin{subfigure}[t]{.45\textwidth}
\centering
\includegraphics[width=\linewidth]{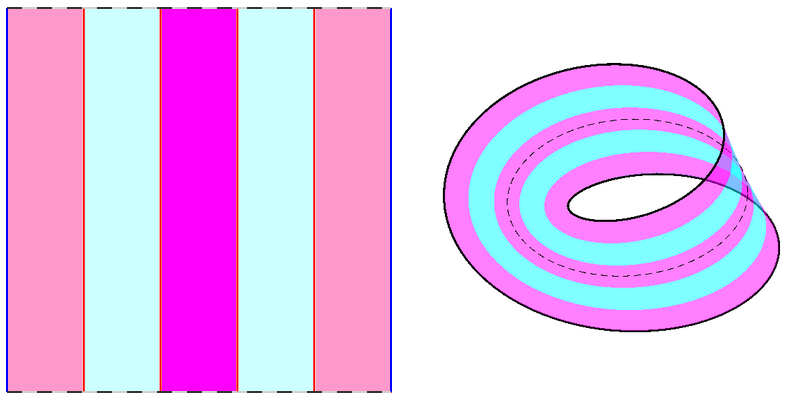}
\end{subfigure}
\caption{Nodal sets of $\sin(3x)$ and $\sin(5x)$}\label{F-fnp-sin}
\end{figure}

\begin{figure}[h]
\centering
\begin{subfigure}[t]{.35\textwidth}
\centering
\includegraphics[width=\linewidth]{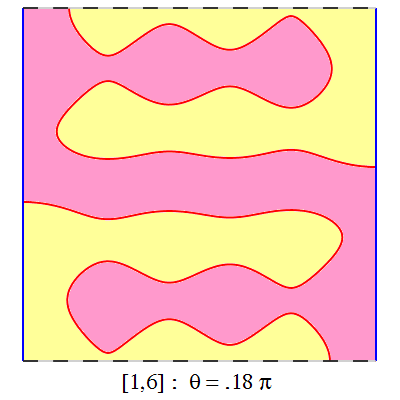}
\end{subfigure}
\begin{subfigure}[t]{.35\textwidth}
\centering
\includegraphics[width=\linewidth]{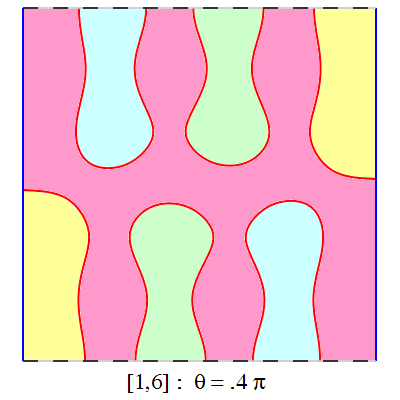}
\end{subfigure}
\caption{Nodal sets with non-orientable nodal domains}\label{F-fnp-moeb}
\end{figure}

\FloatBarrier

\section{High energy eigenfunctions with two nodal domains}\label{S-stern}

From \eqref{E-mps-6}, we conclude that some of the Dirichlet eigenfunctions of the square $(0,\pi)^2$ are also Dirichlet eigenfunctions of the M\"{o}bius strip. This is in particular the case of the eigenfunctions $\sin(x)\sin(2ry)$ and $\sin(2rx)\sin(y)$, where $r$ is a positive integer. According to a result of A.~Stern, for $\varepsilon > 0$ small enough (depending on $r$), the eigenfunction $\sin(x)\sin(2ry) + (1+\varepsilon)\, \sin(2rx)\sin(y)$ has precisely two nodal domains. This is illustrated in Figure~\ref{F-stern-14} and \ref{F-stern-16}, we refer to \cite{BH} for detailed proofs. Other examples involve linear combinations of $\sin(x)\cos(2ry)$ and $\sin(2ry)\cos(y)$, see for example Figure~\ref{F-fnp-moeb} (left).\medskip

\begin{figure}[!hbp]
\centering
\includegraphics[width=0.8\linewidth]{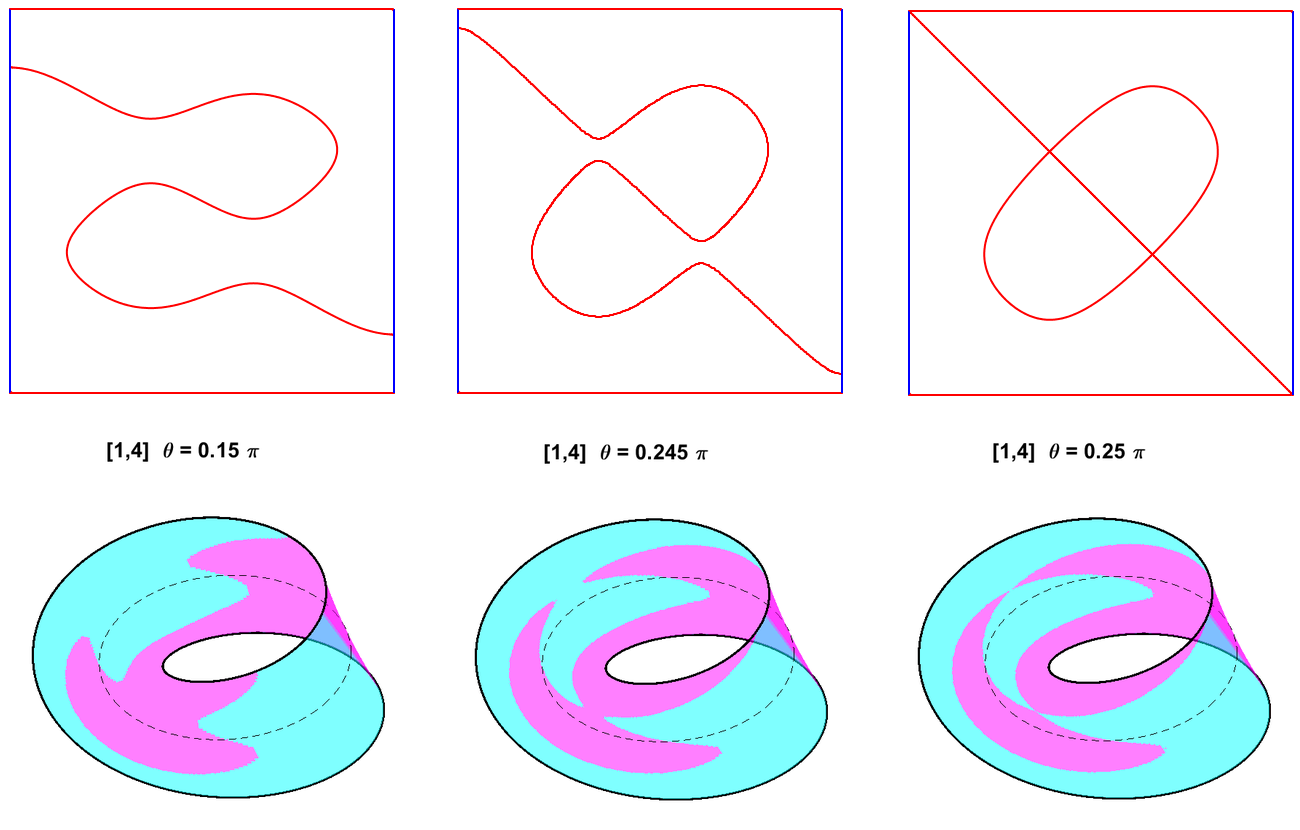}\\
\caption{Example \`{a} la Stern $[1,4]$}\label{F-stern-14}
\end{figure}

\begin{figure}[ht]
  \centering
  \includegraphics[width=0.8\linewidth]{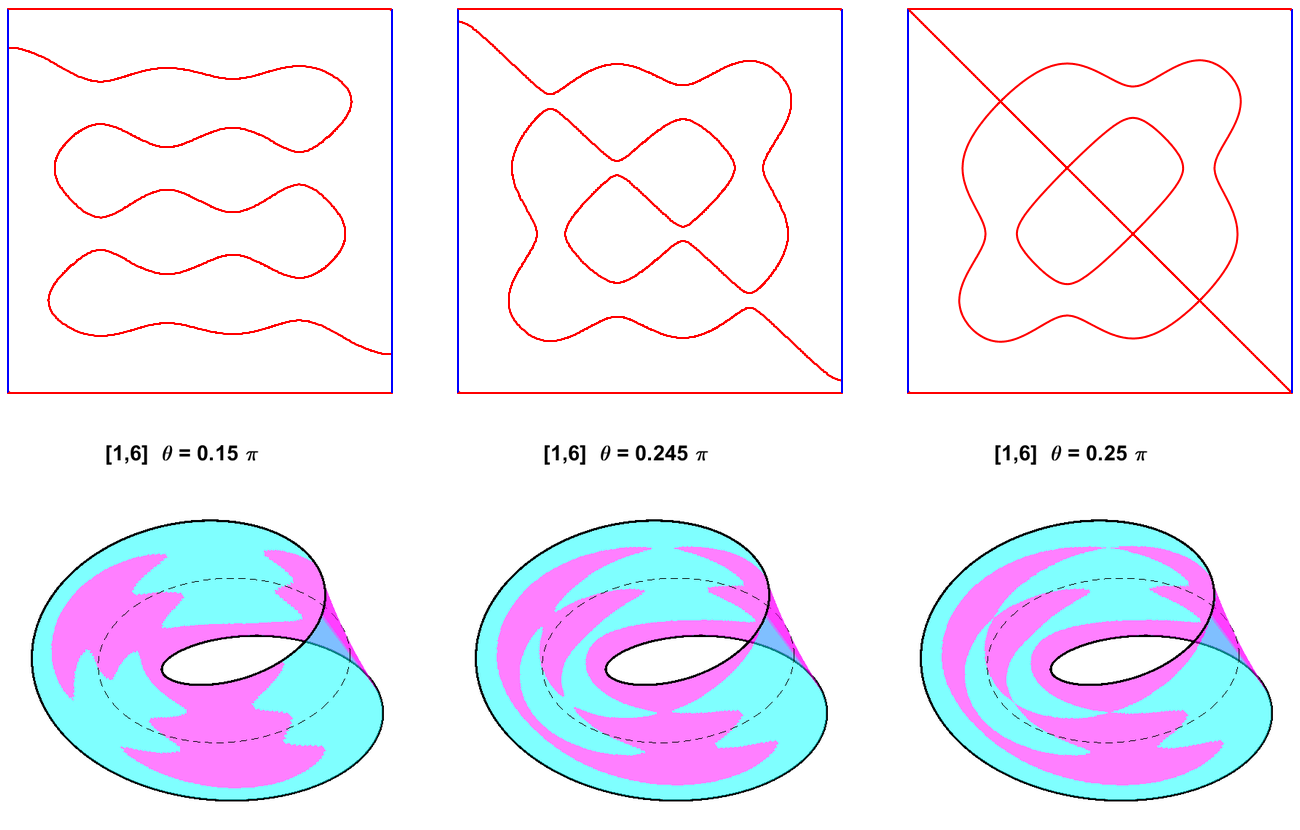}\\
  \caption{Example \`{a} la Stern $[1,6]$}\label{F-stern-16}
\end{figure}

\vspace{1cm}
\bibliographystyle{plain}

\end{document}